\pdfoutput=1
\documentclass[11pt,letterpaper]{amsart} 

\usepackage{comment}
\usepackage{ifluatex}
\usepackage[english]{babel} 

\usepackage{amscd,amsmath,amssymb,amsthm} 
\usepackage{array}                        
\usepackage{multirow}                     

\ifluatex
  \usepackage{fontspec}
\else
  \usepackage[utf8]{inputenc} 
  \usepackage{lmodern}
  \usepackage[T1]{fontenc}
\fi

\usepackage[margin=0.9in]{geometry}    

\usepackage{mathtools}
\usepackage{bm} 
\usepackage{dutchcal} 

\usepackage{slashed} 
\usepackage[babel=true]{microtype}
\usepackage[autostyle=true]{csquotes} 
\usepackage[dvipsnames]{xcolor}
\usepackage[biblatex=true]{embrac}

\usepackage[%
bookmarks=true,
colorlinks,
linkcolor=blue,
urlcolor=blue,
citecolor=blue,
plainpages=false,
pdfpagelabels,
final,
breaklinks=true,
pdfusetitle,
]{hyperref}
\usepackage[citestyle=numeric-comp,
            bibstyle=numeric-comp,
            backend=biber,
            url=false,
            doi=true,
            isbn=false,
            giveninits=true,
            maxbibnames=100,
            sorting=nyt]{biblatex}

\AtEveryBibitem{\clearfield{month}}
\AtEveryBibitem{\clearfield{day}}
\AtEveryBibitem{%
  \ifentrytype{thesis}
    {}
    {
      \ifentrytype{online}{}
      {
      \clearfield{url}%
      \clearfield{urldate}%
      }
    }%
}
            
\usepackage{tikz}
\usetikzlibrary{cd} 

\usepackage[all]{xy}

\usepackage[shortlabels]{enumitem}
\newlist{myenumi}{enumerate}{1}
\setlist[myenumi,1]{label=\upshape(\roman*)}
\newlist{myenuma}{enumerate}{1}
\setlist[myenuma,1]{label=\upshape(\alph*)}
\usepackage{color}
\usepackage[textwidth=0.8in]{todonotes}
\usepackage{thmtools}
\declaretheorem[name=Theorem, numberwithin=section]{theorem}
\declaretheorem[name=Theorem, numbered=no]{theorem*}
\declaretheorem[name=Lemma,numberlike=theorem]{lemma}
\declaretheorem[name=Lemma,numbered=no]{lemma*}
\declaretheorem[name=Corollary,numberlike=theorem]{cor}
\declaretheorem[name=Proposition,numberlike=theorem]{prop}

\declaretheorem[name=Example, numberlike=theorem, style=remark]{example}
\declaretheorem[name=Remark, numberlike=theorem, style=remark]{rem}

\declaretheorem[name=Theorem]{thmx}
\declaretheorem[name=Corollary, numberlike=thmx]{corx}

\numberwithin{equation}{section}
\allowdisplaybreaks[1]
\usepackage[noabbrev]{cleveref} 
\crefname{figure}{Figure}{Figures}
\crefname{table}{Table}{Tables}
\crefname{theorem}{Theorem}{Theorems}
\crefname{thmx}{Theorem}{Theorems}
\crefname{lemma}{Lemma}{Lemmas}
\crefname{definition}{Definition}{Definitions}
\crefname{setup}{Setup}{Setups}
\crefname{conjecture}{Conjecture}{Conjectures}
\crefname{question}{Question}{Questions}
\crefname{cor}{Corollary}{Corollaries}
\crefname{corx}{Corollary}{Corollaries}
\crefname{prop}{Proposition}{Propositions}
\crefname{example}{Example}{Examples}
\crefname{rem}{Remark}{Remarks}
\crefname{section}{Section}{Sections}
\crefname{subsection}{Subsection}{Subsections}
\crefname{chapter}{Chapter}{Chapters}
\crefname{appendix}{Appendix}{Appendices}
\crefdefaultlabelformat{#2\textup{#1}#3}
\creflabelformat{enumi}{(#2#1#3)}

\usepackage{crossreftools}
\usepackage{xparse}

\usepackage{ourmacros}

\newcommand{\diag}{\operatorname{diag}}

\newcommand{\incl}{\mathrm{incl}}

\title{Homotopical Robustness of Isometries on the Cayley Plane}
\subjclass[2020]{55P62, 55Q52  (Primary); 55R37, 57T20 (Secondary)}

\hypersetup{
  pdfauthor={Thorsten Hertl}
  pdfauthor={Isla Lim}
}

\author{Thorsten Hertl}
\address[T.~Hertl]{School of Mathematics and Statistics, The University of Melbourne, Australia }
\email{\href{mailto:thorsten.hertl@unimelb.edu.au}{thorsten.hertl@unimelb.edu.au}}
\urladdr{\href{https://thorsten-hertl.github.io/}{https://thorsten-hertl.github.io/}}

\author{Isla Lim}
\address[I.~Lim]{School of Mathematics and Statistics, The University of Melbourne, Australia }
\email{\href{mailto:yellowfin1728@gmail.com}{yellowfin1728@gmail.com}}

\date{\today}

\begin{document}

\begin{abstract}
  We compute the effect of the inclusion $\Ffour \rightarrow \hAut(\OP^2)$ on rational homotopy groups by determining the rational homotopy class of the isometry action $\Ffour \times \OP^2 \rightarrow \OP^2$ in terms of a homomorphism between the minimal models of source and target. 
  Although special emphasis is put on the Cayley plane, our results generalise to all simply connected rank $1$-symmetric spaces.
\end{abstract}

\maketitle



\section{Introduction}\label{Section - Introduction}

Let $\OP^2$ be the Cayley plane, which is defined to be the subspace of $\mathrm{Herm}(3,\Oct)$ consisting of all hermitian $(3\times 3)$-matrices with entries in the octonions $\Oct$ that are rank one projections.
Embed $\mathrm{Herm}(1,\Oct) = \Oct$ and $\mathrm{Herm}(2,\Oct)$ into the upper left corner $\mathrm{Herm}(3,\Oct)$, that is via the map $A \mapsto \diag(A,0)$, and set $\OP^m = \OP^2 \cap \mathrm{Herm}(m+1,\Oct)$ for $m=0,1$.

The compact Lie group $\Ffour$ acts faithfully and transitively on $\OP^2$ with stabiliser $\mathrm{Stab}(\OP^0) = \Spin(9)$ and $\mathrm{Stab}(\OP^1) = \{g \in \Ffour \, : \, g\cdot x=x\, \text{ for all } x\in \OP^1\}= \Spin(7)$, see \cite{Harvey1990SpinorsAndCalibrations}, especially Theorem 14.79 paired with the definition of the $\Spin(9)$ action in the proof of Theorem 14.99. 
In fact, $\OP^2$ can be equipped with a Riemannian metric (unique up to scaling) that turns it into a symmetric space and $\Ffour$ will then be the path component $\mathrm{Iso}(\OP^2)$ of the identity inside the group of Riemannian isometries, see \cite[p.287]{Wolf2011SpacesConstCurv}.
The left action therefore gives rise to an inclusion
\begin{equation*}
    s \colon \Ffour = \mathrm{Iso}(\OP^2)_0 \rightarrow \hAut(\OP^2)_0 \quad \text{ and } \quad s\colon \Ffour/\Spin(7) \rightarrow \mathrm{C}(\OP^1,\OP^2),
\end{equation*}
where $\mathrm{C}(X,Y)$ is the topological space of all continuous maps from $X$ to $Y$ equipped with the compact open topology, and $\hAut(X) \subseteq \mathrm{C}(X,X)$ is the subspace consisting of all homotopy equivalences and $\hAut(\OP^2)_0$ its path component of the identity.

Our first result describes the effect of $s$ on rational homotopy groups.
\begin{thmx}\label{Main Theorem - Rational Homotopy Groups Cayley}
  The homomorphisms
  \begin{align*}
      \pi_r(s) \colon \pi_r(\Ffour)_\Q &\rightarrow \pi_r(\hAut(\OP^2),\id)_\Q, \\
      \pi_r(s) \colon \pi_r(\Ffour/\Spin(7))_\Q &\rightarrow \pi_r(\mathrm{C}(\OP^1,\OP^2),\incl)_\Q,
  \end{align*}
  are isomorphisms if $r>11$. 
  The target groups are zero if $r \leq 11$.
\end{thmx}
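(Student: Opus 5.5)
Throughout, ranks are those of $\pi_r(-)\otimes\Q$. The plan is to compute both sides rationally, show that the targets vanish for $r\le 11$, and then show that $\pi_r(s)$ is nonzero on the two remaining generators; since both sides will be one-dimensional in the relevant degrees, nonzero means isomorphism. We use that $\OP^2$ has minimal model $(\Lambda(x_8,y_{23}),\,dy=x^3)$ and that $\OP^1=S^8$ has minimal model $(\Lambda(a_8,b_{15}),\,db=a^2)$. Rationally, $\Ffour$ has primitive generators in degrees $3,11,15,23$. The fibration $\Spin(7)\to\Ffour\to\Ffour/\Spin(7)$, with $\Spin(7)$ having rational degrees $3,7,11$, shows that $\pi_*(\Ffour/\Spin(7))_\Q$ is one-dimensional in degrees $15$ and $23$ and zero otherwise. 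Here $\pi_7$ dies because the generator in degree $7$ must kill a class, and there is no degree $8$ class in $\Ffour$.

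For the targets, I would use the standard model of mapping spaces (Haefliger/Brown--Szczarba, or Lupton--Smith's derivation complex). Rationally, $\pi_r(\mathrm{C}(X,Y),f)$ for $r\ge 2$ is $H_r$ of the complex $\mathrm{Der}(\Lambda V_Y, A_X; f)$ of $f$-derivations from the model of $Y$ into a model $A_X$ of $X$, with the differential $D\theta=\theta d\mp d\theta$. For $\hAut(\OP^2)_0$ this is $\mathrm{Der}(\Lambda(x,y))$. Its derivations are determined by $\theta(x)\in\Lambda(x,y)$ of degree $8-r$ and $\theta(y)$ of degree $23-r$. For $r>0$ the candidates are:
\begin{itemize}
\item $\theta(x)=1$ (degree $8$);
\item $\theta(y)=x^k$ for $k=0,1,2$ (degrees $23,15,7$).
\end{itemize}
We have $D(\partial/\partial x)=\pm 3x^2\,\partial/\partial y$, so the degree-$8$ and degree-$7$ classes cancel. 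This leaves $\pi_{15}$ and $\pi_{23}$, each of rank one. For $\mathrm{C}(S^8,\OP^2)$ at $\incl$, take $A_X=H^*(S^8)=\Q[a]/a^2$, with $\incl$ sending $x\mapsto a$ and $y\mapsto 0$. The derivations are:
\begin{itemize}
\item $\theta(x)\in\{1,a\}$, in degrees $8$ and $0$;
\item $\theta(y)\in\{1,a\}$, in degrees $23$ and $15$.
\end{itemize}
The differential satisfies $D\theta(y)=\pm\theta(x^3)=\pm 3\,\incl(x)^2\theta(x)=0$, since $a^2=0$. So in positive degrees we get $\pi_8,\pi_{15},\pi_{23}$, each of rank one. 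I must double-check $\pi_8$: the evaluation fibration $\mathrm{Map}_*\to\mathrm{C}\to\OP^2$ contributes $\pi_8(\OP^2)_\Q$, and the degree-$8$ derivation $\theta(x)=1$ seems to survive. This would contradict the vanishing claim for $r\le 11$. I need to recheck via the long exact sequence, with care about connecting maps. Possibly $\pi_9(\mathrm{Map}_*(S^8,\OP^2))=\pi_{17}(\OP^2)_\Q=0$ and $\pi_8(\mathrm{Map}_*)=\pi_{16}(\OP^2)_\Q=0$, giving $\pi_8(\mathrm{C})_\Q\cong\pi_8(\OP^2)_\Q=\Q$. If so, the second statement's vanishing range must be read with care, or I should recompute. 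In the proof I will settle this by an honest computation. The key point in any case is that degrees $15$ and $23$ are one-dimensional on both sides.

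Surjectivity (equivalently, nonvanishing) is the main obstacle. I would determine $s$ rationally by modelling the action map $\Ffour\times\OP^2\to\OP^2$ as a dga map
\[
\Lambda(x,y)\to\Lambda(u_3,u_{11},u_{15},u_{23})\otimes\Lambda(x,y),
\]
with $x\mapsto x$ and $y\mapsto y+\lambda\,u_{15}x+\mu\,u_{23}$. Compatibility with $d$ forces nothing on $\lambda,\mu$, so the point is to show $\lambda,\mu\neq0$. Adjointly, these coefficients are exactly the images of the generators under $\pi_*(s)$.

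To prove $\mu\ne0$, restrict the action to the orbit map $\Ffour\to\OP^2$, $g\mapsto g\cdot\OP^0$. Its homotopy fibre is $\Spin(9)$, which has rational degrees $3,7,11,15$. The long exact sequence then forces $\pi_{23}(\Ffour)_\Q\to\pi_{23}(\OP^2)_\Q$ to be onto, giving $\mu\ne0$. For $\lambda$, use the $\Spin(9)$-action on the fibre. Alternatively, use the composite $\Ffour/\Spin(7)\to\mathrm{C}(S^8,\OP^2)\to\mathrm{Map}$ with evaluation: the map $\Spin(9)/\Spin(7)\to\Ffour/\Spin(7)$ carries the $\pi_{15}$ class of $S^{15}\subset\Spin(9)/\Spin(7)$, and the action of $\Spin(9)$ on $\OP^1=S^8$ is the standard one. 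For the standard action, the map $S^{15}\to\Omega^8 S^8$-type component detects the Hopf-type class $\theta(y)=a$, which is nonzero. Thus $\lambda\ne0$, and the $\hAut$ case follows by restricting along $\incl$.
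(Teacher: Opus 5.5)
\textbf{Overall.} For $r>11$ your strategy is essentially the paper's, and the idea for $\lambda\neq0$ is the right one. However, that step is not carried out, and the version you wrote does not make sense. There is also an error in your computation of $\pi_8(\Ffour/\Spin(7))_\Q$.

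\textbf{Comparison with the paper.} Degrees, together with the restriction to $\{1\}\times\OP^2$, force $\mathsf{M}(\ell)(b_{23})=1\otimes b_{23}+\lambda\,\xi_{15}\otimes a_8+\mu\,\xi_{23}\otimes 1$. You get $\mu\neq0$ from the orbit map $\Ffour\to\OP^2$, as in Lemma~\ref{lem: HP and OP projection}. You control $\lambda$ by restricting to a $\Spin(9)$ acting on $\OP^1\cong S^8$, which is the induction step of Proposition~\ref{prop: left action model projectives spaces}. You detect non-triviality through the classes $[a_8\otimes b_{23}^\vee]$ and $[1\otimes b_{23}^\vee]$ in the derivation complex. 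These are non-zero because the complex vanishes in degrees $16$ and $24$. This is a legitimate substitute for the paper's explicit description of homotopies (Proposition~\ref{prop: description of homotopies}). Deducing the $\hAut(\OP^2)$ case from the $\mathrm{C}(\OP^1,\OP^2)$ case by restriction along $\incl$ is a nice economy. What the paper's heavier route buys is the full model of Theorem~\ref{Main Theorem - Homotopy Class Left Action}, with $\lambda=\mu=1$, which you do not need.

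\textbf{The gap: $\lambda\neq0$.} Your ``alternative'' fails for two reasons.
\begin{itemize}
\item If $\Spin(9)$ acts on $\OP^1$ through the standard representation of $\SOrth(9)$, the kernel of the action is $\Z_2$. So no $\Spin(7)\subseteq\Spin(9)$ acts trivially, and there is no map $\Spin(9)/\Spin(7)\to\mathrm{C}(\OP^1,\OP^2)$.
\item The copy of $\Spin(9)$ that maps $\OP^1$ to itself is the stabiliser of its pole $\diag(0,0,1)$. It is not the fibre $\mathrm{Stab}(\OP^0)$ of the orbit map.
\end{itemize}
Drop the quotient and work with that $\iota\colon\Spin(9)\hookrightarrow\Ffour$ directly.
\begin{itemize}
\item $H(\iota)(\xi_{15})=c\,\pi_{15}$ with $c\neq0$, since $\pi_{15}(\Spin(9))_\Q\to\pi_{15}(\Ffour)_\Q$ is an isomorphism. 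Also $H(\iota)(\xi_{23})=0$.
\item The standard action on $S^8$ is modelled by $b_{15}\mapsto 1\otimes b_{15}+\nu\,\pi_{15}\otimes 1+\rho\,\pi_7\otimes a_8$. Here $\nu\neq0$ because $\pi_{15}(\SOrth(9))_\Q\to\pi_{15}(S^8)_\Q$ is onto.
\item Let $f\colon S^{15}\to\Spin(9)$ be dual to $\pi_{15}$. The maps $s\circ\iota\circ f$ and $S^{15}\xrightarrow{f}\Spin(9)\to\hAut(\OP^1)\xrightarrow{\incl\circ(\,\cdot\,)}\mathrm{C}(\OP^1,\OP^2)$ are literally equal.
\item Using $\mathsf{M}(\incl)(b_{23})=a_8b_{15}$, their classes are $c\lambda\,[a_8\otimes b_{23}^\vee]$ and $\nu\,[a_8\otimes b_{23}^\vee]$ respectively. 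Hence $\lambda=\nu/c\neq0$.
\end{itemize}

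\textbf{Degree $8$.} Your suspicion is right, and you should state it outright. Both your derivation count and your evaluation-fibration argument give $\pi_8(\mathrm{C}(\OP^1,\OP^2),\incl)_\Q\cong\Q$. This agrees with the paper's own Lemma~\ref{lem: h-groups mapping spaces}, which lists the extra degree $2k=8$. It also agrees with Theorem~\ref{thm: general main result rhg projective spaces}, which asserts an isomorphism in all positive degrees for this map. So the vanishing clause of the statement holds for $\hAut(\OP^2)$, where the degree-$7$ and degree-$8$ derivations cancel. It fails for the second homomorphism at $r=8$.

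Your computation of the source in that degree is wrong, though.
\begin{itemize}
\item $\pi_7(\Spin(7))_\Q\cong\Q$ maps to $\pi_7(\Ffour)_\Q=0$, so exactness makes $\partial\colon\pi_8(\Ffour/\Spin(7))_\Q\to\pi_7(\Spin(7))_\Q$ surjective.
\item $\partial$ is injective because $\pi_8(\Ffour)_\Q=0$.
\end{itemize}
Hence $\pi_8(\Ffour/\Spin(7))_\Q\cong\Q$, and there is no rank mismatch. None of this affects the isomorphisms for $r>11$, where both sides have rank one exactly in degrees $15$ and $23$.
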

From the knowledge of the rational homotopy groups of $\Ffour$ and $\Ffour/\Spin(7)$ we deduce:
\begin{corx}\label{Main Cor: Rational Homotopy Groups Mapping Spaces}
    The rational homotopy groups of $\hAut(\OP^2)$ and $\mathrm{C}(\OP^1,\OP^2)$ are given by
    \begin{equation*}
        \pi_r\bigl(\hAut(\OP)^2,\id\bigr)_\Q = \pi_r\bigl(\mathrm{C}(\OP^1,\OP^2),\incl\bigr)_\Q = \begin{cases}
            \Q, & \text{if } r=15,23, \\
            0, & \text{otherwise.}
        \end{cases}
    \end{equation*}
\end{corx}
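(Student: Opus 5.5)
Corollary~\ref{Main Cor: Rational Homotopy Groups Mapping Spaces} follows from Theorem~\ref{Main Theorem - Rational Homotopy Groups Cayley} once we know the rational homotopy groups of $\Ffour$ and of $\Ffour/\Spin(7)$. By the theorem, the targets vanish for $r\le 11$ and agree with the sources for $r>11$. So it suffices to show two things: $\pi_r(\Ffour)_\Q$ and $\pi_r(\Ffour/\Spin(7))_\Q$, restricted to $r>11$, are both $\Q$ in degrees $15$ and $23$ and zero otherwise.

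\emph{The group $\Ffour$.} A compact connected Lie group is rationally a product of odd spheres whose dimensions are $2d_i-1$, where the $d_i$ are the degrees of the basic invariants. For $\Ffour$ these degrees are $2,6,8,12$, so $\Ffour\simeq_\Q S^3\times S^{11}\times S^{15}\times S^{23}$. Hence $\pi_r(\Ffour)_\Q=\Q$ for $r=3,11,15,23$ and is zero otherwise. Discarding $r\le 11$ leaves exactly $r=15,23$.

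\emph{The homogeneous space $\Ffour/\Spin(7)$.} Here we use the fibration $\Spin(7)\to \Ffour\to \Ffour/\Spin(7)$ and its rational long exact sequence. The degrees of $\Spin(7)$ are $2,4,6$, so $\Spin(7)\simeq_\Q S^3\times S^7\times S^{11}$. Its rational homotopy is therefore concentrated in degrees $3,7,11\le 11$. For $r\ge 13$ the groups $\pi_r(\Spin(7))_\Q$ and $\pi_{r-1}(\Spin(7))_\Q$ both vanish, so the exact sequence gives $\pi_r(\Ffour/\Spin(7))_\Q\cong \pi_r(\Ffour)_\Q$.

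It remains to treat $r=12$. On the fibre side, $\pi_{12}(\Ffour)_\Q=0$. The next term is $\pi_{11}(\Spin(7))_\Q\to\pi_{11}(\Ffour)_\Q$, which I claim is injective. The claim follows because the degree-$12$ invariant polynomial of $\Ffour$ restricts nontrivially to the maximal torus of $\Spin(7)$. Equivalently, one can use the known fact that $\Ffour/\Spin(7)$ has $\pi_*\otimes\Q$ in degrees $8,15,23$: it is rationally $S^8\times S^{15}\times S^{23}$, being the total space of an $S^{15}$-fibration over $\OP^2$. The second description is the cleaner way to settle this: the fibration $S^{15}\to\Ffour/\Spin(7)\to \OP^2$ with $\pi_*(\OP^2)_\Q$ in degrees $8,23$ gives $\pi_{12}=0$ directly.

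This gives $\Q$ in degrees $15,23$ for both sources when $r>11$. Combined with Theorem~\ref{Main Theorem - Rational Homotopy Groups Cayley}, this yields the stated groups. The only delicate step is the degree-$12$ check, and that is resolved by the $S^{15}$-fibration argument.
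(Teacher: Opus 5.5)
\textbf{Verdict.} Your route is the paper's own: combine Theorem~\ref{Main Theorem - Rational Homotopy Groups Cayley} with the rational homotopy of $\Ffour$ and $\Ffour/\Spin(7)$. For $\hAut(\OP^2)$ this works. For $\mathrm{C}(\OP^1,\OP^2)$ it does not, because the step ``the targets vanish for $r\le 11$'' fails at $r=8$. The stated value is in fact false there.

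\textbf{The degree-$8$ class.} Take the evaluation fibration
\[
\Omega^8_{\incl}\OP^2 \longrightarrow \mathrm{C}(\OP^1,\OP^2;\incl) \xrightarrow{\ \mathrm{ev}\ } \OP^2 .
\]
Its fibre is a single path component of $\Omega^8\OP^2$, so $\pi_r$ of the fibre is $\pi_{r+8}(\OP^2)$ for $r\ge 1$. Since $\pi_*(\OP^2)_\Q$ is concentrated in degrees $8$ and $23$, the fibre has no rational homotopy in degrees $7$ and $8$. Hence $\mathrm{ev}$ gives $\pi_8(\mathrm{C}(\OP^1,\OP^2),\incl)_\Q\cong \pi_8(\OP^2)_\Q\cong\Q$.

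The same class appears in the derivation complex $\mathrm{Der}(\mathsf{M}_{\OP^2},\mathsf{M}_{\OP^1})$. There one has $\delta(1\otimes a_8^\vee)=-3a_8^2\otimes b_{23}^\vee$ and $\delta(b_{15}\otimes b_{23}^\vee)=a_8^2\otimes b_{23}^\vee$. So $1\otimes a_8^\vee+3\,b_{15}\otimes b_{23}^\vee$ is a degree-$8$ cycle, and there are no degree-$9$ derivations that could bound it.

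\textbf{Consistency with the rest of the paper.} This is the summand $\{2k\}$ in Lemma~\ref{lem: h-groups mapping spaces}. It also matches the fourth line of Theorem~\ref{thm: general main result rhg projective spaces}, which asserts an isomorphism for all $k>0$. Combined with your own correct observation that $\pi_8(\Ffour/\Spin(7))_\Q\cong\Q$, that theorem already forces a $\Q$ in degree $8$; that observation should have been a warning sign.

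\textbf{Consequence.} The correct groups for the mapping space are $\Q$ in degrees $8,15,23$. The vanishing clause of Theorem~\ref{Main Theorem - Rational Homotopy Groups Cayley}, and with it this corollary, is only right for $\hAut(\OP^2)$. The paper's one-line deduction rests on the same clause and has the same defect. No argument can establish the mapping-space half as stated.

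\textbf{Two smaller errors.}
\begin{enumerate}
\item $\Ffour/\Spin(7)$ is $31$-dimensional, so it is not rationally $S^8\times S^{15}\times S^{23}$. That product has dimension $46$ and a two-dimensional $\pi_{15}\otimes\Q$. What you actually need comes from the fibration over $\OP^2$ with fibre $\Spin(9)/\Spin(7)\simeq_\Q S^{15}$: it gives rational homotopy in degrees $8,15,23$, and hence $\pi_{12}=0$.
\item ``The degree-$12$ invariant restricts nontrivially'' does not give injectivity of $\pi_{11}(\Spin(7))_\Q\to\pi_{11}(\Ffour)_\Q$. You need the restriction to be indecomposable in $H^*(B\Spin(7);\Q)=\Q[p_1,p_2,p_3]$, i.e.\ to involve $p_3$. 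A nonzero restriction such as $p_1^3$ or $p_1p_2$ would not suffice.
\end{enumerate}
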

Although Corollary \ref{Main Cor: Rational Homotopy Groups Mapping Spaces} can be easily computed from Sullivan's technique \cite{Sullivan1977Infinitesimal} and its generalisation to arbitrary mapping space in \cite{Buijs2008RationalLieAlgebraFunction}, these computations do not provide the information that the non-trivial elements arise from $\pi_{15}(\Ffour)_\Q$ and $\pi_{23}(\Ffour)_\Q$. 

Similar results have been by established Meier and Strebel for $\RP^n$ \cite{Meier1981RP}, by Sasao for $\CP^n$ \cite{Sasao1974CP}, and by Yamaguchi for $\HP^n$ \cite{Yamaguchi1983HP}.
However, their proof strategies are not applicable to the Cayley plane.
Indeed, Meier and Strebel rely on the classification of $\Q$-acyclic spaces with $\Q$-acyclic fundamental groups, while Sasao and Yamaguchi use the fibrations
\begin{equation*}
    S^1 \rightarrow S^{2n+1} \rightarrow \CP^n \quad \text{ and } \quad S^3 \rightarrow S^{4n+3} \rightarrow \HP^n
\end{equation*}
in an essential manner.
The Cayley plane, however, does not exhibit an analogous fibration because the octonions fail to be associative, which forces us to use a proof strategy that is entirely different from theirs.

In contrast to the rather topological approach of Sasao and Yamaguchi, we make use of the full-fledged machinery of rational homotopy theory by transferring the problem into the set-up of Sullivan algebras, where it becomes essentially an (linear) algebraic problem that can be solved in a fairly hands-on manner.
This approach has the additional advantage that all simply connected rank-1 symmetric spaces can be treated in a unifying theme, which enables us to provide new proofs for the results in \cite{Sasao1974CP} and \cite{Yamaguchi1983HP} as well.

Furthermore, our approach allows to derive the stronger result that describes the rational homotopy class of the symmetry group action on projective spaces.
To formulate this result in the special case of the action $\ell \colon \Ffour \times \OP^2 \rightarrow \OP^2$ (for the more general case we refer to Proposition \ref{prop: left action model projectives spaces} and Proposition \ref{prop: left action model spheres}), let $\mathsf{M}_{\OP^2} = \Lambda[a_{8},b_{23} \, | \, db_{23} = a_{8}^{3}]$ be the minimal model of $\OP^2$ in the sense of rational homotopy theory, and recall that the rational cohomology ring $H(\Ffour) = \Lambda[\xi_3,\xi_{11},\xi_{15},\xi_{23}]$ is an exterior algebra.
\begin{thmx}\label{Main Theorem - Homotopy Class Left Action}
    The rational model $\mathsf{M}(\ell) \colon \mathsf{M}_{\OP^2} \rightarrow H(\Ffour) \otimes \mathsf{M}_{\OP^2}$ of the action $\ell \colon \Ffour \curvearrowright \OP^2$ is the (homotopy class of the) homomorphism of commutative, differential, graded algebras determined by
    \begin{equation*}
        \mathsf{M}(\ell)(a_8) = 1 \otimes a_8 \quad \text{ and } \quad \mathsf{M}(\ell)(b_{23}) = 1 \otimes b_{23}  +  \xi_{15} \otimes a_8 + \xi_{23} \otimes 1. 
    \end{equation*}
    Moreover, the coefficients $\xi_{15}$ and $\xi_{23}$ are homotopy invariant in the sense that they remain unchanged if one changes $\mathsf{M}(\ell)$ within its homotopy class.
\end{thmx}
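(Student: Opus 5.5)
Since $\Ffour$ is formal with $H(\Ffour)=\Lambda[\xi_3,\xi_{11},\xi_{15},\xi_{23}]$ (a Sullivan algebra with zero differential), a model of $\ell$ is a cdga map $\mathsf{M}(\ell)\colon \mathsf{M}_{\OP^2}\to H(\Ffour)\otimes\mathsf{M}_{\OP^2}$, unique up to homotopy. The plan has three steps: write down the most general such map by degree reasons, fix its coefficients using the geometry of the action, and then check which coefficients survive homotopy.

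\textbf{General form.} The image of $a_8$ is a cocycle of degree $8$ in $H(\Ffour)\otimes\mathsf{M}_{\OP^2}$. The only candidates are $1\otimes a_8$ and products of the $\xi_i$ of total degree $8$. Since all $\xi_i$ have odd degree, such products need an even number of factors, and the smallest such degree other than $0$ is $3+11=14>8$. Hence $\mathsf{M}(\ell)(a_8)=\lambda\,(1\otimes a_8)$. Restricting along the unit $e\in\Ffour$ gives $\ell|_{\{e\}\times\OP^2}=\id$, which forces $\lambda=1$.

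For $b_{23}$, the image must satisfy $d\,\mathsf{M}(\ell)(b_{23})=1\otimes a_8^3$, so it equals $1\otimes b_{23}+z$ with $z$ a cocycle of degree $23$. The group of such cocycles is spanned by $\xi_{23}\otimes 1$, $\xi_3\xi_{11}\xi_{\dots}$-type products of degree $23$ (only $\xi_{23}$, since the odd-length products $\xi_3\xi_{11}\xi_{15}$ etc. have degree $29$ or more), together with $\xi_{15}\otimes a_8$ and $\xi_{i}\xi_j\xi_k\otimes a_8^{m}$ terms. Listing degrees, the only possibilities are $\xi_{23}\otimes1$, $\xi_{15}\otimes a_8$, and $\xi_{\text{deg }7}\otimes a_8^2$, and the last does not exist. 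So
\[
  \mathsf{M}(\ell)(b_{23})=1\otimes b_{23}+\alpha\,\xi_{15}\otimes a_8+\beta\,\xi_{23}\otimes 1 .
\]
(Terms involving $b_{23}$ with a positive-degree coefficient cannot occur for degree reasons.)

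\textbf{Coefficients.} A homotopy of maps $\mathsf{M}_{\OP^2}\to H(\Ffour)\otimes\mathsf{M}_{\OP^2}$ changes $b_{23}$ only by $d$ of an element of degree $22$ in the target, plus a derivation-type correction. The coboundaries of degree $23$ are $d(\xi\otimes b_{23})$-type terms, which are impossible for degree reasons, so $\alpha$ and $\beta$ are homotopy invariants. It remains to show $\alpha\neq0$ and $\beta\neq0$; after rescaling the generators $\xi_{15},\xi_{23}$ this gives the stated formula.

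I would obtain these from Theorem~\ref{Main Theorem - Rational Homotopy Groups Cayley}, which is likely proved in this framework. The adjoint of $\mathsf{M}(\ell)$ models $s\colon\Ffour\to\hAut(\OP^2)$. In the Sullivan/Buijs model of the mapping space, the coefficient $\beta$ pairs $\pi_{23}(\Ffour)_\Q$ with the derivation $b_{23}\mapsto 1$, and $\alpha$ pairs $\pi_{15}(\Ffour)_\Q$ with the derivation $b_{23}\mapsto a_8$. Nonvanishing of $\pi_{15}(s)$ and $\pi_{23}(s)$ is therefore equivalent to $\alpha,\beta\neq0$.

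\textbf{Main obstacle.} To prove this nontriviality independently, I would use the orbit maps. For $\beta$, compose with $\Ffour\times\{\OP^0\}$: the orbit map $\Ffour\to\OP^2$ is the fibration $\Ffour\to\Ffour/\Spin(9)$, whose model sends $b_{23}$ to $\xi_{23}$, because $\xi_{23}$ is not in the image from $\Spin(9)$ (it transgresses to $a_8^3$). For $\alpha$, restrict to $\Ffour\times\OP^1\to\OP^2$ and use $\Ffour/\Spin(7)$ and the fibration structure of $\Ffour/\Spin(7)\to\Ffour/\Spin(9)$ with fibre $S^8$. Here one must show that the class $\xi_{15}$ survives to $\Ffour/\Spin(7)$ and is detected by evaluation against the fundamental class of $\OP^1=S^8$. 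This computation — a rational model of $\Spin(7)\subset\Spin(9)\subset\Ffour$ — is the hardest step.
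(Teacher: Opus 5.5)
Your reduction is correct. Degree counting ($H^7(\Ffour)=H^8(\Ffour)=0$, $H^{15}(\Ffour)=\Q\xi_{15}$, $H^{23}(\Ffour)=\Q\xi_{23}$) forces every model to be $a_8\mapsto 1\otimes a_8$, $b_{23}\mapsto 1\otimes b_{23}+\alpha\,\xi_{15}\otimes a_8+\beta\,\xi_{23}\otimes 1$. Each homotopy class contains exactly one such map: the ``derivation-type'' correction is $3x_7\otimes a_8^2$ with $x_7\in H^7(\Ffour)=0$, which you should state explicitly, and every element of degree $22$ is a cocycle. Your claim $\beta\neq0$ is also fine. It is most cleanly justified by the long exact sequence of $\Spin(9)\to\Ffour\to\OP^2$ in degree $23$ together with the uniqueness of representatives in Lemma~\ref{lem: rigid homotopy classes}.

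The gap is $\alpha\neq0$, which is the real content of the statement. Deriving it from Theorem~\ref{Main Theorem - Rational Homotopy Groups Cayley} is circular. The paper proves that theorem (Theorem~\ref{thm: general main result rhg projective spaces}) precisely by composing this model with $\mathsf{M}(f)\otimes\id$, that is, through the equivalence $\pi_{15}(s)\neq0\Leftrightarrow\alpha\neq0$ that you note yourself. Your independent route is not carried out, and its geometry is wrong as stated. The fibre of $\Ffour/\Spin(7)\to\Ffour/\Spin(9)$ is $\Spin(9)/\Spin(7)$, which is $15$-dimensional, not $S^8$; the $S^8$-bundle is $\Ffour/\Spin(8)\to\Ffour/\Spin(9)$. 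You also do not explain how ``evaluation against the fundamental class of $\OP^1$'' would detect a coefficient sitting in the image of the non-closed generator $b_{23}$.

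The missing idea is to restrict to the copy of $\Spin(9)\subset\Ffour$ that preserves the line $\OP^1\cong S^8$, and to feed in the sphere case. Consider the square built from $\mathsf{M}(\ell)$, $H(\iota)\otimes\mathsf{M}(\incl)$, $\mathsf{M}(\incl)$, and the model $b_{15}\mapsto 1\otimes b_{15}+\pi_{15}\otimes 1$ of $\Spin(9)\curvearrowright S^8$ from Proposition~\ref{prop: left action model spheres}. The non-vanishing of $\pi_{15}$ there comes from $\Spin(8)\to\Spin(9)\to S^8$, as in Lemma~\ref{lem: Sphere projection}. This square commutes up to homotopy. Since $H(\iota)(\xi_{15})=\pi_{15}$, $H(\iota)(\xi_{23})=0$ and $\mathsf{M}(\incl)(b_{23})=a_8b_{15}$, the two composites send $b_{23}$ to $1\otimes a_8b_{15}+\alpha\,\pi_{15}\otimes a_8$ and to $1\otimes a_8b_{15}+\pi_{15}\otimes a_8$ respectively. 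Now apply Proposition~\ref{prop: description of homotopies} with $n=3$, $m=2$ and $\mathsf{A}=H(\Spin(9))$. It shows that a homotopy can only change the image of $b_{23}$ by elements of $\mathsf{A}\otimes a_8^2\cdot\Lambda W$; the now non-zero contribution $3x_7\otimes a_8^2$ with $x_7\in\Q\pi_7$ is of this form. Hence the coefficient of $a_8$ is a homotopy invariant, and $\alpha=1$. This argument also fixes the normalisation of $\xi_{15}$ geometrically rather than by rescaling.
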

As a consequence, the rational homotopy class of $s \colon \Ffour \rightarrow \hAut(\OP^2)$ is completely determined by its induced homomorphisms on rational homotopy groups (which is not clear at first).

\vspace{10pt}
\noindent
\textbf{Outline of the article:}
Section \ref{Section: Preliminaries} provides a concise recollection on rational homotopy theory needed in this article. 
Subsection \ref{subsection: RHT} presents the abstract theory while Subsection \ref{subsection: examples of models} provides the rational models for the examples we study in Section \ref{Section: Robustness}. 
It serves further as an opportunity to introduce our notational conventions.
Section \ref{Section: Robustness} is reserved for the proofs of Theorem \ref{thm: general main result rhg projective spaces} and \ref{thm: general main result rhg spheres} that imply Theorem \ref{Main Theorem - Rational Homotopy Groups Cayley} and Proposition \ref{prop: left action model spheres} and \ref{prop: left action model projectives spaces} that imply Theorem \ref{Main Theorem - Homotopy Class Left Action}.

\vspace{10pt}
\noindent
\textbf{Background Information:}
This paper is based on the MSc thesis of the second author, entitled `Non-triviality of $B\mathrm{Iso}(\OP^2) \rightarrow B\hAut(\OP^2)$', which was submitted to the University of Melbourne in May 2026 under the supervision of the first author and Diarmuid Crowley.

\vspace{10pt}
\noindent
\textbf{Acknowledgments:} 
T.~Hertl's work was funded by the Australian Research Council Discovery Project DP220102163. 
The two authors would like to thank Diarmuid Crowley for his helpful remarks and his interest in this work.

\section{Preliminaries}\label{Section: Preliminaries}

\subsection{Rational Homotopy Theory}\label{subsection: RHT}

We start with a concise recollection of the required basics on rational homotopy theory followed by some sample calculations in the next subsection.
More details can be found in \cite{Bousfield1976PLdeRham} and \cite{Felix2001RHT}.
We first discuss rational spaces and then look at their algebraic counterparts.
\subsubsection{Rational Spaces}
Recall that a path-connected topological space $X$ is called \emph{nilpotent} if its fundamental group is nilpotent and if the action of $\pi_1(X)$ on $\pi_n(X)$ is nilpotent, see Chapter II of \cite{Hilton1975Localisation}.
A nilpotent space is called \emph{rational} if, for all $n$ and $r \in \N$, the $n$-th power map $(\placeholder)^n \colon \pi_r(X) \rightarrow \pi_r(X)$ is a bijection (an isomorphism if $r > 1)$.
The \emph{rationalisation} of a topological space $X$ is a pair $(X_\Q,\mathsf{loc}_\Q)$ consisting of a rational space $X_\Q$ and a continuous map $\mathsf{loc}_\Q \colon X \rightarrow X_\Q$ such that precomposition with $\mathsf{loc}_\Q$ induces a bijection
\begin{equation*}
    (\placeholder) \circ \mathsf{loc}_\Q \colon [X_\Q,Y] \xrightarrow{\cong} [X,Y]
\end{equation*}
for all nilpotent, rational spaces $Y$.
Rationalisations exists for every nilpotent CW-complex, see Theorem 3A in Chapter II of \cite{Hilton1975Localisation}, and their universal property implies that they are unique up to homotopy equivalence.
Moreover, $\pi_r(\mathsf{loc}_\Q) \colon \pi_r(X)_\Q \rightarrow \pi_r(X_\Q)$ induces an isomorphism\footnote{For $\pi_1(X)$, we use the Mal'cev-completion, which agrees with the tensor product if $\pi_1(X)$ is abelian, see Chapter I of \cite{Hilton1975Localisation} for details.}, see Theorem 3B in Chapter II of \cite{Hilton1975Localisation}.

A continuous map $f \colon X \rightarrow Y$ between two path-connected nilpotent topological spaces is called a \emph{rational homotopy equivalence} if it induces an isomorphism between their rationalised homotopy groups.
Two spaces are called \emph{rational homotopy equivalent} if there exists a zig-zag $X \rightarrow Z_1 \leftarrow \dots \leftarrow Z_n \rightarrow Y$ of rational homotopy equivalences between them. Equivalently, $X$ and $Y$ are rational homotopy equivalent if their rationalisations are weak homotopy equivalent.

The reason why we consider the larger class of nilpotent spaces (instead of simply connected spaces) is that this subclass of topological spaces is closed under taking mapping spaces:
If $X$ is a finite CW complex and $Y$ a nilpotent CW complex, then each path-component of the space of continuous maps $\mathrm{C}(X,Y)$ is nilpotent, see Theorem 2.5 in Chapter II of \cite{Hilton1975Localisation}.
For our purpose, the following result is essential.
\begin{theorem}{\cite[Theorem 3.11]{Hilton1975Localisation}}\label{thm: localisation of mapping spaces}
    Let $X$ be a finite, connected CW-complex, $Y$ be a nilpotent CW-complex, and $\mathsf{loc}_\Q \colon Y \rightarrow Y_\Q$ its rationalisation.
    Then postcomposition with $\mathsf{loc}_\Q$ induces a rationalisation  $\mathsf{loc}_\Q \circ (\placeholder) \colon \mathrm{C}(X,Y;f) \rightarrow \mathrm{C}(X,Y_\Q,\mathsf{loc}_\Q \circ f)$, where $\mathrm{C}(X,Y;f)$ is the path component of $\mathrm{C}(X,Y)$ that contains $f \colon X \rightarrow Y$.
\end{theorem}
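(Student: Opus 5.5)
The statement is Theorem 3.11 of Hilton–Mislin–Roitberg, which the paper quotes rather than proves. I would prove it by induction over the cells of $X$, using the restriction fibrations of mapping spaces and the fact that rationalisation preserves fibration sequences of nilpotent spaces.

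\textbf{Step 1 (identifying the target).} Write $\phi = \mathsf{loc}_\Q\circ(\placeholder)$. Both components $\mathrm{C}(X,Y;f)$ and $\mathrm{C}(X,Y_\Q;\mathsf{loc}_\Q\circ f)$ are nilpotent by Theorem 2.5 of \cite{Hilton1975Localisation}, since $X$ is finite. It therefore suffices to show two things:
\begin{itemize}
\item the target component is rational, i.e.\ all its homotopy groups are uniquely divisible (Mal'cev-complete in degree $1$);
\item $\pi_r(\phi)$ is a rationalisation of groups for all $r\geq 1$, i.e.\ it becomes an isomorphism after tensoring with $\Q$ (Mal'cev completion for $r=1$).
\end{itemize}
By the universal property and the homotopy-group characterisation of rationalisation, these two facts give the claim.

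\textbf{Step 2 (the base cases).} I would argue by induction on the number of cells of $X$. If $X$ is a point, $\phi$ is $\mathsf{loc}_\Q$ itself. If $X = S^n$, we use the evaluation fibration
\begin{equation*}
\Omega^n Y \rightarrow \mathrm{C}(S^n,Y) \rightarrow Y.
\end{equation*}
For $\Omega^nY$, the homotopy groups are shifted homotopy groups of $Y$, so $\Omega^n\mathsf{loc}_\Q$ localises them. This needs care in low degrees: $\pi_0(\Omega^n Y)=\pi_n(Y)$, so fibres over different components must be handled component by component.

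\textbf{Step 3 (the inductive step).} Write $X = X'\cup_{\alpha} e^n$. Restriction gives a fibration
\begin{equation*}
\mathrm{C}(X,Y;f) \rightarrow \mathrm{C}(X',Y;f|_{X'}),
\end{equation*}
whose fibre over $f|_{X'}$ is a union of components of $\Omega^n Y$. Applying $\mathsf{loc}_\Q$ gives a map of fibrations, and the long exact homotopy sequences are compared. By induction and the base case, the outer terms are localisations, and the generalised five lemma for localisations of nilpotent groups then shows that the middle map is a localisation.

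\textbf{Main obstacle.} The hardest step is the low-degree part of Step 3. Here the exact sequences end in $\pi_1$ and in pointed sets $\pi_0$, and only some fibre components lie over the chosen component. One must check two things:
\begin{itemize}
\item the image of $\pi_1$ of the base acting on $\pi_0$ of the fibre localises correctly, so that the relevant subgroup and orbit data rationalise;
\item $\pi_1$ of each component is nilpotent with nilpotent action, which is exactly what makes the localisation five-lemma applicable.
\end{itemize}
To handle this, I would pass to the universal-cover-type exact sequence of nilpotent groups and use that localisation is exact on nilpotent groups. This is precisely the technical core of Hilton–Mislin–Roitberg's argument.
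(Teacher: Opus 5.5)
The paper gives no proof of this statement; it quotes it from Hilton--Mislin--Roitberg, so your outline has to stand on its own. Your reduction in Step~1 is fine: both components are nilpotent by Theorem~2.5 of the reference, and a map of nilpotent spaces is a rationalisation iff it rationalises all homotopy groups. Step~3 is also fine in degrees $r\geq 2$.

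The gap is exactly the step you flag, and your proposed remedy does not close it. Write $E=\mathrm{C}(X,Y;f)$, $B=\mathrm{C}(X',Y;f|_{X'})$ and let $F$ be the fibre. The relevant sequence is
\[
\pi_2(B)\to\pi_1(F,f)\to\pi_1(E,f)\to\pi_1(B,f|_{X'})\xrightarrow{\ \partial\ }\pi_0(F)\to\ast ,
\]
so you need $K=\operatorname{im}(\pi_1(E)\to\pi_1(B))$ to be carried by a rationalisation onto the corresponding $K'\subseteq\pi_1(B')$.

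\begin{itemize}
\item $K$ is the isotropy group of $[f]$ under the monodromy action of $\pi_1(B)$ on $\pi_0$ of the full fibre. That set is a torsor over $\pi_n(Y)$, and over the non-abelian $\pi_1(Y)$ when $n=1$.
\item For $n\geq 2$, $\partial$ is only a crossed homomorphism; for $n=1$ it is not even that. So $K$ is not the kernel of any homomorphism, and exactness of localisation of nilpotent groups tells you nothing about it.
\item At $\pi_0(F)\to\pi_0(F')$ you only have a map of sets, which need not be bijective, so no five lemma applies there.
\item Passing to the covering of $B$ with fundamental group $K$ makes the fibre connected, but it presupposes what you need to prove. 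Namely, that $K'$ is closed under roots in $\pi_1(B')$ and that every element of $K'$ has a power in the image of $K$.
\end{itemize}
Your Step~2 for $X=S^n$ has the same problem. The only genuinely easy base case is a point, after replacing $X$ by a complex with one $0$-cell so that all intermediate subcomplexes stay connected.

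There are two ways to supply the missing ingredient.

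\emph{Staying with the cellular induction.} Write $K$ as the equaliser of two homomorphisms into a nilpotent group.
\begin{itemize}
\item For a $1$-cell with ends $x_0,x_1$ and $w=f|_{e^1}$, one has $K=\{\gamma:\gamma_0=w\gamma_1w^{-1}\}$, where $\gamma_i$ is the loop traced out at $x_i$.
\item For $n\geq 2$, compare $\gamma\mapsto(\partial\gamma,\mathrm{ev}\,\gamma)$ with $\gamma\mapsto(0,\mathrm{ev}\,\gamma)$ in the nilpotent group $\pi_n(Y)\rtimes\pi_1(Y)$. Here $\mathrm{ev}$ is evaluation at a point of the attaching sphere.
\end{itemize}
Then prove that rationalisation of nilpotent groups preserves equalisers. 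This follows from uniqueness of roots in rational nilpotent groups, together with the fact that $e(x)=e(y)$ iff $x^m=y^m$ for some $m\neq 0$.

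\emph{More cleanly, change the induction.} Induct over a principal refinement $Y_i\to Y_{i-1}$ of the Postnikov tower of $Y$ instead of over the cells of $X$.
\begin{itemize}
\item Each restriction $\mathrm{C}(X,Y_i)\to\mathrm{C}(X,Y_{i-1})$ is the homotopy fibre of $(k_i)_*\colon\mathrm{C}(X,Y_{i-1})\to\mathrm{C}(X,K(A_i,j_i+1))$.
\item The connecting map $\pi_1(B)\to\pi_0(\text{full fibre})\cong H^{j_i}(X;A_i)$ is then the homomorphism induced by $(k_i)_*$. So $K$ is an honest kernel and exactness of localisation applies.
\item The fibre components have $\pi_r\cong H^{j_i-r}(X;A_i)$. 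These rationalise correctly because $X$ is finite.
\item Since $X$ is finite-dimensional, each $\pi_r$ of the mapping space is already computed at a finite stage of the tower.
\end{itemize}
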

The advantage of rational spaces is that their homotopy types and the set of their homotopy classes can be completely described in terms of algebraic data.
\subsubsection{Sullivan Algebras}
The algebraic counterpart for rational topological spaces will be the category of commutative, differential, (cohomologically) graded, (unital) algebras (cdgas in short) over the field $\Q$ that are concentrated in non-negative degrees.
The assignment sending a topological space $X$ to $\Omega_{PL}(S(X))$, the cdga of polynomial differential forms on the singular set $S(X)$, yields a contravariant functor $\mathsf{Top} \rightarrow \mathsf{CDGA}$.
The cohomology $H(\Omega_{PL}(X))$ is isomorphic to the singular cohomology $H_{sing}(X;\Q)$ of the underlying topological space $X$, see \cite{Sullivan1977Infinitesimal} or \cite[Theorem 10.9]{Felix2001RHT}.

The role of CW-complexes inside the category $\mathsf{CDGA}$ are taken by \emph{Sullivan algebras}.
Recall that a Sullivan algebra $(\Lambda V,d)$ is the free algebra of a non-negatively graded, rational vector space $V$ such that $d$ satisfies the \emph{nilpotency} condition, which means that there is a filtration $V(0) \subseteq V(1) \subseteq \dots$ such that $\bigcup V(k) = V$, such that $d=0$ on $V(0)$, and $d \colon V(n) \rightarrow \Lambda V(n-1)$ for all $n\geq 1$.
A Sullivan algebra is called \emph{minimal} if $d(v)$ is quadratic, that is $d(V) \subseteq \Lambda^{\geq2} V$.
A homomorphism of commutative, differential, graded algebras (abbreviated to dga-homomorphism in the future) $\varphi \colon (A,d) \rightarrow (B,d)$ is a \emph{quasi-isomorphism} if its induced homomorphism on cohomology $H(\varphi) \colon H(A,d) \rightarrow H(B,d)$ is an isomorphism.

An important example of a (non-minimal) Sullivan algebra is the \emph{algebraic interval} $I = \Lambda[t,dt]$, with $t$ a variable in degree zero and the differential satisfying the tautological relation $d(t) = dt$ and $d(dt)=0$. 
It comes with two dga-homomorphisms $\ev_0,\ev_1 \colon \Lambda[t,dt] \rightarrow \Q$ that are completely described by $\mathrm{ev}_j(t) = j$ for $j=0,1$. 
A \emph{homotopy} between two dga-homomorphisms $\varphi_0,\varphi_1 \colon A \rightarrow B$ is a dga-homomorphism $H \colon A \rightarrow I \otimes B$ such that $\ev_j \circ H = \varphi_j$.
Being homotopic is an equivalence relation on the set $\mathrm{Hom}_{dga}(\Lambda V,A)$ if the domain is a Sullivan algebra, see \cite[Proposition 6.3]{Bousfield1976PLdeRham}.
Moreover, there is a form of Whitehead's theorem in the sense that postcomposition with quasi-isomorphisms $\varphi \colon (A_1,d) \rightarrow (A_2,d)$ induces a bijection between homomotopy classes $[\Lambda V,A_1] \rightarrow [\Lambda V,A_2]$, see \cite[Proposition 5.7]{Bousfield1976PLdeRham}.

A \emph{Sullivan model} of a topological space $X$ is a pair of a Sullivan algebra $(\Lambda V,d)$ and a quasi-isomorphism $m \colon \Lambda V \rightarrow \Omega_{PL}(X)$. 
If $(\Lambda V,d)$ is minimal, then we refer to it as a \emph{minimal model} of $X$ and denote it with $\mathsf{M}_X$.
The notation is justified, because minimal models exist for path-connected topological spaces \cite[Proposition 14.3]{Felix2001RHT} and are unique up to isomorphism \cite[Theorem 14.12]{Felix2001RHT}.
A \emph{rational model} for a continuous map $f \colon X \rightarrow Y$ is a dga-homomorphism between Sullivan models of domain and target $\varphi \colon \Lambda V_Y \rightarrow \Lambda V_X$ such that $m_X \circ \varphi$ is homotopic to $\Omega_{PL}(f) \circ m_Y$.
By Whiteheads theorem, the homotopy class of this lift is unique. 
We will denote the lift between two minimal models by $\mathsf{M}(f)$ (and do not notationally distinguish the homotopy class from any of its representatives).

It was proved in \cite[Theorem 4.3]{Bousfield1976PLdeRham} that $\mathsf{CDGA}$ has model category structure with weak-equivalences being quasi-isomorphisms and fibrations being surjective dga-homomorphism and that Sullivan algebras are cofibrant objects.
The authors of \cite{Bousfield1976PLdeRham} further show, see Chapter 9 in loc. cit. that the assignment $X \mapsto \mathsf{M}_X$ gives rise to a Quillen adjunction
\begin{equation*}
    \langle \, \cdot \, \rangle \colon \mathsf{CDGA} \rightleftarrows \mathsf{Top} : \mathsf{M}_{(\placeholder)}
\end{equation*}
that induces an equivalence of categories
\begin{equation*}
    \mathrm{fin}_\Q\text{-}\mathrm{Ho}(\mathsf{CDGA}) \overset{\cong}{\rightleftarrows} \mathrm{fin}_{\Q,\mathrm{nil}}\text{-}\mathrm{Ho}(\mathsf{Top}) 
\end{equation*}
where $\mathrm{fin}_\Q\text{-}\mathrm{Ho}(\mathsf{CDGA})$ is the full subcategory of $\mathrm{Ho}(\mathsf{CDGA})$ whose objects are quasi-isomorphic to a minimal algebra $\Lambda V$ with a generating graded vector space that is finite dimensional in each degree.
On the other hand, $\mathrm{fin}_{\Q,\mathrm{nil}}\text{-}\mathrm{Ho}(\mathsf{Top})$ is the full subcategory of $\mathrm{Ho}(\mathsf{Top})$ whose objects are weakly equivalent to a rational nilpotent space of finite $\Q$-type, that is, its rational homology is finite dimensional in each degree. 
In particular, we have the following consequence, see \cite[Proof 11.9]{Bousfield1976PLdeRham} for second part of the following statement.
\begin{theorem}\label{thm: homotopy classification}
    For every two rational, nilpotent spaces of finite $\Q$-type $X,Y$, we have a bijection of homotopy classes
\begin{equation}
    [X,Y] \cong [\mathsf{M}_Y,\mathsf{M}_X] \quad \text{ given by } \quad [f] \mapsto [\mathsf{M}_f].
\end{equation}
   Furthermore, if the minimal model $\mathsf{M}_Y = \Lambda V_Y$ is generated by $V_Y$, then there is a natural bijection $\pi_k(Y)_\Q \cong \mathrm{Hom}(V^k_Y,\Q)$, which is an isomorphism whenever the domain is abelian.
\end{theorem}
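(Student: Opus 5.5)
The first bijection is obtained by passing through the homotopy categories via the Bousfield–Gugenheim Quillen adjunction recalled above. Since $X$ and $Y$ are rational, nilpotent and of finite $\Q$-type, they are objects of $\mathrm{fin}_{\Q,\mathrm{nil}}\text{-}\mathrm{Ho}(\mathsf{Top})$, and the equivalence of categories with $\mathrm{fin}_\Q\text{-}\mathrm{Ho}(\mathsf{CDGA})$ gives a bijection $\mathrm{Ho}(\mathsf{Top})(X,Y) \cong \mathrm{Ho}(\mathsf{CDGA})(\mathsf{M}_Y,\mathsf{M}_X)$, since the functor $\mathsf{M}_{(\placeholder)}$ is contravariant. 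Two identifications then finish this part. On the topological side, morphisms in $\mathrm{Ho}(\mathsf{Top})$ are weak homotopy classes, and these agree with honest homotopy classes $[X,Y]$ once we replace $X$ by a CW approximation; I will tacitly assume $X$ has the homotopy type of a CW complex. On the algebraic side, $\mathsf{M}_Y$ is a Sullivan algebra and hence cofibrant, while every cdga is fibrant because fibrations are the surjections. Morphisms in $\mathrm{Ho}(\mathsf{CDGA})$ out of a cofibrant object into a fibrant one are therefore the quotient of $\mathrm{Hom}_{dga}(\mathsf{M}_Y,\mathsf{M}_X)$ by the right homotopy relation. The path object $I\otimes \mathsf{M}_X$, with the two maps $\ev_0,\ev_1$, realises exactly the homotopy relation defined above, so this quotient is $[\mathsf{M}_Y,\mathsf{M}_X]$. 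Finally, the map $[f]\mapsto[\mathsf{M}_f]$ is the functor on morphisms: by construction $\mathsf{M}_f$ is the unique homotopy class of lifts with $m_X\circ\mathsf{M}_f\simeq \Omega_{PL}(f)\circ m_Y$, and Whitehead's theorem (postcomposition with the quasi-isomorphism $m_X$ is a bijection on $[\mathsf{M}_Y,\placeholder]$) shows this class is well defined and compatible with the derived functor.

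For the second statement, I apply the first part with $X=S^k$, whose rationalisation is a rational space of finite type, and use \cref{thm: localisation of mapping spaces} to pass freely between $Y$ and $Y_\Q$. The minimal model of $S^k_\Q$ is $\Lambda(e_k)$ for $k$ odd and $\Lambda(e_k,e_{2k-1};\,de_{2k-1}=e_k^2)$ for $k$ even. A dga map $\varphi\colon\Lambda V_Y\to \mathsf{M}_{S^k}$ is determined by its values on generators. By minimality, $\varphi$ sends decomposables to decomposables, so composing with projection onto indecomposables of degree $k$ gives a linear map $\zeta(\varphi)\colon V^k_Y\to\Q e_k\cong\Q$.

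I would then show three things about $\zeta$. First, $\zeta$ is constant on homotopy classes: a homotopy $\mathsf{M}_Y\to I\otimes \mathsf{M}_{S^k}$ changes the degree-$k$ linear part only by $d$ of something, and nothing in $\mathsf{M}_{S^k}$ hits $e_k$ linearly. Second, $\zeta$ is surjective: for $\lambda\in\mathrm{Hom}(V^k_Y,\Q)$, set $\varphi(v)=\lambda(v)e_k$ on $V^k$ and $\varphi=0$ on all other generators, except, for $k$ even, on those $w\in V^{2k-1}$ with $dw$ containing a quadratic term in $V^k$, which are sent to the matching multiple of $e_{2k-1}$. Compatibility with $d$ follows because $dV\subseteq\Lambda^{\geq2}V$. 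Third, $\zeta$ is injective on homotopy classes. This is the main obstacle: one has to build a homotopy by induction over a filtration of $V$, which is standard obstruction theory, and it becomes delicate in the even case $k$, where the $e_{2k-1}$-component is not obviously forced. If that direct argument bogs down, I would cite \cite[Proof 11.9]{Bousfield1976PLdeRham} for this step.

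It remains to check that the resulting bijection $\pi_k(Y)_\Q\cong\mathrm{Hom}(V^k_Y,\Q)$ is natural and additive when $\pi_k$ is abelian. Addition in $\pi_k$ is induced by the pinch map $S^k\to S^k\vee S^k$. The model of the wedge is the fibre product of the two models, and its linear part in degree $k$ is the diagonal, so addition corresponds to adding the linear parts. This makes the bijection a group isomorphism.
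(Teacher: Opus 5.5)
Your first part is the paper's argument: the bijection is read off from the Bousfield--Gugenheim equivalence. Your identification of $\mathrm{Ho}(\mathsf{CDGA})(\mathsf{M}_Y,\mathsf{M}_X)$ with $[\mathsf{M}_Y,\mathsf{M}_X]$ through the path object $\Lambda(t,dt)\otimes\mathsf{M}_X$ is fine. For the second part the paper only cites the proof of 11.9 in Bousfield--Gugenheim, and your direct replacement has a genuine gap. The first part is about \emph{free} homotopy classes, so with $X=S^k_\Q$ it computes $[S^k,Y]=\pi_k(Y)/\pi_1(Y)$ (conjugacy classes if $k=1$), not $\pi_k(Y)$. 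For nilpotent $Y$ with non-trivial $\pi_1$-action these differ, and accordingly your claim that $\zeta$ is constant on homotopy classes is false. A homotopy $H$ changes $\varphi(v)$ by the integral over $[0,1]$ of the $dt$-component of $H(dv)$ \emph{plus} an exact term, and you only accounted for the latter. If $x\in V^1$ and $H(x)$ has a component $dt\otimes 1$, then $H(x)H(y)$ with $y\in V^k$ contributes $dt\otimes e_k$. For instance, take $\Lambda(x_1,y_k,z_k)$ with $dz_k=x_1y_k$ and $k\geq 2$. The assignment $H(x_1)=dt\otimes 1$, $H(y_k)=1\otimes e_k$, $H(z_k)=t\otimes e_k$ is a homotopy between two maps whose linear parts on $(y_k,z_k)$ are $(1,0)$ and $(1,1)$. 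What is missing is the pointed version of the correspondence: augmented maps, and homotopies with $(\mathrm{id}\otimes\varepsilon)\circ H$ trivial. This kills the $dt\otimes 1$ components and makes $\zeta$ well defined; for simply connected $Y$ your reduction is harmless. Your additivity argument needs the same based setting. Moreover, for $k$ even the pinch map is not a dga map $\mathsf{M}_{S^k}\times_\Q\mathsf{M}_{S^k}\to\mathsf{M}_{S^k}$, since $e_ke_k'=0$ in the fibre product while $e_k^2\neq 0$, so you need a Sullivan model of the wedge.

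The other two steps also have problems. Your explicit preimage in the surjectivity step is not a dga map for $k$ even. The reason is that $\varphi(du)$ can be a non-zero multiple of $e_k^j$, $j\geq 3$, for generators $u$ outside $V^k\oplus V^{2k-1}$. For $Y=\HP^2_\Q$ and $k=4$, the assignment $a_4\mapsto e_4$ forces $b_{11}\mapsto e_4e_7$ (compare the model $b_{11}\mapsto a_4b_7$ of $\HP^1\hookrightarrow\HP^2$), whereas your recipe sends $b_{11}$ to $0$. The correct argument extends $\varphi$ generator by generator, using that $\varphi(du)$ is a cocycle of degree $>k$ and $H^{>k}(\mathsf{M}_{S^k})=0$. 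Injectivity, which you leave open, is the real content. For $k$ even the values on generators of degree $jk$, $j\geq 2$, are not determined by $\zeta$, so one has to build pointed homotopies inductively. The obstruction to extending over a generator $u$ lies in $H^{|u|}(\mathsf{M}_{S^k})$, so it can only be non-zero for $|u|=k$; there, by pointedness, it is exactly the difference of the linear parts. If you cite Bousfield--Gugenheim for this step, your treatment of the second part reduces to the paper's citation.
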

For a dga-homomorphism $\varphi\colon A \rightarrow B$ between two cdga and $r \in \Z$, denote by $\Der_r^\varphi(A,B)$ the vector space of all $\varphi$-derivations that lower the degree by $r$, that is, the vector space of all linear maps $\theta \colon A \rightarrow B$ that lower the degree by $r$ and satisfy $\theta(a_1 \cdot a_2) = \theta(a_1)\varphi(a_2) + (-1)^{r\cdot\mathrm{deg}(a_2)}\varphi(a_1)\cdot \theta(a_2)$.
It is straightforward to verify that, for $\theta \in \Der_r^\varphi(A,B)$, the linear map $\delta_r(\theta) := d\theta - (-1)^r \theta d$ is a $\varphi$-derivation lowering the degree by $1$ and that $\delta_{r-1} \circ \delta_r =0$.
In particular, $\delta$ turns $\Der^\varphi(A,B) = \bigoplus_{r\in \Z} \Der_r^\varphi(A,B)$ into a homologically graded chain complex.
In \cite{Sullivan1977Infinitesimal}, Sullivan provided an algebraic model for the rational homotopy groups of the homotopy automorphisms $\mathrm{hAut}(X)$ in terms of derivations on the minimal model of $X$.
The next result, proved in \cite{Buijs2008RationalLieAlgebraFunction}, generalises the one of Sullivan \cite{Sullivan1977Infinitesimal} to arbitrary mapping spaces.
\begin{theorem}[Theorem 1 in \cite{Buijs2008RationalLieAlgebraFunction}]\label{Thm: Mapping Spaces via derivations}
    Let $X,Y$ be nilpotent CW complexes with $X$ finite and $Y$ of finite $\Q$-type.
    Then there are isomorphisms of $\Q$-vector spaces
    \begin{align*}
        \pi_r(\mathrm{C}(X,Y),f)_\Q &\cong \Der_r^{\mathsf{M}(f)}( \mathsf{M}_Y, \mathsf{M}_X , \delta), \\
        \Gamma\pi_1(\mathrm{C}(X,Y),f)_\Q &\cong \Der_1^{\mathsf{M}(f)}(\mathsf{M}_Y,\mathsf{M}_X,\delta),
    \end{align*}
    where $\Gamma\pi_1(\mathrm{C}(X,Y),f)_\Q$ denotes the rational vector space $\bigoplus_j \Gamma_j/\Gamma_{j+1} \otimes \Q$ associated to the lower central series $\Gamma_0 = \pi_1(\mathrm{C}(X,Y),f) \supseteq \Gamma_1 \supseteq \dots \supseteq \Gamma_n = \{1\}$ of the nilpotent group $\pi_1(\mathrm{C}(X,Y),f)$.
\end{theorem}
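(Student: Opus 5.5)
The statement to be proved is the result of \cite{Buijs2008RationalLieAlgebraFunction} quoted as Theorem~\ref{Thm: Mapping Spaces via derivations}; here is how I would prove it. I would first use Theorem~\ref{thm: localisation of mapping spaces} to replace $Y$ by $Y_\Q$. This does not change the rationalised homotopy groups of $\mathrm{C}(X,Y;f)$, so I may assume $Y$ is rational, and then $\pi_r(\mathrm{C}(X,Y),f)$ is already a $\Q$-vector space for $r\ge 2$. The next step is to identify $\pi_r(\mathrm{C}(X,Y),f)$ with homotopy classes of maps $S^r\times X\to Y$ that restrict to $f$ on $\ast\times X$. By adjunction this is the set of pointed maps $S^r\to \mathrm{C}(X,Y)$, taken relative to $X$. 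By Theorem~\ref{thm: homotopy classification}, and its relative version available through the Quillen adjunction, these classes correspond to homotopy classes of dga-homomorphisms
\begin{equation*}
  \Phi\colon \mathsf{M}_Y\rightarrow H(S^r)\otimes \mathsf{M}_X=\Lambda[e_r]/(e_r^2)\otimes\mathsf{M}_X
\end{equation*}
that reduce to $\mathsf{M}(f)$ modulo $e_r$. Here I use that $H(S^r)=\Q\oplus\Q e_r$ is a model of $S^r$, which is formal.

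Any such $\Phi$ can be written as $\Phi=\mathsf{M}(f)+e_r\otimes\theta$. Because $e_r^2=0$, multiplicativity of $\Phi$ is exactly the statement that $\theta$ is an $\mathsf{M}(f)$-derivation lowering degree by $r$, up to the sign conventions in the definition of $\Der_r$. Compatibility with the differentials is exactly $\delta_r\theta=0$, since $de_r=0$. So the relative maps correspond bijectively to cycles of $\Der_r^{\mathsf{M}(f)}(\mathsf{M}_Y,\mathsf{M}_X)$.

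The core step is to show that homotopy of such $\Phi$, relative to $\mathsf{M}(f)$, is the same as $\theta$ and $\theta'$ differing by $\delta_{r+1}\eta$. For one direction, given $\eta\in\Der_{r+1}^{\mathsf{M}(f)}$, the map $\mathsf{M}(f)+e_r\otimes(\theta+t\,\delta\eta)+e_r\,dt\otimes\eta$ is an explicit homotopy into $I\otimes H(S^r)\otimes\mathsf{M}_X$, after fixing signs. For the converse, I would take a homotopy $H$ into $I\otimes H(S^r)\otimes\mathsf{M}_X$ that is constant modulo $e_r$, and integrate its $dt$-component over $[0,1]$. This produces an $\eta$ with $\theta_1-\theta_0=\pm\delta\eta$, which is the standard "integration along the interval" argument.

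I expect the main obstacle to be this converse, together with the relative nature of the homotopies. The homotopy must stay fixed modulo $e_r$, and one needs the lifting property of Sullivan algebras, as cofibrant objects, to normalise an arbitrary homotopy to one of this form. I would handle it by working one generator at a time along the Sullivan filtration $V(k)$, using the nilpotency condition. Along the way one also checks that the resulting bijection is additive, either because the pinch map $S^r\to S^r\vee S^r$ is modelled by addition of derivations, or simply by linearity.

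For $r=1$, the group $\pi_1$ need not be abelian. There I would expect the same construction to identify only the associated graded of the lower central series. The reason is that $\Phi\mapsto\theta$ is additive only up to higher commutator terms, coming from the Baker--Campbell--Hausdorff structure on the Mal'cev completion. Hence the statement for $\Gamma\pi_1$ is phrased via $\bigoplus_j\Gamma_j/\Gamma_{j+1}\otimes\Q$, a filtration argument on $\Der_1$ that I would treat last.
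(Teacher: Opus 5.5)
The paper gives no proof of this statement. It is quoted from Buijs--Murillo, whose argument uses the Brown--Szczarba (Haefliger) Sullivan model of the component $\mathrm{C}(X,Y;f)$ and reads off $\pi_\ast$ from the linear part of its differential, which is dual to a complex of derivations. Your adjoint route is a reasonable alternative. The paper itself uses this adjoint description in its main proofs, but only to detect non-zero elements, where free non-homotopy suffices. The algebraic core of your argument is correct:
\begin{itemize}
\item $\mathsf{M}(f)+e_r\otimes\theta$ is a dga-homomorphism iff $\theta$ is a $\delta$-cycle in $\Der_r$;
\item your explicit homotopy realises boundaries;
\item integrating the $dt$-component of a homotopy that is constant modulo $e_r$ gives $\theta_1-\theta_0=\pm\delta\eta$.
\end{itemize}
The last step works generator by generator and needs no induction over $V(k)$. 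What you obtain is $H_r(\Der,\delta)$, which is the intended reading of the statement.

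The genuine gap is the step you treat as available: identifying $\pi_r(\mathrm{C}(X,Y_\Q),f)$ with such $\Phi$ modulo homotopies \emph{constant modulo $e_r$}. Theorem~\ref{thm: homotopy classification} only classifies free classes $[S^r\times X,Y_\Q]$. Two elements of $\pi_r(\mathrm{C}(X,Y),f)$ have freely homotopic adjoints iff they lie in the same orbit of the $\pi_1(\mathrm{C}(X,Y),f)$-action, and this action is in general non-trivial even rationally, since these components are only nilpotent. So your plan to normalise an arbitrary homotopy by lifting cannot work: the obstruction is the self-homotopy $(\varepsilon\otimes\mathrm{id})\circ H$ of $\mathsf{M}(f)$, i.e.\ a loop in the mapping space.

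Concretely, take $X=S^1$, $Y=S^2$, $f$ constant, $r=2$, with $\mathsf{M}_{S^2}=\Lambda(a_2,b_3)$, $db_3=a_2^2$, and $\mathsf{M}_{S^1}=\Lambda(u_1)$. Then $\delta=0$, and $H_2(\Der)\cong\Q^2$ is spanned by $1\otimes a_2^\vee$ and $u_1\otimes b_3^\vee$. Set $\Phi_0(a_2)=e$, $\Phi_0(b_3)=0$ and $\Phi_1(a_2)=e$, $\Phi_1(b_3)=c\,eu_1$. These are joined by the homotopy $H(a_2)=e+\tfrac{c}{2}\,dt\,u_1$, $H(b_3)=c\,t\,eu_1$. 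Yet for $c\neq0$ they represent different elements of $\pi_2(\mathrm{C}(S^1,S^2))_\Q$. No homotopy constant modulo $e$ joins them: such a homotopy has no $dt\,u_1$-term in $H(a_2)$, which forces the $eu_1$-coefficient of $H(b_3)$ to be constant in $t$.

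What you need is a genuinely fibrewise classification. For instance, use the simplicial mapping spaces $n\mapsto\mathrm{Hom}_{dga}(\mathsf{M}_Y,B\otimes\nabla_n)$, with $\nabla_n$ the polynomial forms on the $n$-simplex. The surjection $\varepsilon\otimes\mathrm{id}\colon H(S^r)\otimes\mathsf{M}_X\to\mathsf{M}_X$ then induces a Kan fibration modelling the restriction fibration $\mathrm{C}(S^r\times X,Y_\Q)\to\mathrm{C}(X,Y_\Q)$. The $\pi_0$ of its fibre over $\mathsf{M}(f)$ is exactly your set of $\Phi$ modulo homotopies constant modulo $e_r$. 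Only then does your integration argument settle $r\geq2$.

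Two further points are only announced. Additivity does not follow ``by linearity''; you must show that the pinch map corresponds to addition of derivations, which again presupposes the fibrewise identification. For $r=1$, a bijection of sets $\pi_1(\mathrm{C}(X,Y_\Q),f)\leftrightarrow H_1(\Der,\delta)$ says nothing about $\dim\bigoplus_j\Gamma_j/\Gamma_{j+1}\otimes\Q$. You need either to identify the group law with the Baker--Campbell--Hausdorff product on $H_1$, or to induct along the Postnikov tower of $Y_\Q$. In the latter approach, the principal fibrations of mapping spaces give central extensions of $\pi_1$, whose ranks you compare with the long exact sequences of derivation complexes.
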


\subsection{Examples of Rational Models}\label{subsection: examples of models}

We work out the rational models for the spaces we consider Section \ref{Section: Robustness}.
Unless stated otherwise, rational coefficients are understood.
\begin{example}\label{Exmpl: Odd Spheres}
    The minimal model of $S^n$ with $n = 2k+1$ is given by $\mathsf{M}_{S^n} = \Lambda[a_n \, | da_n = 0]$.
    It is easy to see that a map $m \colon \mathsf{M}_{S^n} \rightarrow \Omega_{PL}(S^n)$ that sends $a_n$ to a generator of the volume form $\vol_{S^n} = H^n(S^n) = H^n(\Omega_{PL}(S^n),d)$ is a quasi-isomorphism.
\end{example}
\begin{example}\label{Exmpl: Even Dimensional Spheres}
    The minimal model of $S^n$ with $n = 2k$ is given by $\mathsf{M}_{S^n} = \Lambda[a_n, b_{2n-1} \, | db_{2n-1} = a_n^2]$.
    A model map is inductively constructed as follows: Send $a_n$ to a generator $\omega_n$ of the volume form $\vol_{S^n} \in H^n(S^n)$.
    We know that $\omega^2_n$ must be exact, so we send $b_{2n-1}$ to a form $\eta$ with $d\eta = \omega_n^2$.
    By construction, the map $m$ extends to a quasi-isomorphism $m\colon \mathsf{M}_{S^n} \rightarrow \Omega_{PL}(S^n)$.
\end{example}
\begin{example}\label{Exmpl: Complex Projective Spaces}
    The cohomology ring of $\CP^n$ is well known: $H(\CP^n) = \Q[a_2]/\langle a_2^{n+1} \rangle$.
    Therefore, the minimal model of $\CP^n$ is given by $\mathsf{M}_{\CP^n} = \Lambda[a_2,b_{2n+1} \, | \, db_{2n+1} = a_{2}^{n+1}]$.
    A model map $m \colon \mathsf{M}_{\CP^n} \rightarrow \Omega_{PL}(\CP^n)$ sends $a_2$ to a generator $\alpha_2$ of $a_2 \in H^2(\CP^n)$ and $b_{2n+1}$ to a primitive of $\alpha_{2}^{n+1}$.

    Since the $\CP^n$ can be given a CW-structure such that the $2m$-skeleton $(\CP^n)^{(2m)}$ can be identified with $\CP^m$ it follows that the inclusion $\CP^m \hookrightarrow \CP^n$ induces the canonical projection $\Q[a_2]/\langle a_2^{n+1}\rangle \rightarrow \Q[a_2]/\langle a_2^{m+1}\rangle$ on cohomology.
    It is not hard to see that there is one and only one dga-homomorphism $\mathsf{M}_{\CP^n} \rightarrow \mathsf{M}_{\CP^m}$ that induces the aforementioned ring homomorphism on cohomology, namely 
    \begin{align*}
        \mathsf{M}_{\CP^n} = \Lambda[a_2,b_{2n+1} \, | \, db_{2n+1} = a_2^{n+1}] &\rightarrow \Lambda[a_2,b_{2m+1}\, | \, db_{2m+1} = a_2^{m+1}] = \mathsf{M}_{\CP^m}, \\
        a_2 &\mapsto a_2, \quad b_{2n+1} \mapsto a_{2}^{n-m}b_{2m+1}.
    \end{align*}
\end{example}
\begin{example}\label{Exmpl: Quaternionic Porjective Spaces}
    Completely analogously to the previous example, we derive
    \begin{align*}
        \mathsf{M}_{\HP^n} = \Lambda[a_4,b_{4n+3} \, | \, db_{4n+3} = a_{4}^{n+1}] &\rightarrow  \Lambda[a_4,b_{4n+3} \, | \, db_{4m+3} = a_{4}^{m+1}] = \mathsf{M}_{\HP^m},  \\
        a_4 &\mapsto a_4, \quad b_{4n+3} \mapsto a_4^{n-m}b_{4m+3}.
    \end{align*}
\end{example}
\begin{example}\label{Exmpl: Cayley Plane}
    It is well known that $\OP^1 \cong S^8$ and that $\OP^2$ has a CW-structure of the form $\{\ast\} = \OP^0 \subset \OP^1 = (\OP^2)^{(8)} \subseteq \OP^2 = (\OP^2)^{(16)}$. 
    In particular, its cohomology ring is given by $H(\OP^n) = \Q[a_8]/\langle a_8^{n+1}\rangle$ for $n \leq 2$.
    Arguing as in Example \ref{Exmpl: Complex Projective Spaces}, we deduce that the inclusion ${\OP^1} \hookrightarrow \OP^2$ is modelled by
    \begin{align*}
        \mathsf{M}_{\OP^2} = \Lambda[a_8,b_{23} \, | \, db_{23} = a_2^3] &\rightarrow \Lambda[a_8,b_{15} \, | \, db_{15} = a_2^2] = \mathsf{M}_{\OP^1} \\
        a_8 &\mapsto a_8, \quad b_{23} \mapsto a_8b_{15}.
    \end{align*}
\end{example}

By a theorem of Hopf the rational cohomology of a compact Lie group is a free exterior algebra, see \cite[Theorem 1.34]{Felix2008AlgebraicModelsGeometry}, so its cohomology group agrees with its minimal model.
To set up our notational conventions, we recall the cohomology rings of the Lie groups of our interests, see Example 3.37, and Example 3.40-3.42 in  \cite{Felix2008AlgebraicModelsGeometry} for the equivalent statements of their classifying spaces.
\begin{example}\label{Exmpl: Minimal model of Lie groups}
 $ $
    \begin{itemize}
        \item[(i)] If $n = 2k$ is even, then $\mathsf{M}_{\SOrth(n)} = H(\SOrth(n)) = \Lambda[\pi_{3},\dots,\pi_{4k-5},\varepsilon_{2k-1}]$. 
        Note in particular that $\mathsf{M}_{\SOrth(2)} = \Lambda[\varepsilon_1]$.
        \item[(ii)] If $n=2k+1$ is odd, then $\mathsf{M}_{\SOrth(n)} = H(\SOrth(n)) = \Lambda[\pi_{3},\dots,\pi_{4k-1}]$ 
        \item[(iii)] For all $n \geq 1$ we have $\mathsf{M}_{\U(n)} = H(\U(n)) =  \Lambda [\gamma_1,\dots,\gamma_{2n-1}]$.
        \item[(iv)] For all $n \geq 1$ we have $\mathsf{M}_{\mathrm{Sp}(n)} = H(\Sp(n)) = \Lambda[q_3,\dots, q_{4n-3}]$.
        \item[(v)] Since $\Spin(9) \rightarrow \mathrm{SO}(9)$ is a two-sheeted cover, the two groups are rational homotopy equivalent and we have $\mathsf{M}_{\Spin(9)} = \Lambda[\pi_3,\pi_7,\pi_{11},\pi_{15}]$. 
        Similarly we deduce that $\mathsf{M}_{\Spin(7)} = \Lambda[\pi_3,\pi_7,\pi_{11}]$.
        \item[(vi)] Borel \cite{Borel1953CohomologyCompactLie} computed the integral cohomology ring of $\mathrm{F}_4$. 
        By tensoring with the rational numbers, we deduce that $\mathsf{M}_{\Ffour} = H(\Ffour) =  \Lambda[ \xi_3,\xi_{11},\xi_{15},\xi_{23}]$.
    \end{itemize}
    It is well known that the inclusions of these groups into each other induce the obvious homomorphisms between their homology groups, for example the inclusion $\SOrth(2k-1) \rightarrow \SOrth(2k)$ induces the algebra homomorphism $\Lambda[\pi_3,\dots,\pi_{4k-5},\varepsilon_k] \rightarrow \Lambda[\pi_3,\dots,\pi_{4k-5}]$ that sends $\pi_j$ to $\pi_j$ and $\varepsilon_{2k-1}$ to zero.
    Moreover, the homomorphism induced by the inclusion $\Spin(9) \rightarrow \Ffour$ sends $\xi_{23}$ to zero and $\xi_j \mapsto \pi_j$ for $j=3,11,15$.
    
\end{example}
%
Next we would like to understand the rational model of the quotient maps.
We start with a technical lemma that classifies homotopies between certain Sullivan algebras.
\begin{lemma}\label{lem: rigid homotopy classes}
    For $k\geq 1$ and $n\geq 2$, consider a minimal Sullivan algebra of the form $(\Lambda V,d)= \Lambda[a_{2k},b_{2nk-1} \, | \, db_{2kn-1} = a_{2k}^n]$. 
    Let $(A,0)$ be a cdga with trivial differential that are generated by elements of odd degree.
    If $n\geq 3$,  then each homotopy class $(\Lambda V,d) \rightarrow (A,0)$ has a unique representative.
    If $n=2$, then every dga-homomorphism $\varphi \colon (\Lambda V,d) \rightarrow \mathsf{A}$ with $\varphi(a_{2k})=0$ is the unique representative of its homotopy class.
\end{lemma}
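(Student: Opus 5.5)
Suppose $H\colon (\Lambda V,d)\rightarrow I\otimes (A,0)$ is a homotopy between $\varphi_0=\ev_0\circ H$ and $\varphi_1=\ev_1\circ H$. The plan is to write $H$ out on the two generators and show that the endpoint values coincide. Since $A$ has trivial differential, the differential on $I\otimes A$ is simply $d(t^i\otimes x)=it^{i-1}dt\otimes x$. Because $A$ is generated by odd-degree elements, it is a quotient of an exterior algebra; in particular every element of $A$ of positive degree is nilpotent, with $x^2=0$ for odd $x$. Write
\begin{equation*}
    H(a_{2k}) = \sum_i t^i\otimes \alpha_i + \sum_i t^i dt\otimes \beta_i, \qquad H(b_{2nk-1}) = \sum_i t^i\otimes \gamma_i + \sum_i t^i dt \otimes \delta_i ,
\end{equation*}
where the $\alpha_i$ have degree $2k$, the $\beta_i$ degree $2k-1$, the $\gamma_i$ degree $2nk-1$ and the $\delta_i$ degree $2nk-2$. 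We have $\varphi_1(v)-\varphi_0(v)=\int_0^1 H(v)$, where the integral picks out the $dt$-part and integrates in $t$. Since $H$ is a chain map, $H(dv)=dH(v)$.

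First I will treat $a_{2k}$. The condition $dH(a_{2k})=H(da_{2k})=0$ says $\sum_i i\alpha_i t^{i-1}dt=0$, so $\alpha_i=0$ for $i\geq 1$. Hence $H(a_{2k})=1\otimes\alpha_0+\beta(t)dt$. Every element of $A$ of the form $\beta_i$ that is \emph{not} hit by a coboundary obstructs nothing here, but the difference $\varphi_1(a)-\varphi_0(a)=\int\beta$ would be zero only after a further argument. Actually $\ev_j$ kills $dt$, so $\varphi_0(a)=\varphi_1(a)=\alpha_0$ directly. Thus the two endpoints always agree on $a_{2k}$; set $\alpha:=\alpha_0$.

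Next I will treat $b_{2nk-1}$. The identity $dH(b)=H(a)^n$ reads
\begin{equation*}
    \Bigl(\sum_i i\gamma_i t^{i-1}\Bigr)dt = \bigl(\alpha+\beta(t)dt\bigr)^n = \alpha^n + n\,\alpha^{n-1}\beta(t)\,dt ,
\end{equation*}
using $dt^2=0$ and the fact that $\alpha$ has even degree, so it commutes with everything. Comparing the parts without $dt$ gives $\alpha^n=0$. Comparing the $dt$-parts gives $\gamma'(t)=n\alpha^{n-1}\beta(t)$. Therefore
\begin{equation*}
    \varphi_1(b)-\varphi_0(b)=\gamma(1)-\gamma(0)=n\,\alpha^{n-1}\int_0^1\beta(t)\,dt .
\end{equation*}
So the whole lemma reduces to showing that $\alpha^{n-1}\cdot(\text{element of degree }2k-1)=0$ in $A$.

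This last step is the main obstacle, since it is where $n\geq 3$ has to be used. For $n=2$ the required vanishing is just the hypothesis $\varphi(a_{2k})=\alpha=0$, which settles that case. For $n\geq 3$, I would argue from $\alpha^n=0$ and the odd generation of $A$. Write $\alpha$ as a sum of products of pairs of odd elements in the generators, i.e.\ a quadratic-type element of the exterior-like algebra. Powers of such an element behave like powers of a symplectic form, and the aim is to show that $\alpha^{n-1}$ annihilates every odd element of degree $2k-1$. The first thing I would try is to reduce to the case where $A$ is a free exterior algebra, or at least where the relevant relation is $\alpha^{n-1}\beta=0$, and then do a direct degree and weight count in the exterior generators.

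I am genuinely uncertain whether this holds for an arbitrary oddly generated quotient. In the intended applications, $A=H(G)\otimes H(\text{space})$ or something similar, and this vanishing can be checked directly. If the general claim fails, I would restrict the argument to the exterior-algebra case needed later, where I expect the computation to be a direct check.
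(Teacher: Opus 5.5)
Your computation follows the same route as the paper's proof: evaluate $H$ on the two generators, use $dH(a_{2k})=0$ to see that the endpoints agree on $a_{2k}$, then compare $dt$-parts of $dH(b_{2kn-1})=H(a_{2k})^n$. It is more careful than the paper, which only allows $H$ to be affine in $t$. There is one small slip: the endpoint difference is $\int_0^1 dH(v)$, not $\int_0^1 H(v)$, but you use the right version when you integrate $\gamma'$.

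Your reduction is in fact sharp. A homotopy changes $\varphi_0$ only by $b_{2kn-1}\mapsto\varphi_0(b_{2kn-1})+n\alpha^{n-1}z$ with $z=\int_0^1\beta\in A^{2k-1}$. Conversely, for any $\alpha\in A^{2k}$ with $\alpha^n=0$ and any $z\in A^{2k-1}$,
\[
H(a_{2k})=1\otimes\alpha+dt\otimes z,\qquad H(b_{2kn-1})=1\otimes\varphi_0(b_{2kn-1})+t\otimes n\alpha^{n-1}z
\]
is a dga-homomorphism. So the $n=2$ case is done. More generally, every $\varphi$ with $\varphi(a_{2k})=0$ is alone in its homotopy class for all $n\geq 2$.

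The step you flag for $n\geq 3$ cannot be supplied, because it is false, and so is the unrestricted $n\geq 3$ claim of the lemma. Your symplectic-form heuristic produces a counterexample rather than a proof. Take $k=1$, $n=3$, $A=\Lambda[u_1,\dots,u_5]$ with all $|u_i|=1$, $\alpha=u_1u_2+u_3u_4$ and $z=u_5$. This $A$ is a free exterior algebra, namely $H(T^5)$, so restricting to exterior algebras does not help. Then $\alpha^3=0$ but $3\alpha^2z=6u_1u_2u_3u_4u_5\neq0$. Hence the homotopy above joins the two distinct homomorphisms $(a_2,b_5)\mapsto(\alpha,0)$ and $(a_2,b_5)\mapsto(\alpha,6u_1u_2u_3u_4u_5)$.

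The same construction works for any $k\geq1$, $n\geq3$. Take $A=\Lambda[u_1,\dots,u_{n-1},v_1,\dots,v_{n-1},z]$ with $|u_i|=1$ and $|v_i|=|z|=2k-1$, and set $\alpha=\sum_{i=1}^{n-1}u_iv_i$. Then $\alpha^n=0$ but $\alpha^{n-1}z=(n-1)!\,u_1v_1\cdots u_{n-1}v_{n-1}z\neq0$.

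The paper's proof has the same hole. It concludes $y_{2kn-1}=n\,x_{2k}^{n-1}z_{2k}=0$ for $n\geq3$ from odd generation of $A$, which is unjustified (even-degree elements need not square to zero).

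What is true is exactly what your argument proves. Uniqueness holds when $\varphi(a_{2k})=0$, for every $n\geq2$. More generally it holds whenever $\alpha^{n-1}A^{2k-1}=0$ for all $\alpha\in A^{2k}$ with $\alpha^n=0$, e.g.\ when $A^{2k}=0$. That is all the later applications use: in Lemmas \ref{lem: Sphere projection}, \ref{lem: CP porojection} and \ref{lem: HP and OP projection}, the homomorphisms being compared send $a_{2k}$ to $0$. You should state and prove that version, and record that the unrestricted $n\geq3$ claim fails.
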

\begin{proof}
    Each homotopy $H$ is uniquely determined its image of the generators.
    Degree reasons imply
    \begin{align*}
        H(a_{2k}) ={}& 1 \otimes x_{2k} + t \otimes y_{2k} + dt \otimes z_{2k}, \qquad \text{ and} \\
        H(b_{2kn-1}) ={}& 1 \otimes x_{2kn-1} + t\otimes y_{2kn-1} + dt\otimes z_{2kn-2}.
    \end{align*}
    Since $da_{2k} = 0$ and the differential on the target vanishes, we deduce that $y_{2k} = 0$.
    The algebra homomorphism $H$ is therefore a dga-homomorphism if and only if
    \begin{align*}
           &dH(b_{2kn-1}) = dt \otimes y_{2kn-1}\\
        ={}&  H(db_{2kn-1}) =  (1 \otimes x_{2k} + dt \otimes z_{2k})^n = \sum_{\alpha=0}^n \binom{n}{\alpha} (dt)^{n-\alpha} \otimes x_{2k}^\alpha z_{2k}^{n-\alpha}= 1 \otimes x_{2k}^n + n \cdot dt \otimes x_{2k} ^{n-1} z_{2k}. 
    \end{align*}
    Since $\mathsf{A}$ is generated by odd elements, $x_{2k}^{n} \neq 0$ if and only if $n\geq 2$. 

    If $n\geq 3$, we deduce that $y_{2kn-1} = 0$.
    If $n=2$, then the additional and $0 = \mathrm{ev}_0 \circ H (a_{2k}) = x_{2k}$ also forces $y_{2kn-1}$ to vanish. 
    We conclude that $\mathrm{ev}_0 \circ H = \mathrm{ev}_1 \circ H$, as claimed.
\end{proof}
\begin{lemma}\label{lem: Sphere projection}
    The map $\mathrm{ev}_{e_1} \colon \SOrth(n+1) \rightarrow S^n$ given by evaluation at $e_1 = (1,0,\dots,0)$ has the following rational models:
    \begin{itemize}
        \item[(i)] If $n = 2k$, then $\mathsf{M}(\ev_{e_1})\colon \mathsf{M}_{S^n} \rightarrow \mathsf{M}_{\SOrth(n+1)}$ is induced by $a_n \mapsto 0$ and $b_{2n-1} \mapsto \pi_{4k-1}$.
        \item[(ii)] If $n = 2k+1$, then $\mathsf{M}(\ev_{e_1})\colon \mathsf{M}_{S^n} \rightarrow \mathsf{M}_{\SOrth(n+1)}$ is induced by $a_{2k+1} \mapsto \varepsilon_{2k+1}$.
    \end{itemize}
\end{lemma}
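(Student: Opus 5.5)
We use the fibration $\SOrth(n) \rightarrow \SOrth(n+1) \xrightarrow{\ev_{e_1}} S^n$ and the homotopy classification of \Cref{thm: homotopy classification}. Since $\mathsf{M}_{\SOrth(n+1)} = H(\SOrth(n+1))$ has trivial differential and is generated in odd degrees (\Cref{Exmpl: Minimal model of Lie groups}), a dga-homomorphism out of $\mathsf{M}_{S^n}$ is determined by where it sends the generators, subject only to the relations imposed by the differentials. The plan has two steps. First we determine all possible such homomorphisms by degree considerations. Then we identify which one models $\ev_{e_1}$, using what the fibration does on cohomology or on rational homotopy.

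\textbf{Case (ii), $n=2k+1$.} Here $\mathsf{M}_{S^n}=\Lambda[a_n]$, so $\mathsf{M}(\ev_{e_1})$ is determined by the class $\varphi(a_n)\in H^{2k+1}(\SOrth(2k+2))$. It equals $H^*(\ev_{e_1})$ of the fundamental class.

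To identify this class, we look at the Serre spectral sequence of $\SOrth(2k+1)\rightarrow\SOrth(2k+2)\rightarrow S^{2k+1}$. Restriction to the fibre sends $\pi_j\mapsto\pi_j$ and $\varepsilon_{2k+1}\mapsto 0$ (\Cref{Exmpl: Minimal model of Lie groups}), so $\varepsilon_{2k+1}$ is not a class coming from the fibre. The rational Poincaré series then forces the spectral sequence to collapse, with $H(\SOrth(2k+2))\cong H(S^{2k+1})\otimes H(\SOrth(2k+1))$. Consequently the image of the base class is the (suitably normalised) generator $\varepsilon_{2k+1}$, up to decomposables. Since $H^{2k+1}$ contains products of $\pi$'s only when such degrees sum to $2k+1$, we choose the generator $\varepsilon_{2k+1}$ to be precisely this image. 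This normalisation is permitted because generators are only defined up to decomposables and scalars.

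\textbf{Case (i), $n=2k$.} Here $H^{2k}(\SOrth(2k+1))$ consists of decomposables in odd generators, and $\varphi(a_n)$ must be a cocycle. Because $H^{2k}(S^{2k})\rightarrow H^{2k}(\SOrth(2k+1))$ factors through $\ev_{e_1}$, we argue via the fibration $\SOrth(2k)\rightarrow\SOrth(2k+1)\rightarrow S^{2k}$. On rational homotopy, $\pi_{2k}(S^{2k})_\Q\rightarrow\pi_{2k-1}(\SOrth(2k))_\Q$ is injective: it hits the Euler class direction $\varepsilon_{2k-1}$, which dies in $\SOrth(2k+1)$. Hence $\pi_{2k}(\ev_{e_1})_\Q=0$, while $\pi_{4k-1}(\ev_{e_1})_\Q$ is an isomorphism onto the new generator $\pi_{4k-1}$ of $\SOrth(2k+1)$. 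By the second part of \Cref{thm: homotopy classification}, the linear part of $\mathsf{M}(\ev_{e_1})$ therefore sends $a_n\mapsto 0$ up to decomposables, and $b_{2n-1}\mapsto c\,\pi_{4k-1}$ plus decomposables with $c\neq 0$.

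To remove the decomposable parts, we invoke \Cref{lem: rigid homotopy classes} with $n=2$. We first choose a representative with $\varphi(a_{2k})=0$, which is possible after a homotopy because $\varphi(a_{2k})$ is decomposable. By the lemma, this representative is unique in its homotopy class. We then rescale and redefine the generator $\pi_{4k-1}$ to absorb the scalar $c$ and the decomposables.

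The main obstacle is justifying the passage to $\varphi(a_{2k})=0$. The lemma only gives rigidity after that condition holds, and it is not immediate that a homotopy can kill a decomposable cocycle in $H^{2k}(\SOrth(2k+1))$. We would try a more direct route first: $H^{2k}(\ev_{e_1})=0$, since the composite $S^{2k}\to$ (via the section-free fibration) \dots{} more concretely, the transgression kills the base class. Granted this, $\varphi(a_{2k})$ is an exact, hence zero, element of $(A,0)$, and the lemma applies directly.
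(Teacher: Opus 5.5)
\textbf{Case (ii).} This case is fine, and more detailed than the paper, which just calls it well known. No redefinition of $\varepsilon_{2k+1}$ is actually needed. Since $\mathrm{ev}_{e_1}$ is constant on the fibre $\mathrm{SO}(2k+1)$, the image of the base class lies in the kernel of restriction, and in degree $2k+1$ that kernel is exactly $\Q\,\varepsilon_{2k+1}$.

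\textbf{Case (i): the class $\varphi(a_{2k})$.} There is a genuine gap here, at the step you flag yourself.

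Your first idea fails. Since $a_{2k}$ is closed and $H(\mathrm{SO}(2k+1))$ has zero differential, every homotopy fixes $\varphi(a_{2k})$; this is the $y_{2k}=0$ step in the proof of Lemma~\ref{lem: rigid homotopy classes}. In fact $\varphi(a_{2k})$ is just $H^{2k}(\mathrm{ev}_{e_1})$ of the fundamental class.

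Your rational homotopy computation cannot supply the vanishing either. The statement $\pi_{2k}(\mathrm{ev}_{e_1})_\Q=0$ only sees the linear part, whereas $H^{2k}(\mathrm{SO}(2k+1))$ can be nonzero and purely decomposable (e.g.\ $\pi_3\pi_7$ for $k=5$).

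Your fallback, ``the transgression kills the base class'', is true. It amounts to $\varepsilon_{2k-1}$ transgressing to the Euler class of $TS^{2k}$, which is $\chi(S^{2k})=2$ times the generator. But you only assert it, and the remark about the fibration having no section does not prove it.

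The missing idea, which the paper uses, is elementary. The composite $\mathrm{ev}_{e_1}\circ\incl\colon \mathrm{SO}(2k)\to S^{2k}$ is constant. The restriction $H^{2k}(\mathrm{SO}(2k+1))\to H^{2k}(\mathrm{SO}(2k))$ is injective, because degree-$2k$ classes are products of $\pi_3,\dots,\pi_{4k-5}$, which restrict to the same linearly independent products. Hence $\varphi(a_{2k})=0$.

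\textbf{Case (i): the decomposables in $\varphi(b_{4k-1})$.} The same fibre restriction disposes of these, and here your use of Lemma~\ref{lem: rigid homotopy classes} misses the point. Knowing that $\varphi$ is the unique representative of its class does not remove decomposables, so you end up redefining $\pi_{4k-1}$. That only shows $b_{4k-1}$ goes to \emph{some} exterior generator. It can also conflict with the convention of Example~\ref{Exmpl: Minimal model of Lie groups} that $\pi_{4k-1}$ restricts to $0$ on $\mathrm{SO}(2k)$.

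The paper instead applies the rigidity lemma to the composite $\mathsf{M}_{S^{2k}}\to H(\mathrm{SO}(2k+1))\to H(\mathrm{SO}(2k))$. This composite models a constant map and sends $a_{2k}\mapsto 0$, so it equals the zero map on the nose. That forces the decomposable part, which lies in $\Lambda[\pi_3,\dots,\pi_{4k-5}]$ where restriction is injective, to vanish. After that only a rescaling of $\pi_{4k-1}$ remains, and your long exact sequence argument for $c\neq 0$ agrees with the paper's.
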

\begin{proof}
    The odd case is a well-known statement about rational cohomology of Lie groups, so we will prove the even-dimensional case only.
    The dga-homomorphism $\mathsf{M}(\ev_{e_1})$ modelling the evaluation map must be induced by
    \begin{equation*}
        a_{2k} \mapsto \mathrm{dec}_{2k} \qquad \text{ and } \qquad b_{4k-1} \mapsto \lambda_{4k-1}\pi_{4k-1} + \mathrm{dec}_{4k-1}
    \end{equation*}
    with $\lambda_{4k-1} \in \Q$ and decomposable elements $\mathrm{dec}_{2k} \in H^{2k}(\SOrth(2k+1)) = H^{2k}(\SOrth(2k))$ and $\mathrm{dec}_{4k-1} \in H^{4k-1}(\SOrth(2k))$. 
    Since the composition $\SOrth(2k) \hookrightarrow \SOrth(2k+1) \xrightarrow{\mathrm{ev}_1} S^{2k}$ is the constant map, this forces $\mathrm{dec}_{2k}$ to vanish, otherwise the constant map would produce a non-zero homomorphism between cohomology groups.
    Since the composition  $\mathsf{M}_{S^{2k}} \rightarrow H(\SOrth(2k+1)) \rightarrow H(\SOrth(2k))$ is homotopic to the zero map, and satisfies $a_{2k} \mapsto 0$, Lemma \ref{lem: rigid homotopy classes} implies $\mathrm{dec}_{4k-1}$ to vanish.
    
   The induced long exact sequence of rational homotopy groups together with their algebraic description in terms of their minimal models, see Theorem \ref{thm: homotopy classification} and Example \ref{Exmpl: Minimal model of Lie groups}, implies that $\lambda_{4k-1}$ must be non-zero.
   By rescaling $\pi_{4k-1}$ if necessary, we may assume $\lambda_{4k-1} = 1$. 
\end{proof}
The next example concerns complex projective spaces.
\begin{lemma}\label{lem: CP porojection}
    The map $\mathrm{ev}_{[e_0]} \colon \U(n+1) \rightarrow \CP^n$ given by evaluating at $[e_0] = [1:0:\dots:0]$ is modelled by the dga-homomorphism
    \begin{equation*}
        \mathsf{M}(\ev_{[e_0]}) \colon \mathsf{M}_{\CP^n} \rightarrow \mathsf{M}_{\U(n+1)} \quad \text{ induced by } \quad a_2 \mapsto 0 \text{ and } b_{2n+1} \mapsto \gamma_{2n+1}.
    \end{equation*}
\end{lemma}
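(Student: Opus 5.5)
We follow the strategy of Lemma \ref{lem: Sphere projection}. Since $H(\U(n+1)) = \Lambda[\gamma_1,\dots,\gamma_{2n+1}]$ has trivial differential, any dga-homomorphism $\mathsf{M}_{\CP^n} \rightarrow \mathsf{M}_{\U(n+1)}$ is determined by the images of the generators. Degree reasons force
\begin{equation*}
    a_2 \mapsto \mathrm{dec}_2 \qquad \text{ and } \qquad b_{2n+1} \mapsto \lambda_{2n+1}\gamma_{2n+1} + \mathrm{dec}_{2n+1},
\end{equation*}
where $\lambda_{2n+1}\in\Q$ and $\mathrm{dec}_2$, $\mathrm{dec}_{2n+1}$ are decomposable elements built from $\gamma_1,\dots,\gamma_{2n-1}$. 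Note that $H^2(\U(n+1))$ is spanned by products $\gamma_1\gamma_j$ with $j$ odd and $j+1=2$, so it contains no such products and $\mathrm{dec}_2=0$ automatically. Alternatively, we can use the argument below.

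To kill the decomposable parts, we consider the inclusion $\U(n)\hookrightarrow \U(n+1)$ as the stabiliser block acting on $e_1,\dots,e_n$. It fixes $[e_0]$, so the composite $\U(n)\rightarrow \U(n+1)\rightarrow \CP^n$ is constant. On cohomology the inclusion is the projection $\Lambda[\gamma_1,\dots,\gamma_{2n+1}]\rightarrow \Lambda[\gamma_1,\dots,\gamma_{2n-1}]$ sending $\gamma_{2n+1}\mapsto 0$, and it is injective on the subalgebra generated by $\gamma_1,\dots,\gamma_{2n-1}$. The composite model therefore sends $a_2\mapsto \mathrm{dec}_2$ and $b_{2n+1}\mapsto \mathrm{dec}_{2n+1}$, and it must be homotopic to the model of the constant map, which sends both generators to zero.

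First, $\mathrm{dec}_2$ must vanish, because homotopic maps induce the same map on cohomology and $a_2$ represents a nonzero class. Second, since $n+1\geq 2$, Lemma \ref{lem: rigid homotopy classes} applies with $k=1$ and exponent $n+1$. For $n\geq 2$ every homotopy class has a unique representative. For $n=1$ we are in the case where the exponent is $2$, and here we use that $a_2\mapsto 0$. In either case the composite equals the zero map, so $\mathrm{dec}_{2n+1}=0$. By injectivity of the projection on these elements, the same vanishing holds in $\mathsf{M}_{\U(n+1)}$.

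It remains to show $\lambda_{2n+1}\neq 0$, which is the main point. We use the fibration $\U(n)\rightarrow \U(n+1)\rightarrow S^{2n+1}$ composed with the projection, or more directly the fibration $\U(n)\times\U(1)\rightarrow \U(n+1)\rightarrow \CP^n$. Rationally, $\pi_{2n+1}(\CP^n)_\Q\cong\Q$, generated by the Hopf fibre direction, while $\pi_{2n+1}$ of $\U(n)\times\U(1)$ vanishes rationally. The long exact sequence then makes $\pi_{2n+1}(\U(n+1))_\Q\rightarrow\pi_{2n+1}(\CP^n)_\Q$ surjective, hence nonzero. By Theorem \ref{thm: homotopy classification}, the linear part of $\mathsf{M}(\ev_{[e_0]})$ in degree $2n+1$ is dual to this map, so $\lambda_{2n+1}\neq0$. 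Finally, we rescale $\gamma_{2n+1}$ to get $\lambda_{2n+1}=1$.
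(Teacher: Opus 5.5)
Your proof is correct. It differs from the paper's only in how the decomposable part $\mathrm{dec}_{2n+1}$ is killed.

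The paper uses a ``stability'' square. There $\U(n)\subset\U(n+1)$ is the upper-left block, so $\ev_{[e_0]}$ carries $\U(n)$ into $\CP^{n-1}\subset\CP^n$, and the inclusion $\CP^{n-1}\hookrightarrow\CP^n$ is modelled by $b_{2n+1}\mapsto a_2b_{2n-1}$. Since the model of $\U(n)\to\CP^{n-1}$ sends $a_2$ to $0$, the composite through $\mathsf{M}_{\CP^{n-1}}$ vanishes on $b_{2n+1}$. Lemma \ref{lem: rigid homotopy classes} then makes the square commute strictly, which gives $\mathrm{dec}_{2n+1}=0$. The paper phrases this as an induction on $n$, but effectively only $a_2\mapsto 0$ is used.

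You instead restrict to the copy of $\U(n)$ inside the stabiliser of $[e_0]$, so the composite to $\CP^n$ is constant. This is exactly the argument the paper uses for Lemma \ref{lem: Sphere projection}. Your version is slightly more economical: it needs neither the model of $\CP^{n-1}\hookrightarrow\CP^n$ nor an induction. It also carries over directly to $\Sp(n+1)\to\HP^n$, and to $\Ffour\to\OP^2$ via the stabiliser $\Spin(9)$ and the stated effect of $\Spin(9)\hookrightarrow\Ffour$ on cohomology.

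There are a few small points to tidy up.
\begin{itemize}
\item The one extra input you use is that the lower-right block inclusion induces the standard projection on cohomology. This holds because that inclusion is conjugate, in the connected group $\U(n+1)$, to the upper-left one and hence homotopic to it. You should say this explicitly.
\item Your justification of $\mathrm{dec}_2=0$ is garbled. It suffices to note that the only candidate in degree $2$ is $\gamma_1^2=0$, so $H^2(\U(n+1))=0$.
\item In your last paragraph, surjectivity of $\pi_{2n+1}(\U(n+1))_\Q\to\pi_{2n+1}(\CP^n)_\Q$ follows from $\pi_{2n}(\U(n)\times\U(1))_\Q=0$, the next term of the long exact sequence. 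The vanishing of $\pi_{2n+1}$ of the fibre, which you cite, gives injectivity instead. Both groups vanish, since compact Lie groups have rational homotopy only in odd degrees, so your conclusion stands. In fact the map is an isomorphism, as the paper states.
\end{itemize}
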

\begin{proof}
    By applying the long exact sequence of rational homotopy groups to the fibration $\U(1)\times \U(n) \rightarrow \U(n+1) \xrightarrow{\ev_{[e_0]}} \CP^n$, we deduce $\pi_{2n+1}(\ev_{[e_0]})_\Q$ must be an isomorphism.
    From the natural isomorphism $\pi_n(X)_\Q \cong \mathrm{Hom}(V^n_X,\Q)$ for each nilpotent space of finite $\Q$-type, where $V_X$ is the generating graded vector space of the minimal model of $X$, we deduce that $\ev_{[e_0]}$ is modelled by a homotopy class of a dga-homomorphism of form\footnote{After rescaling $\gamma_{2n+1}$ if necessary.}
    \begin{equation*}
        a_2 \mapsto 0 \qquad \text{ and } \qquad b_{2n+1} \mapsto \gamma_{2n+1} + \mathrm{dec}_{2n+1},
    \end{equation*}
    where $\mathrm{dec}_{2n+1} \in \Lambda[\gamma_1,\dots, \gamma_{2n-1}]$ is a linear combination of decomposable elements.

    We prove by induction that $\mathrm{dec}_{2n+1} = 0$.
    Starting with $n=1$, we see $\mathrm{dec}_{3} = 0$ for algebraic reasons.

    Evaluation at $[e_0]$ is stable in the sense that the left square in following diagram commutes strictly, so right square commutes up to homotopy
    \begin{equation*}
        \xymatrix{  \U(n+1) \ar[rr]^{\ev_{[e_0]}} && \CP^n & \mathsf{M}_{\U(n+1)} \ar@{->>}[d]_{\mathsf{M}(\incl)} && \ar[ll]_{\mathsf{M}(\ev_{[e_0]})} \mathsf{M}_{\CP^n} \ar[d]^{b_{2n+1}\mapsto a_2b_{2n-1}} \\
        \U(n) \ar[u]^{\incl} \ar[rr]^{\ev_{[e_0]}} && \CP^{n-1} \ar[u] & \mathsf{M}_{\U(n)} && \ar[ll]_{\mathsf{M}(\ev_{[e_0]})} \mathsf{M}_{\CP^{n-1}}  }
    \end{equation*}
    where $\mathsf{M}(\incl) $ is given by $\gamma_j \mapsto \gamma_j$ and $\gamma_{2n+1} \mapsto 0$.
    By Lemma \ref{lem: rigid homotopy classes} the right diagram must commute, so we deduce $0=\mathsf{M}(\incl) \circ \mathsf{M}(\ev_{[e_0]}) = \mathsf{M}(\incl)(\gamma_{2n+1} + \mathrm{dec}_{2n+1}) = \mathrm{dec}_{2n+1}$.
\end{proof}
The argument of the previous proof generalises to the quaternionic projective spaces and the Cayley plane.
\begin{lemma}\label{lem: HP and OP projection}
    The evaluation maps $\mathrm{ev}_{[e_0]} \colon \Sp(n+1) \rightarrow \HP^n$ and $\mathrm{ev}_{[e_0]} \colon \Ffour \rightarrow \OP^2 = \Ffour/\Spin(9)$ are modelled by the dga-homomorphisms
    \begin{align*}
        \mathsf{M}_{\HP^n} \rightarrow \mathsf{M}_{\Sp(n+1)} \qquad &\text{ induced by } \qquad a_4 \mapsto 0 \ \text{ and } \  b_{4n+3} \mapsto q_{4n+3},\\
        \mathsf{M}_{\OP^2} \rightarrow \mathsf{M}_{\Ffour} \hspace{23pt}\qquad &\text{ induced by } \qquad a_8 \mapsto 0 \ \text{ and } \ b_{23} \mapsto \xi_{23}.        
    \end{align*}
\end{lemma}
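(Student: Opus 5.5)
We will follow the same pattern as the proof of Lemma \ref{lem: CP porojection}, treating the two cases in parallel. In both cases the target has trivial differential and is an exterior algebra on odd generators. The image of the even generator ($a_4$, respectively $a_8$) lies in $H^4(\Sp(n+1))$, respectively $H^8(\Ffour)$, and so consists of decomposables. We first show that this image is zero; this is automatic except in the quaternionic case with $n\geq 1$, as explained below. The image of $b$ then has the form
\[
  b_{4n+3}\mapsto \lambda q_{4n+3}+\mathrm{dec}_{4n+3}, \qquad b_{23}\mapsto \lambda\xi_{23}+\mathrm{dec}_{23}.
\]

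To see that $\lambda\neq 0$, we use the long exact sequence of rational homotopy groups for the fibrations
\[
  \Sp(1)\times\Sp(n)\to\Sp(n+1)\to\HP^n \quad\text{and}\quad \Spin(9)\to\Ffour\to\OP^2,
\]
together with Theorem \ref{thm: homotopy classification}. The fibres have no rational homotopy in degree $4n+3$, respectively $23$. Indeed, $\Spin(9)$ only has rational homotopy in degrees $3,7,11,15$. It follows that $\pi_{4n+3}(\ev)_\Q$, respectively $\pi_{23}(\ev)_\Q$, is injective, in fact an isomorphism. The linear part of the model is therefore nonzero, and rescaling $q_{4n+3}$, respectively $\xi_{23}$, gives $\lambda=1$.

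For the Cayley plane we must show $\mathrm{dec}_{23}=0$ and $a_8\mapsto 0$. Since $H^8(\Ffour)=0$ for degree reasons (the products of generators have degrees $3,11,14,15,18,\dots$), the image of $a_8$ vanishes. For $\mathrm{dec}_{23}$ we use the composite
\[
  \Spin(9)\hookrightarrow\Ffour\to\OP^2,
\]
which is constant. By Example \ref{Exmpl: Minimal model of Lie groups}, restriction sends $\xi_{23}\mapsto 0$ and $\xi_j\mapsto\pi_j$ for $j=3,11,15$, and it is injective on $\Lambda[\xi_3,\xi_{11},\xi_{15}]$. Since $n=3$, Lemma \ref{lem: rigid homotopy classes} says the composite model is the unique representative of its homotopy class, which is that of the zero map. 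Hence the image of $\mathrm{dec}_{23}$ vanishes in $H(\Spin(9))$. Now $\mathrm{dec}_{23}$ is a decomposable of degree $23$ that cannot involve $\xi_{23}$, since a product with $\xi_{23}$ has degree greater than $23$. So it lies in $\Lambda[\xi_3,\xi_{11},\xi_{15}]$, and injectivity gives $\mathrm{dec}_{23}=0$. In fact $\mathrm{dec}_{23}$ vanishes even for degree reasons, as no product of $\xi_3,\xi_{11},\xi_{15}$ has degree $23$, but the argument above is the robust one.

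For $\HP^n$ we argue by induction on $n$, exactly as in Lemma \ref{lem: CP porojection}. We use the strictly commuting square formed by $\Sp(n)\hookrightarrow\Sp(n+1)$ and $\HP^{n-1}\hookrightarrow\HP^n$, together with the model of the inclusion from Example \ref{Exmpl: Quaternionic Porjective Spaces}. The image of $a_4$ lies in $H^4(\Sp(n+1))$, which vanishes because products of odd generators have degree at least $3+7=10$; in particular $a_4\mapsto 0$. By induction the lower model is $a_4\mapsto 0$, $b_{4n-1}\mapsto q_{4n-1}$. Lemma \ref{lem: rigid homotopy classes}, with $n+1\geq 3$ or, in the case $n+1=2$, using $a_4\mapsto 0$, then forces the square of models to commute strictly. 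Hence
\[
  \mathsf{M}(\incl)(q_{4n+3}+\mathrm{dec}_{4n+3})=a_4\text{-image}\cdot q_{4n-1}=0,
\]
so $\mathrm{dec}_{4n+3}=0$, because $\mathsf{M}(\incl)$ is injective on $\Lambda[q_3,\dots,q_{4n-1}]$. The base case $n=0$ is trivial. The main subtle point is checking that Lemma \ref{lem: rigid homotopy classes} applies to the composite into $H(\Sp(n))$ with trivial differential, including the case $n=2$ via the vanishing of the image of $a_4$.
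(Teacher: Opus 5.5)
Your proposal is correct and follows the route the paper intends. The paper's proof is only the remark that the argument of Lemma~\ref{lem: CP porojection} generalises: use the long exact sequence to show the coefficient of the indecomposable is nonzero and rescale, then kill the decomposable part with a strictly commuting square through a subgroup and Lemma~\ref{lem: rigid homotopy classes}. That is what you do, using $\Spin(9)\hookrightarrow\Ffour$ for $\OP^2$, where degree reasons alone already give $H^8(\Ffour)=0$ and $H^{23}(\Ffour)=\Q\,\xi_{23}$; and each time you invoke Lemma~\ref{lem: rigid homotopy classes} the even generator is sent to zero, which is exactly the case where that lemma is safe (for a general $(\mathsf{A},0)$ its unconditional $n\geq 3$ clause can fail), so your argument is sound as written.
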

Finally, we would like to calculate the rational homotopy groups of the mapping spaces $\mathrm{C}(\KP^m,\KP^n)$ and $\mathrm{C}(S^m,S^n)$ using  Theorem \ref{Thm: Mapping Spaces via derivations}.
We begin with discussing the case of projective spaces.
%
\begin{lemma}\label{lem: h-groups mapping spaces}
    For $1 \leq m < n$, $k=\mathrm{dim}_\C(\mathbb{K})$, and the standard inclusion $\incl \colon \KP^m \hookrightarrow \KP^n$, the rational homotopy groups of the mapping spaces $\mathrm{C}(\KP^m,\KP^n)$ are given by
    \begin{equation*}
        \pi_r( \mathrm{C}(\KP^m,\KP^n),\incl)_\Q \cong  \begin{cases}
            \Q, &\text{if } r \in \{ 2k(n-\alpha) + 2k+1 \, : \, 0 \leq \alpha \leq m \} \cup \{2k\},  \\
            0, &\text{else}.
        \end{cases}
    \end{equation*}
    If $m=n$, we have 
    \begin{equation*}
        \pi_r( \mathrm{hAut}(\mathbb{K}P^n),\id)_\Q \cong  \begin{cases}
            \Q, &\text{if } r \in \{ 2k(n-\alpha) + 2k+1 \, : \, 0 \leq \alpha \leq n \},  \\
            0, &\text{else}.
        \end{cases}
    \end{equation*}
\end{lemma}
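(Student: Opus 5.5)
The plan is to apply Theorem~\ref{Thm: Mapping Spaces via derivations} with $X=\KP^m$ and $Y=\KP^n$, and to compute the homology of the derivation complex $\Der^{\mathsf{M}(\incl)}(\mathsf{M}_{\KP^n},\mathsf{M}_{\KP^m})$ by hand. Write $\mathsf{M}_{\KP^n}=\Lambda[a_{2k},b\,|\,db=a_{2k}^{n+1}]$ with $\deg b=2k(n+1)-1$. Write $\mathsf{M}_{\KP^m}=\Lambda[a_{2k},b'\,|\,db'=a_{2k}^{m+1}]$. By Examples~\ref{Exmpl: Complex Projective Spaces}, \ref{Exmpl: Quaternionic Porjective Spaces} and \ref{Exmpl: Cayley Plane}, $\mathsf{M}(\incl)$ is given by $a_{2k}\mapsto a_{2k}$ and $b\mapsto a_{2k}^{n-m}b'$.

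\textbf{Simplifying the target.} Projective spaces are formal, so the projection $p\colon\mathsf{M}_{\KP^m}\to H(\KP^m)=\Q[a_{2k}]/\langle a_{2k}^{m+1}\rangle$ with $b'\mapsto 0$ is a quasi-isomorphism. I will use the standard fact that, for a Sullivan source $\Lambda V$, postcomposition $\theta\mapsto p\circ\theta$ is a quasi-isomorphism
\begin{equation*}
  \Der^{\varphi}(\Lambda V,A)\rightarrow\Der^{p\varphi}(\Lambda V,B)
\end{equation*}
whenever $p\colon A\to B$ is a surjective quasi-isomorphism. To justify it, filter both complexes by the word length of the generators, which identifies them with $\mathrm{Hom}(V,A)$ and $\mathrm{Hom}(V,B)$ up to a spectral sequence argument. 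This is exactly the invariance used in \cite{Buijs2008RationalLieAlgebraFunction}. After this step the relevant dga-homomorphism is $\psi=p\circ\mathsf{M}(\incl)$ with $\psi(a_{2k})=a_{2k}$ and $\psi(b)=0$, and the target has zero differential.

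\textbf{The computation.} A $\psi$-derivation $\theta$ of degree $r\ge1$ is determined by $\theta(a_{2k})\in H^{2k-r}$ and $\theta(b)\in H^{2k(n+1)-1-r}$. Since $H$ is concentrated in degrees $2kj$ with $0\le j\le m$, the complex has the following basis:
\begin{itemize}
  \item $\theta_a$ in degree $2k$, with $\theta_a(a_{2k})=1$ and $\theta_a(b)=0$;
  \item $\theta_\alpha$ for $0\le\alpha\le m$, with $\theta_\alpha(a_{2k})=0$ and $\theta_\alpha(b)=a_{2k}^{\alpha}$, in degree $2k(n-\alpha)+2k-1$.
\end{itemize}
Since the target differential vanishes, $\delta\theta=\pm\theta\circ d$. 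This gives $\delta\theta_\alpha=0$ because $da_{2k}=0$, and
\begin{equation*}
  \delta\theta_a(b)=\pm\theta_a(a_{2k}^{n+1})=\pm(n+1)\,a_{2k}^{n}.
\end{equation*}
If $m<n$, then $a_{2k}^n=0$ in $H$, so every differential vanishes. The homology is then one copy of $\Q$ for each basis element, namely in degree $2k$ and in the degrees of the $\theta_\alpha$.

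If $m=n$ (so $\incl=\id$ and $\mathrm{C}(\KP^n,\KP^n;\id)=\hAut(\KP^n)_0$), then $\delta\theta_a=\pm(n+1)\theta_n$. This cancels the classes of $\theta_a$ and $\theta_n$, the latter sitting in degree $2k-1$, and leaves exactly the $\theta_\alpha$ with $0\le\alpha\le n-1$. For $r=1$, which only occurs for $k=1$ via $\theta_n$, the second isomorphism of Theorem~\ref{Thm: Mapping Spaces via derivations} shows that $\Gamma\pi_1$, hence $\pi_1$, vanishes rationally when the degree-$1$ homology does. The Cayley case $k=4$, $m<n=2$ is covered verbatim.

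\textbf{Main obstacle.} The main obstacle is the degree bookkeeping, and I would double-check it against the known answer $\pi_*(\hAut(\CP^n))_\Q$, which is nonzero exactly in degrees $3,5,\dots,2n+1$. That check suggests the index set in the statement should be read as the degrees $2k(n-\alpha)+2k-1$ of the surviving $\theta_\alpha$. These are $\alpha\le m$ for $m<n$ and $\alpha\le n-1$ for $m=n$, the latter equivalently $2k(n-\beta)+2k+1$ for $\beta=1,\dots,n$ after reindexing. I would reconcile this indexing before finalising. The only genuinely non-formal input is the invariance of the derivation complex under quasi-isomorphisms of the target, and I would cite it rather than reprove it.
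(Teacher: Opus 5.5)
Your computation is correct, but it takes a different route from the paper. The paper computes directly in $\mathrm{Der}^{\mathsf{M}(\incl)}(\mathsf{M}_{\mathbb{K}P^n},\mathsf{M}_{\mathbb{K}P^m})$, with the minimal model $\Lambda[a_{2k},b']$ as target. Its complex therefore also contains the derivations $a_{2k}^{\alpha}\otimes b^\vee$ for $m<\alpha\le n$ and $a_{2k}^{\beta}b'\otimes b^\vee$ for $0\le\beta<n-m$. These cancel in acyclic pairs via $\delta(a_{2k}^{\beta}b'\otimes b^\vee)=a_{2k}^{\beta+m+1}\otimes b^\vee$. The degree-$2k$ class is then represented by the corrected cycle $1\otimes a_{2k}^\vee+(n+1)\,a_{2k}^{n-m-1}b'\otimes b^\vee$, not by $1\otimes a_{2k}^\vee$ alone.

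You instead use formality of $\mathbb{K}P^m$ together with invariance of the derivation complex under quasi-isomorphisms of the target. This shrinks the complex to $m+2$ basis elements, and for $m<n$ all differentials vanish. The cancelling pairs and the correction term never appear, so the degree count is transparent. The price is the extra invariance input, but here it is elementary and needs no spectral sequence or word-length filtration; the relevant filtration is on the generators. The derivations vanishing on $a_{2k}$ form a subcomplex isomorphic to $\mathrm{Hom}(\mathbb{Q}b,A)$ with differential $d_A\circ(\,\cdot\,)$, since there $\theta(a_{2k}^{n+1})=(n+1)\varphi(a_{2k})^n\theta(a_{2k})=0$. The quotient is $\mathrm{Hom}(\mathbb{Q}a_{2k},A)$, and the five lemma on the two long exact sequences gives the quasi-isomorphism. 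The paper's route, in turn, needs nothing beyond Theorem~\ref{Thm: Mapping Spaces via derivations} applied literally to minimal models.

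On the degrees you are right. $\theta_\alpha$ has degree $2k(n-\alpha)+2k-1$, which is the degree the paper's own proof assigns to $a_{2k}^{\alpha}\otimes b^\vee$. So the $+1$ in the printed statement, and the range $0\le\alpha\le n$ in the case $m=n$, are misprints. For instance, the evaluation fibration for $\mathrm{C}(\mathbb{C}P^1,\mathbb{C}P^2)$ gives rational homotopy exactly in degrees $2,3,5$.

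Your proposed reindexing, however, only works for $k=1$. Putting $\beta=\alpha+1$ gives $2k(n-\beta)+4k-1$, which equals $2k(n-\beta)+2k+1$ only when $k=1$. For $\mathbb{H}P^n$ the correct degrees are $7,11,\dots,4n+3$, not $5,9,\dots,4n+1$. So state the result directly:
\begin{itemize}
\item for $m<n$, the index set is $\{2k(n-\alpha)+2k-1 : 0\le\alpha\le m\}\cup\{2k\}$;
\item for $m=n$, it is $\{2k(n-\alpha)+2k-1 : 0\le\alpha\le n-1\}$.
\end{itemize}
Do not try to match the printed index set.
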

\begin{proof}
    Recall that the minimal models of $\KP^n$ is given by $\Lambda[a_{2k},b_{2k(n+1)-1} \, | \, db_{2k(n+1)-1} = a_{2k}^{n+1}]$ and that the inclusion is modelled by the dga-homomorphism $\mathsf{M}(\incl)$ that sends $a_{2k}$ to $a_{2k}$ and $b_{2k(n+1)-1}$ to $a_{2k}^{n-m}b_{2k(m+1)-1}$.

    Since derivations with domain a free algebra are uniquely determined by the images of the basis of the generating vector space, we deduce that the vector space $\Der(\mathsf{M}_{\KP^n},\mathsf{M}_{\KP^m})$ of all positive graded $\mathsf{M}(\incl)$-derivation is generated by the derivations
    \begin{equation*}
        1 \otimes a_{2k}^\vee, \qquad a^\alpha_{2k} \otimes b_{2k(n+1)-1}^\vee, \qquad a_{2k}^\beta b_{2k(m+1)-1}\otimes b_{2k(n+1)-1}^\vee
    \end{equation*}
    with $0 \leq \alpha \leq n$ and $0 \leq \beta < n-m$.
    The derivations are of degree $2k$, $2k(n-\alpha)+2k-1$, and $2k(n-m)-2k\beta$, respectively.

    Straightforward computations yield the identities
    \begin{align*}
        \delta(1 \otimes a_{2k}^\vee) &= -(n+1)a_{2k}^n\otimes b_{2k(n+1)-1}^\vee, \qquad \delta(a_{2k}^\alpha \otimes b_{2k(n+1)-1}^\vee) = 0, \\
        \delta(a_{2k}^\beta b_{2k(m+1)-1}\otimes b_{2k(n+1)-1}^\vee) &= a_{2k}^{\beta+m+1}\otimes b_{2k(n+1)-1}^\vee. 
    \end{align*}
    We conclude that $\ker \delta$ is spanned by the derivations $a_{2k}^\alpha \otimes b_{2k(n+1)-1}$ and  $1 \otimes a_{2k}^\vee + (n+1)a_{2k}^{n-m}b_{2k(m+1)-1} \otimes b_{2k(n+1)-1}^\vee$ if $m<n$, and spanned by $a_{2k}^\alpha \otimes b_{2k(n+1)-1}$ if $m=n$. 
    The image of $\delta$ is generated by the derivations $a_{2k}^\alpha \otimes b_{2k(n+1)-1}$ with $m < \alpha \leq n$ if $m<n$ and by $a_{2k}^n \otimes b_{2k(n+1)-1}$ if $m=n$.
    The resulting homology groups are then generated by $a_{2k}^\alpha \otimes b_{2k(n+1)-1}^\vee$ and $1 \otimes a_{2k}^\vee + (n+1)a_{2k}^{n-m}b_{2k(m+1)-1} \otimes b_{2k(n+1)-1}^\vee$ for $0 \leq \alpha \leq m<n$ and by $a_{2k}^n \otimes b_{2k(n+1)-1}^\vee$, so we deduce
    \begin{equation*}
        H_r(\Der(\mathsf{M}_{\KP^n},\mathsf{M}_{\KP^m}),\delta) = \begin{cases}
            \Q, &\text{if } r \in \{ 2k(n-\alpha) + 2k+1 \, : \, 0 \leq \alpha \leq m \} \cup \{2k\} \\
            0, &\text{else}.
        \end{cases}
    \end{equation*}
    and
    \begin{equation*}
        H_r(\Der( \mathrm{hAut}(\mathbb{K}P^n),\delta) = \begin{cases}
            \Q, &\text{if } r \in \{ 2k(n-\alpha) + 2k+1 \, : \, 0 \leq \alpha \leq n-1 \}  \\
            0, &\text{else}.
        \end{cases}
    \end{equation*}
    The claim now follows from Theorem \ref{Thm: Mapping Spaces via derivations}.
\end{proof}
The corresponding result for spheres can be obtained in the same fashion.
However, if $m<n$ we have the following alternative, because, in this case, the inclusion $S^m \hookrightarrow S^n$ is nullhomotopic.
Thus, the fibration $\mathrm{C}_\ast(S^m,S^n) \rightarrow \mathrm{C}(S^m,S^n) \xrightarrow{\ev_{e_1}} S^n$ with the fibre the space $\mathrm{C}_\ast(S^m,S^n)$ of all base-point preserving continuous maps $S^m \rightarrow S^n$ has a section that sends a point $x$ to the constant map $\mathrm{const}_x \colon S^m \rightarrow S^n$ with value $x$.
From this, we deduce 
\begin{equation*}
    \pi_k(\mathrm{C}(S^m,S^n))_\Q = \pi_k(S^n)_\Q \oplus \pi_k(\Omega^mS^n)_\Q = \pi_k(S^n)_\Q \oplus \pi_{k+m}(S^n)_\Q,
\end{equation*}
which allows us to read off the rational homotopy groups from the minimal models of spheres.
We conclude the next result.
\begin{lemma}\label{lem: h-groups mapping spaces spheres}
    For $1 \leq m \leq n$, the rational homotopy groups $\pi_k(\mathrm{C}(S^m,S^n),\incl)_\Q$ are given as follows:
    \begin{itemize}
        \item[(i)] If $n$ is odd, then $\pi_k(\mathrm{C}(S^m,S^n),\incl)_\Q = \Q$ if $k=n$ or $k=n-m>0$ and zero otherwise. 
        \item[(ii)] If $n$ is even and $m<n$, then $\pi_\ast( \mathrm{C}(S^m,S^n),\incl)_\Q$ is a four dimensional vector space with generators in degree $n-m,n,2n-1,$ and $2n-m-1$.  
        \item[(iii)] If $n=m$ even, then $\pi_k(\hAut(S^n),\id)_\Q = \Q$ if $k = 2n-1$.
    \end{itemize}
\end{lemma}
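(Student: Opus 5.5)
The plan is to use the splitting described just before the statement whenever $m<n$, and the derivation complex of Theorem \ref{Thm: Mapping Spaces via derivations} when $m=n$. The split fibration
\[
\mathrm{C}_\ast(S^m,S^n)\to\mathrm{C}(S^m,S^n)\xrightarrow{\ev_{e_1}}S^n
\]
gives $\pi_k(\mathrm{C}(S^m,S^n))_\Q\cong\pi_k(S^n)_\Q\oplus\pi_{k+m}(S^n)_\Q$. Note that for $m<n$ the component of $\incl$ is the component of the constant map, so the basepoint causes no trouble.

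By Theorem \ref{thm: homotopy classification}, the groups $\pi_j(S^n)_\Q$ are dual to the generators of $\mathsf{M}_{S^n}$. Examples \ref{Exmpl: Odd Spheres} and \ref{Exmpl: Even Dimensional Spheres} give these generators.
\begin{itemize}
\item[(i)] For $n$ odd, $\pi_j(S^n)_\Q=\Q$ exactly for $j=n$. The two summands then contribute $k=n$ and $k=n-m$.
\item[(ii)] For $n$ even, $\pi_j(S^n)_\Q=\Q$ exactly for $j=n$ and $j=2n-1$. The summands contribute $k\in\{n,2n-1\}$ and $k\in\{n-m,2n-1-m\}$, which is four generators.
\end{itemize}
One subtlety in the odd case is $m=n$, where $k=n-m=0$ would correspond to $\pi_0$. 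The statement excludes this by writing $n-m>0$, but the case $m=n$ must then be handled as part of (i) directly.

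For $m=n$ I will compute $H(\Der^{\id}(\mathsf{M}_{S^n},\mathsf{M}_{S^n}),\delta)$ in positive degrees, exactly as in Lemma \ref{lem: h-groups mapping spaces}.

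For $n$ odd, $\mathsf{M}_{S^n}=\Lambda[a_n]$ with $da_n=0$. The positive-degree derivations are spanned by $1\otimes a_n^\vee$ in degree $n$; the derivation $a_n\otimes a_n^\vee$ has degree $0$ and is discarded. The differential $\delta$ vanishes because $d=0$. So $\pi_n(\hAut(S^n))_\Q=\Q$, which matches (i) with only $k=n$ surviving.

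For $n$ even, the positive-degree derivations are
\[
1\otimes a_n^\vee \ (\deg n),\qquad 1\otimes b_{2n-1}^\vee\ (\deg 2n-1),\qquad a_n\otimes b_{2n-1}^\vee\ (\deg n-1).
\]
The element $b_{2n-1}\otimes b_{2n-1}^\vee$ has degree $0$ and $a_n\otimes a_n^\vee$ has degree $0$, so both are discarded. Using $db=a^2$, the differentials are
\[
\delta(1\otimes a_n^\vee)=\pm 2\,a_n\otimes b_{2n-1}^\vee,\qquad \delta(1\otimes b_{2n-1}^\vee)=0,\qquad \delta(a_n\otimes b_{2n-1}^\vee)=0.
\]
Hence the homology is $\Q$ in degree $2n-1$ only, which is (iii). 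Applying Theorem \ref{Thm: Mapping Spaces via derivations} then finishes the proof.

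The only real care point is the edge case $k=n-m$ in (i) and (ii), where one must check that $\pi_1$ is abelian, or use the $\Gamma\pi_1$ version. For (ii) I would also cross-check against the derivation complex with $\mathsf{M}(\incl)=0$ on generators. There $\delta(1\otimes a_n^\vee)=0$ because $\mathsf{M}(\incl)(a_n)=0$, so no cancellation occurs and all four classes survive.
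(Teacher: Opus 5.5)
Your proposal is correct and follows the paper's route. For $m<n$ you use the split evaluation fibration $\mathrm{C}_\ast(S^m,S^n)\to\mathrm{C}(S^m,S^n)\to S^n$ to get $\pi_k(S^n)_\Q\oplus\pi_{k+m}(S^n)_\Q$, exactly as the paper does, and for $m=n$ you carry out the derivation computation that the paper only indicates (``in the same fashion'' as Lemma \ref{lem: h-groups mapping spaces}); that computation is correct, leaving the single class $1\otimes a_n^\vee$ for $n$ odd, and for $n$ even only $1\otimes b_{2n-1}^\vee$ after the cancellation $\delta(1\otimes a_n^\vee)=\pm 2\,a_n\otimes b_{2n-1}^\vee$.
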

\section{Robustness of Symmetries}\label{Section: Robustness} 
%
%
%
We now prove the main results of this article by calculating rational models for the actions $\ell \colon \mathrm{Sym}(M,g) \times M \rightarrow M$ for the simply connected symmetric spaces listed in Table \ref{table: table of symmetric spaces.}.
\begin{table}[ht]
\centering
\begin{tabular}{|m{2.6cm}||m{2.6cm}|m{2.6cm}|m{2.6cm}|m{2.6cm}|}
\hline
 $M$ & $S^n$ & $\CP^n$ & $\HP^n$ & $\OP^2$ \\ \hline
$\mathrm{Sym}(M)$   &  $\SOrth(n+1)$     &   $\U(n+1)$    &  $\Sp(n+1)$     & $\Ffour$    \\ \hline
$\mathrm{Iso}(M)_0$  &  $\SOrth(n+1)$    &   $\U(n+1)/\U(1)$     &  $\Sp(n+1)/\Z_2$     & $\Ffour$     \\ \hline
\end{tabular}\vspace{3pt}
\caption{List of symmetric spaces we consider with symmetry group and the connected component of the unit in the associated isometry group.}
\label{table: table of symmetric spaces.}
\end{table}

The basis for the proofs will be the following technical lemma, which should be considered as a generalisation of Lemma \ref{lem: rigid homotopy classes}.
To formulate it, recall that a cdga $\mathsf{A}$ is \emph{connected} if $\mathsf{A}^0 = \Q\cdot 1$ and that it is \emph{augmented} if there exists a dga-homomorphism $\varepsilon_{\mathsf{A}} \colon \mathsf{A}\rightarrow \Q$.
Recall further that an arbitrary homotopy $H \colon \Lambda V \rightarrow \Lambda[t,dt] \otimes \mathsf{A} \otimes \Lambda W$ with $\mathsf{A}$ an augmented cdga gives rise to the following commutative diagram:
    \begin{equation*}
        \xymatrix{ && && \Q \otimes \Lambda W  \\
        \Lambda V \ar[rr]^-H \ar@<0.3pc>@/^1pc/[rrrru]^{\varphi_0}_{\varphi_1} \ar@<-0.3pc>@/_1pc/[rrrrd]^{\psi_0}_{\psi_1} && \Lambda[t,dt] \otimes \mathsf{A} \otimes \Lambda W \ar@<0.3pc>[rr]^{\mathrm{ev}_0} \ar@<-0.3pc>[rr]_{\mathrm{ev}_1} && \mathsf{A} \otimes \Lambda W  \ar@{->>}[u]_{\otimes \id}^{ \varepsilon_\mathsf{A}} \ar@{->>}[d]^{\otimes\varepsilon_{\Lambda W}}_{\id} \\
        && && \mathsf{A} \otimes \Q. }
    \end{equation*}
\begin{prop}\label{prop: description of homotopies}
    For $k\geq1$ and $n\geq m \geq 2$, let $(\Lambda V,d) = \Lambda[a_{2k},b_{{2kn-1}} \, |\, db_{2kn-1} = a_{2k}^n]$ and $(\Lambda W,d) = \Lambda[ a_{2k} , b_{2mk-1} \, | \, db_{2mk-1} = a_{2k}^m ]$ be two Sullivan algebras and let $(\mathsf{A},0)$ be an augmented, connected cdga with trivial differential.
    Then each homotopy $H \colon (\Lambda V,d) \rightarrow \Lambda[t,dt] \otimes \mathsf{A} \otimes (\Lambda W,d)$ is of the form
    \begin{align*}
        H(a_{2k}) &= 1 \otimes 1 \otimes \lambda_0\cdot a_{2k} + 1 \otimes \psi_0(a_{2k}) \otimes 1  + dt \otimes x_{2k-1} \otimes 1,  \\
        \begin{split}
            H(b_{2nk-1}) ={}& 1 \otimes \Bigl( \lambda_0^{n-m} \cdot 1\otimes a_{2k}^{n-m} b_{2km-1} +  \psi_0(b_{2kn-1}) \otimes 1 + h_0(b_{2kn-1}) \Bigr) \\
            &+t \otimes \left( n\cdot \psi_0(a_{2k})^{n-1} x_{2k-1}  + n \cdot \sum_{\alpha=1}^{n-1} \psi_0(a_{2k})^{n-\alpha-1}x_{2k-1} \otimes (\lambda_0a_{2k})^\alpha + dh_{dt}(b_{2kn-1})\right) \\
            &+dt \otimes h_{dt}(b_{2kn-1}),
        \end{split}
    \end{align*}
    where\footnote{We implicitly assume that $H$ is a dga-homomorphism, of course, so that $d\psi(b_{2kn-1}) = \psi_0(a_{2k})^n =0$.} $\lambda_0 \in \Q$, $x_{2k-1} \in \mathsf{A}^{2k-1}$ is closed, $\psi_0(a_{2k})^{r} = 0$ for $r > n-m$, 
    and $h_0(b_{2kn-1})\in \mathsf{A}^{\geq1} \otimes \Lambda^{\geq 1} W$ is a primitive of $\sum_{\alpha=m}^{n-1} \binom{n}{\alpha} \psi_0(a_{2k})^{n-\alpha} \otimes (\lambda_0a_{2k})^{\alpha}  ) $, while $h_{dt}(b_{2kn-1}) \in \mathsf{A} \otimes \Lambda W$ satisfies $dh_{dt}(b_{2kn-1}) \in \mathsf{A} \otimes a_{2k}^m \cdot \Lambda W$ \footnote{Note that if $m=n$, then this condition implies that $dh_{dt}(b_{2kn-1}) =0$.}. 
\end{prop}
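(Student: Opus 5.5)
Since $\Lambda V$ is free, $H$ is determined by $H(a_{2k})$ and $H(b_{2kn-1})$; the plan is to write the most general elements of the right degrees and impose $dH = Hd$ on both generators. Every element of $\Lambda[t,dt]\otimes\mathsf{A}\otimes\Lambda W$ can be written as $\sum_j t^j\otimes u_j + \sum_j t^j dt\otimes v_j$ with $u_j, v_j \in \mathsf{A}\otimes\Lambda W$. The differential is $d(t^j\otimes u) = j t^{j-1}dt\otimes u + t^j\otimes d_W u$, using $d_{\mathsf{A}}=0$ and the Koszul sign.

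\emph{The generator $a_{2k}$.} In $\mathsf{A}\otimes\Lambda W$, degree $2k$ is spanned by $\mathsf{A}^{2k}\otimes 1$ and $\mathsf{A}^0\otimes a_{2k}=\Q\, a_{2k}$, because $\mathsf{A}$ is connected and the next generator $b_{2mk-1}$ has degree $2mk-1>2k$. Degree $2k-1$ of $\mathsf{A}\otimes\Lambda W$ is $\mathsf{A}^{2k-1}\otimes 1$.

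The cocycle condition $dH(a_{2k})=0$, read off coefficient by coefficient in $t^j$ and $t^jdt$, forces the following:
\begin{itemize}
\item the $t^j$-parts with $j\geq 1$ vanish;
\item the $t^j dt$-parts with $j\geq 1$ vanish (they would have to be $d$-exact in a way incompatible with $d(t^{j+1})$, unless matched by a nonzero $t^{j+1}$ term, which is ruled out by the first item).
\end{itemize}
What remains is $H(a_{2k}) = 1\otimes(\psi_0(a_{2k})\otimes 1 + \lambda_0\, a_{2k}) + dt\otimes x_{2k-1}$. Applying $\varepsilon_{\mathsf A}$ and $\varepsilon_{\Lambda W}$ identifies these pieces with $\varphi_0$ and $\psi_0$. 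The element $x_{2k-1}$ is automatically closed, since $d_{\mathsf A}=0$.

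\emph{The generator $b_{2kn-1}$: computing $H(db)$.} Here I compute $H(db_{2kn-1}) = H(a_{2k})^n$. Put $P=\psi_0(a_{2k})\otimes 1$ and $Q=\lambda_0 a_{2k}$; these commute and have even degree. Since $(dt)^2=0$,
\[
H(a_{2k})^n = 1\otimes (P+Q)^n + dt\otimes n(P+Q)^{n-1}x_{2k-1}.
\]
Write $H(b) = \sum_j t^j\otimes u_j + \sum_j t^j dt\otimes v_j$ and compare coefficients:
\begin{itemize}
\item the $t^0$-part gives $d u_0 = (P+Q)^n$;
\item the $t^j$-parts with $j\geq1$ give $d u_j=0$;
\item the $dt$-part gives $u_1 - d v_0 = n(P+Q)^{n-1}x$;
\item the $t^jdt$-parts with $j\geq1$ give $(j+1)u_{j+1} = d v_j$.
\end{itemize}

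\emph{Decoding the $t^0$-equation.} To decode $du_0=(P+Q)^n$ I split $\mathsf{A}\otimes\Lambda W$ by $\Lambda W$-word length. The cohomology of $\Lambda W$ is $\Q[a]/(a^m)$, and $d$ only changes the $\Lambda W$ factor. So the component of $(P+Q)^n$ lying in $\mathsf{A}\otimes \Q[a]/(a^m)$ must vanish in cohomology. Expanding binomially, this says $\binom{n}{\alpha}\psi_0(a)^{n-\alpha}\lambda_0^\alpha = 0$ for $\alpha<m$.
\begin{itemize}
\item From $\alpha=0$ we get $\psi_0(a)^n=0$.
\item If $\lambda_0\neq0$, then $\psi_0(a)^r=0$ for $r>n-m$.
\item If $\lambda_0=0$, the stated condition on $\psi_0(a)^r$ has to come from elsewhere; I expect this may need care or a reinterpretation, and I would check this case first.
\end{itemize}
The $\alpha=n$ term is handled by $\lambda_0^{n-m}a^{n-m}b_{2km-1}$, since $d(a^{n-m}b)=a^n$. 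The pure-$\mathsf{A}$ term is handled by $\psi_0(b)\otimes1$. The mixed terms with $m\le\alpha\le n-1$ are exact in $\mathsf{A}\otimes\Lambda^{\ge1}W$, and $h_0(b)$ is defined as a chosen primitive of them.

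\emph{Higher powers of $t$.} Set $h_{dt}(b):=v_0$; then the $dt$-equation gives $u_1$. The equations $(j+1)u_{j+1}=dv_j$ together with $du_j=0$ have to be truncated to reach the displayed form, in which only $1$, $t$ and $dt$ appear. I expect this truncation to be the main obstacle. My plan is to use the $\mathsf{A}$-degree and $\Lambda W$-degree bounds to show $v_j=0$ for $j\ge1$; alternatively, I would normalize by absorbing these terms, arguing via the evaluation maps as the footnoted diagram suggests.

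Finally, the condition $dv_0\in \mathsf{A}\otimes a^m\Lambda W$ should follow from the $t^1$-equation $du_1=0$. Applying $d$ to $u_1=n(P+Q)^{n-1}x+dv_0$ gives $0=d(dv_0)$ only, which is automatic, so the condition must come from somewhere else. I would obtain it by comparing the $\Lambda W$-cohomology classes of $u_1$: modulo the ideal generated by $a^m$, the class of $u_1$ is forced to be $n(P+Q)^{n-1}x$.
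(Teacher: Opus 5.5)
Your coefficient comparison on the $1$-, $t$- and $dt$-components is the paper's argument. You decode $du_0=(P+Q)^n$ through $H(\mathsf{A}\otimes\Lambda W)=\mathsf{A}\otimes\mathbb{Q}[a_{2k}]/(a_{2k}^m)$, and you obtain $u_1=n(P+Q)^{n-1}x+dv_0$, which incidentally carries the factors $\binom{n-1}{\alpha}$ missing from the displayed $t$-coefficient.

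The genuine gap is the higher powers of $t$. Your claim that the $t^j\,dt$-components of $H(a_{2k})$ vanish for $j\geq1$ is false. For $v\in\mathsf{A}^{2k-1}$ the element $t^j\,dt\otimes v\otimes 1$ is a cocycle, so $dH(a_{2k})=0$ says nothing about it. Writing $\sum_j t^j\,dt\otimes x_j\otimes 1$ for the $dt$-part of $H(a_{2k})$, your $b$-equations should read $(j+1)u_{j+1}-dv_j=n(P+Q)^{n-1}x_j$ for all $j\geq0$. The truncation you call the main obstacle therefore cannot be proved by degree bounds, because it is false. For $\mathsf{A}=\Lambda[x]$ with $|x|=2k-1$,
\[
H(a_{2k})=1\otimes 1\otimes a_{2k}+t\,dt\otimes x\otimes 1,\qquad H(b_{2kn-1})=1\otimes 1\otimes a_{2k}^{n-m}b_{2km-1}+\frac{n}{2}\,t^2\otimes x\otimes a_{2k}^{n-1}
\]
is a dga-homomorphism that is not of the displayed form. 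The paper does not prove the truncation either: its proof starts from a decomposition into $1$-, $t$- and $dt$-components only, i.e.\ it tacitly treats homotopies affine in $t$.

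What makes the proposition usable for arbitrary homotopies, and is all the later applications need, is your ``absorbing'' idea made concrete by integrating over $t$. Set $x:=\sum_j x_j/(j+1)$ and $h_{dt}:=\sum_j v_j/(j+1)$. Summing the recursion gives
\[
\mathrm{ev}_1H(b_{2kn-1})-\mathrm{ev}_0H(b_{2kn-1})=\sum_{j\geq1}u_j=n(P+Q)^{n-1}x+dh_{dt},
\]
while $\mathrm{ev}_0H(a_{2k})=\mathrm{ev}_1H(a_{2k})$. So $H$ has the same endpoints as the affine homotopy of the displayed form built from $x$ and $h_{dt}$. The proposition is correct only in that sense.

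There are three further points.
\begin{itemize}
\item Your suspicion about $\lambda_0=0$ is justified, and there is nothing to find ``elsewhere''. In that case only $\psi_0(a_{2k})^n=0$ is forced; the paper's own proof records the alternative $\varphi_0(a_{2k})=0$. For example, take $n=m=2$ and $\mathsf{A}=\mathbb{Q}[y]/(y^2)$ with $|y|=2k$. The constant homotopy $a_{2k}\mapsto 1\otimes y\otimes 1$, $b_{2kn-1}\mapsto 0$ has $\psi_0(a_{2k})\neq0$, so that clause needs $\lambda_0\neq0$.
\item The condition $dh_{dt}(b_{2kn-1})\in\mathsf{A}\otimes a_{2k}^m\Lambda W$ does not come from $u_1$ at all. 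Since $d(a_{2k}^i)=0$ and $d(a_{2k}^ib_{2km-1})=a_{2k}^{i+m}$, one automatically has $d(\mathsf{A}\otimes\Lambda W)\subseteq\mathsf{A}\otimes a_{2k}^m\Lambda W$. For $m=n$, degree reasons put $h_{dt}(b_{2kn-1})$ in $\mathsf{A}\otimes\Lambda[a_{2k}]$, hence it is closed. This is the paper's argument.
\item The equation $d\varphi_0(b_{2kn-1})=\lambda_0^n a_{2k}^n$ forces $\varphi_0(b_{2kn-1})=\lambda_0^{n}a_{2k}^{n-m}b_{2km-1}$, not $\lambda_0^{n-m}a_{2k}^{n-m}b_{2km-1}$. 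Your sentence inherits this misprint from the statement; it is harmless for $\lambda_0=1$, which is the case in all later applications.
\end{itemize}
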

We mostly apply this proposition in the case where $\psi_0(a_{2k}) = 0$, so let us  spell out the simplified formula explicitly.
\begin{cor}\label{cor: simplified description of homotopies}
    If, in addition, $\psi_0(a_{2k})=0$ in Proposition \ref{prop: description of homotopies}, then all possible homotopies are of the form   
    \begin{align*}
        H(a_{2k}) ={}& 1 \otimes 1 \otimes \lambda_0 \cdot a_{2k}  + dt \otimes x_{2k-1} \otimes 1, \\
        \begin{split}
            H(b_{2kn-1}) ={}& 1\otimes \bigl( \lambda_0^{n-m} \cdot 1 \otimes a_{2k}^{n-m}b_{2km-1} + \psi_0(b_{2kn-1}) \otimes 1 +  h_0(b_{2kn-1}) \bigr) + \\
            &t \otimes \bigl(n \lambda_0^{n-1}  x_{2k-1} \otimes a_{2k}^{n-1} + dh_{dt}(b_{2kn-1})\bigr) + dt\otimes h_{dt}(b_{2kn-1})
        \end{split}
    \end{align*}
    with $x_{2k-1} \in \mathsf{A} ^{2k-1}$ closed, $h_{dt}(b_{2kn-1}) \in (\mathsf{A} \otimes \Lambda W  )^{2kn-2}$ an element with $dh_{dt}(b_{2kn-1}) \in (\mathsf{A} \otimes a_{2k}^{m}\cdot \Lambda W)^{2k(n-m)-1}$ and $h_0(b_{2kn-1}) \in \mathsf{A}^{\geq 1} \otimes \Lambda^{\geq 1} W$ closed.
\end{cor}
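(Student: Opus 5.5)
This is a direct specialisation of Proposition \ref{prop: description of homotopies}, so the plan is to substitute $\psi_0(a_{2k})=0$ into the general formulas and simplify each term. We may assume the proposition as stated: every homotopy $H$ has the displayed form, with $\lambda_0\in\Q$, $x_{2k-1}$ closed, $h_0(b_{2kn-1})$ a primitive of the stated sum, and $h_{dt}(b_{2kn-1})$ satisfying $dh_{dt}(b_{2kn-1})\in \mathsf{A}\otimes a_{2k}^m\cdot\Lambda W$.

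\textbf{Step 1: the formula for $H(a_{2k})$.} Setting $\psi_0(a_{2k})=0$ removes the summand $1\otimes\psi_0(a_{2k})\otimes 1$. This leaves $1\otimes 1\otimes\lambda_0 a_{2k}+dt\otimes x_{2k-1}\otimes 1$, as claimed.

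\textbf{Step 2: the $t$-coefficient of $H(b_{2kn-1})$.} The term $n\,\psi_0(a_{2k})^{n-1}x_{2k-1}$ vanishes because $n-1\geq 1$. In the sum $\sum_{\alpha=1}^{n-1}\psi_0(a_{2k})^{n-\alpha-1}x_{2k-1}\otimes(\lambda_0a_{2k})^\alpha$, the factor $\psi_0(a_{2k})^{n-\alpha-1}$ vanishes unless its exponent is zero. By the convention $\psi_0(a_{2k})^0=1$, only the summand $\alpha=n-1$ survives, and it contributes $n\lambda_0^{n-1}x_{2k-1}\otimes a_{2k}^{n-1}$. The term $dh_{dt}(b_{2kn-1})$ is unchanged.

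\textbf{Step 3: the constant and $dt$ coefficients.} The constant coefficient keeps its three terms. The condition on $h_0(b_{2kn-1})$ simplifies as follows. It is a primitive of $\sum_{\alpha=m}^{n-1}\binom{n}{\alpha}\psi_0(a_{2k})^{n-\alpha}\otimes(\lambda_0a_{2k})^\alpha$. Here every exponent satisfies $n-\alpha\geq 1$, so each summand vanishes and the sum is $0$; hence $h_0(b_{2kn-1})$ is closed. The constraints on $\psi_0(a_{2k})^r$ become vacuous. The degree bookkeeping for $h_{dt}$ comes from $|H(b_{2kn-1})|=2kn-1$, which forces $|h_{dt}(b_{2kn-1})|=2kn-2$. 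The degree restriction on $dh_{dt}$ is the one inherited from the proposition. The $dt$-coefficient carries over verbatim.

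There is no real obstacle here. The only point needing care is the convention $\psi_0(a_{2k})^0=1$ in Step 2, which is exactly why a single summand of the sum survives rather than all of them vanishing.
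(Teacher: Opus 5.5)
Your proposal matches the paper: it gives no separate proof and obtains the corollary by substituting $\psi_0(a_{2k})=0$ into Proposition~\ref{prop: description of homotopies}, exactly as you do, and your bookkeeping is right (only the $\alpha=n-1$ summand of the $t$-coefficient survives, and $h_0(b_{2kn-1})$, a primitive of a vanishing sum, is closed). One caveat comes from the proposition rather than from your substitution: $\Lambda W^{2kn-1}=\Q\cdot a_{2k}^{n-m}b_{2km-1}$ and $d\varphi_0(b_{2kn-1})=\lambda_0^{n}a_{2k}^{n}$, so the coefficient of $1\otimes a_{2k}^{n-m}b_{2km-1}$ should be $\lambda_0^{n}$ rather than $\lambda_0^{n-m}$, which is harmless for the later uses, where $\lambda_0=1$.
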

\begin{rem}
    Informally speaking, the degree of freedom we have to change a dga-homomorphism $\Phi_0 = \mathrm{ev}_0 \circ H$ within its homotopy class is the coefficients behind the $t$-factor.
    Proposition \ref{prop: description of homotopies} tells us that homotopies are quite rigid, because they can only perturb $\Phi_0$ by an element $x_{2k-1} \in \mathsf{A}^{2k-1}$ and an exact element $dh_{dt}(b_{2kn-1}) \in  ( \mathsf{A} \otimes a_{2k}^m \cdot \Lambda W)^{2k(n-m)-1}$ (which is automatically zero if $n=m$).
\end{rem}
\begin{proof}
    We can uniquely decompose $H(a_{2k})$ and $H({b_{2kn-1}})$ as follows:
    \begin{align*}
         \begin{split}
            H(a_{2k}) ={}& 1 \otimes \bigl( 1 \otimes \varphi_0(a_{2k}) + \psi_0(a_{2k}) \otimes 1 \bigr) + \\
            &t \otimes \bigl( 1 \otimes (\varphi_1 - \varphi_0)(a_{2k}) +  (\psi_1 - \psi_0)(a_{2k}) \otimes 1 \bigr) + dt \otimes x_{2k-1} \otimes 1
        \end{split}  \\
        \begin{split}
            H(b_{2kn-1}) ={}& 1 \otimes \bigl(  1 \otimes \varphi_0(b_{2kn-1})  + \psi_0(b_{2kn-1}) \otimes 1 + h_0(b_{2kn-1}) \bigr) + \\
            & t \otimes \bigl( 1 \otimes (\varphi_1 - \varphi_0)(b_{2kn-1}) + (\psi_1 - \psi_0)(b_{2kn-1}) \otimes 1 + h_t(b_{2kn-1}) \bigr) + \\
            & dt \otimes h_{dt}(b_{2kn-1}),
        \end{split}
    \end{align*}
    for some elements $x_{2k-1} \in \mathsf{A}^{2k-1}$, $h_0(b_{2kn-1}), h_t(b_{2kn-1}) \in \mathsf{A}^{\geq 1} \otimes \Lambda^{\geq 1} W$, and $h_{dt}(b_{2kn-1}) \in (\mathsf{A} \otimes \Lambda W)^{2kn-2}$.
    Conversely, every such assignment determines a unique algebra homomorphism $H \colon \Lambda V \rightarrow I \otimes \mathsf{A} \otimes \Lambda W$ and $H$ is a dga-homomorphism if and only if $H(da_{2k}) = dH(a_{2k})$ as well as $H(db_{2kn-1}) = dH(b_{2kn-1})$ are satisfied.

    The first equation translates into 
    \begin{equation*}
        0 = H(da_{2k}) = dH(a_{2k}) = dt \otimes \bigl( 1 \otimes (\varphi_1 - \varphi_0)(a_{2k}) + (\psi_1 - \psi_0)(a_{2k}) \otimes 1 + dx_{2k-1} \otimes 1\bigr),
    \end{equation*}
    which is equivalent to 
    \begin{equation}\label{eq: homotopy rigidity on a}
        \varphi_0(a_{2k}) = \varphi_1(a_{2k}) = \lambda_0 a_{2k}, \qquad \psi_0(a_{2k}) = \psi_1(a_{2k}), \qquad \text{ and } \qquad  dx_{2k-1} =0. 
    \end{equation}
    for some $\lambda_0 \in \Q$.
    
    The left hand side of the second identity is now given by
    \begin{equation}
       \begin{split}\label{eq: homotopy regidity 2k}
           H(db_{2kn-1}) = H(a_{2k})^n &= \sum_{\alpha=0}^n \binom{n}{\alpha}\cdot 1 \otimes \psi_0(a_{2k})^{n-\alpha} \otimes \varphi_0(a_{2k})^\alpha \\
           &+ n\sum_{\alpha=0}^{n-1} \binom{n-1}{\alpha} dt  \otimes \psi_0(a_{2k})^{n-\alpha-1} x_{2k-1} \otimes \varphi_0(a_{2k})^\alpha,
       \end{split}   
    \end{equation}
    while the right hand side is given by
    \begin{equation*}
        \begin{split}
            dH(b_{2kn-1}) ={} & \  \ \ 1 \otimes \bigl(1 \otimes d\varphi_0(b_{2kn-1}) + d\psi_0(b_{2kn-1}) \otimes 1 + dh_0(b_{2kn-1})\bigr) \\
            & + t \otimes \bigl( 1 \otimes d(\varphi_1 - \varphi_0)(b_{2kn-1})  +  d(\psi_1 - \psi_0)(b_{2kn-1}) \otimes 1 + dh_t(b_{2kn-1}) \bigr) \\
            & + dt \otimes \bigl( 1 \otimes (\varphi_1 - \varphi_0)(b_{2kn-1}) + (\psi_1 - \psi_0)(b_{2kn-1}) \otimes 1 +  h_t(b_{2kn-1}) - dh_{dt}(b_{2kn-1}) \bigr)
        \end{split}
    \end{equation*}
    Degree reasons imply that $h_{dt}(b_{2kn-1}) \in (\mathsf{A} \otimes \Lambda W )^{2nk-2} = (\mathsf{A} \otimes \Lambda[a_{2k}, b_{2km-1}])^{2kn-2}$, so that $dh_{dt}(b_{2kn-1}) \in (\mathsf{A} \otimes a_{2k}^m \cdot \Lambda W)$.
    In particular, if $n=m$, then $(\Lambda W \otimes \mathsf{A})^{2nk-2} = (\Lambda[a_{2k}] \otimes \mathsf{A})^{2kn-2}$ and $h_{dt}(b_{2kn-1})$ must be closed.
    By `comparing coefficients' the derive that the identity $dH(b_{2kn-1}) = Hdb_{2kn-1})$ is satisfied if and only if the following identities hold true: 
    \begin{align}
        \varphi_0(a_{2k})^n ={}& d \varphi_0(b_{2kn-1}), \quad \text{ and } \quad  d \psi_0(b_{2kn-1}) = \psi_0(a_{2k})^n, \label{eq: chain alg requirement}\\
        dh_0(b_{2kn-1}) ={}& \sum_{\alpha=1}^{n-1} \binom{n}{\alpha} \psi_0(a_{2k})^{n-\alpha} \otimes \varphi_0(a_{2k})^\alpha \label{eq: exactness of product}\\
        \varphi_1(b_{2kn-1}) ={}& \varphi_0(b_{2kn-1}) \qquad \text{ and } \qquad \psi_1(b_{2kn-1}) = \psi_0(b_{2kn-1}) + n\cdot \psi_0(a_{2k})^{n-1}x_{2k-1},  \\
        h_t(b_{2kn-1}) ={}& n\sum_{\alpha=1}^{n-1} \binom{n-1}{\alpha}  \psi_0(a_{2k})^{n-\alpha-1} \cdot x_{2k-1} \otimes \varphi_0(a_{2k})^{\alpha} + dh_{dt}(b_{2kn-1}). \label{eq: homotopy rigidity final}
    \end{align}
    Note that the first condition of (\ref{eq: chain alg requirement}) implies that $\varphi_0(b_{2kn-1}) = \varphi_0(a_{2k})^{n-m}b_{2km-1}$.
    Furthermore, observe that (\ref{eq: exactness of product}) is a condition: it forces the right-hand side to be exact, and this holds true if and only if either $\varphi_0(a_{2k}) = 0$ or if $\psi_0(a_{2k})^{n-\alpha} = 0$ for all $\alpha < m$ because $a_{2k}^m = db_{2km-1}$ is an exact element in $\Lambda W$ and $\mathsf{A}$ is equipped with the zero differential.

    Plugging (\ref{eq: homotopy rigidity on a}) and (\ref{eq: chain alg requirement}) - (\ref{eq: homotopy rigidity final}) and the just derived observations into the defining equation of $H(a_{2k})$ and $H(b_{2kn-1})$ yields the claimed formulas.  
\end{proof}
\begin{cor}\label{cor: strict commuativity}
    If $(M,g)$ is a simply connected symmetric space of rank $1$ and $\mathrm{Sym}(M,g)$ its symmetry group that is considered in Example \ref{Exmpl: Minimal model of Lie groups}, then the rational model $\mathsf{M}(\ell)$ of its symmetric action $\ell$ makes the following diagram commutative
    \begin{equation*}
        \xymatrix{ && \Q \otimes \mathsf{M}_{M} \\
        \mathsf{M}_{M} \ar[rr]^-{\mathsf{M}(\ell)} \ar@/^1pc/[rru]^{\id} \ar@/_1pc/[rrd]_{\mathsf{M}(\mathrm{ev}_{\ast})}&& H(\mathrm{Sym}(M,g)) \otimes \mathsf{M}_{M} \ar@{->>}[u]_{\otimes \id}^{\varepsilon} \ar@{->>}[d]^{\varepsilon}_{\id \otimes } \\ 
        && H(\mathrm{Sym}(M,g)) \otimes \Q. }
    \end{equation*}
\end{cor}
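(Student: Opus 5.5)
The idea is to realise both triangles as the images under $\mathsf{M}$ of strictly commuting triangles of spaces. The homotopy classification (Theorem \ref{thm: homotopy classification}) then gives that they commute up to homotopy. The rigidity of homotopies into $H(\mathrm{Sym}(M,g))\otimes\mathsf{M}_M$ and into $H(\mathrm{Sym}(M,g))$ then upgrades homotopy commutativity to strict commutativity.

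On the level of spaces, we have $\ell\circ(e,\id_M)=\id_M$, where $e\colon M\to \mathrm{Sym}(M,g)\times M$ includes $M$ at the unit. We also have $\ell\circ(\id,\ast)=\mathrm{ev}_\ast$, where $\ast$ is the base point. The minimal model of $\mathrm{Sym}(M,g)\times M$ is $H(\mathrm{Sym}(M,g))\otimes\mathsf{M}_M$. Under this identification the inclusion at the unit is modelled by $\varepsilon\otimes\id$, since a point has the augmentation as its model. Likewise the inclusion $\mathrm{Sym}(M,g)\times\{\ast\}$ is modelled by $\id\otimes\varepsilon$. Hence $(\varepsilon\otimes\id)\circ\mathsf{M}(\ell)\simeq\id$ and $(\id\otimes\varepsilon)\circ\mathsf{M}(\ell)\simeq\mathsf{M}(\mathrm{ev}_\ast)$, where $\mathsf{M}(\mathrm{ev}_\ast)$ is given by Lemmas \ref{lem: Sphere projection}, \ref{lem: CP porojection} and \ref{lem: HP and OP projection}.

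It remains to show that both compositions are equal on the nose, not merely homotopic. For the lower triangle the target is $H(\mathrm{Sym}(M,g))$, a cdga with trivial differential generated by odd elements. For projective spaces, write $\mathsf{M}_M=\Lambda[a_{2k},b]$ with $db=a_{2k}^{n+1}$. If $n+1\ge 3$, Lemma \ref{lem: rigid homotopy classes} says every homotopy class has a unique representative, so the two maps agree. If $n+1=2$ (e.g.\ $\CP^1$, $\HP^1$, or even spheres), the lemma still applies provided $a_{2k}\mapsto 0$. This holds for $\mathsf{M}(\mathrm{ev}_\ast)$ by the earlier lemmas. For the composite it holds because $H^{2k}$ of the Lie group restricted from $\mathsf{M}(\ell)(a_{2k})$ is decomposable. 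I would argue this vanishes using the upper-triangle degree analysis below, or alternatively apply Corollary \ref{cor: simplified description of homotopies} directly. Odd spheres are trivial: $\mathsf{M}_{S^n}=\Lambda[a_n]$ and a homotopy $H(a_n)=x+ty+dt\,z$ must have $y=0$ because $da_n=0$.

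For the upper triangle I would use Proposition \ref{prop: description of homotopies} with $\mathsf{A}=\Q$ and $m=n$. Every homotopy $\Lambda V\to I\otimes\Lambda W$ then has $x_{2k-1}=0$, since $\mathsf{A}^{2k-1}=0$. It also has $h_0=0$, and $h_{dt}$ closed, so $dh_{dt}=0$. Thus the $t$-coefficient vanishes and homotopic maps $\mathsf{M}_M\to\mathsf{M}_M$ coincide, giving $(\varepsilon\otimes\id)\circ\mathsf{M}(\ell)=\id$. For spheres, the analogous check is direct: for $\Lambda[a_n,b_{2n-1}]$ the $t$-coefficients are forced to vanish by a short degree count.

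The main obstacle I expect is not these rigidity statements but the bookkeeping of choosing a single representative $\mathsf{M}(\ell)$ for which both triangles hold simultaneously. The two rigidity results make this automatic: any representative satisfies both equalities. So the real work is checking the hypotheses of Lemma \ref{lem: rigid homotopy classes} in the $n+1=2$ cases, namely $\mathsf{M}(\ell)(a_{2k})$ having no pure $H(\mathrm{Sym})\otimes 1$ component. I would settle this by noting that $H^{2k}(\mathrm{Sym}(M,g))$ is decomposable and restricts trivially along the stabiliser inclusion, as in the proof of Lemma \ref{lem: Sphere projection}.
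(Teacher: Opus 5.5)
Your proposal is correct, and for the upper triangle it is the paper's argument: Proposition~\ref{prop: description of homotopies} with $\mathsf{A}=\Q$ and $m=n$ forces $x_{2k-1}=0$ and $dh_{dt}=0$. Hence every map $\mathsf{M}_M\to\mathsf{M}_M$ is the only one in its homotopy class.

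The lower triangle is where you diverge. The paper uses no rigidity there. It notes that $\id\otimes\varepsilon$ is surjective and invokes the homotopy lifting property to replace $\mathsf{M}(\ell)$ by a homotopic representative for which the lower triangle commutes on the nose. The upper triangle still commutes up to homotopy, and rigidity finishes. That route needs no information about $H(\mathrm{Sym}(M,g))$ or $\mathsf{M}(\mathrm{ev}_\ast)$, but it only produces one good representative.

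Your route uses rigidity of homotopy classes $\mathsf{M}_M\to H(\mathrm{Sym}(M,g))$. It therefore gives the stronger statement that every representative of $\mathsf{M}(\ell)$ makes both triangles commute. This is in fact how the corollary is used later, e.g.\ ``a dga-homomorphism $\varphi_0$ modelling $\ell$ must send \dots'' in Proposition~\ref{prop: left action model spheres}. You also treat odd spheres separately. The paper's proof skips them, although $\Lambda[a_n]$ is not of the form $\Lambda[a_{2k},b_{2kn-1}]$.

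Your lower-triangle step can be tightened.
\begin{itemize}
\item Since $H(\mathrm{Sym}(M,g))$ has zero differential, any homotopy has no $t$-term on the cocycle $a_{2k}$. Hence $(\id\otimes\varepsilon)\circ\mathsf{M}(\ell)(a_{2k})=\mathsf{M}(\mathrm{ev}_\ast)(a_{2k})=0$ directly. No decomposability or stabiliser argument is needed.
\item Your alternative suggestion, to get this vanishing from the upper-triangle analysis, would not work, since that concerns the other tensor component.
\item With $x_{2k}=0$, the $t$-coefficient of $H(b)$, a multiple of $x_{2k}^{N-1}z_{2k-1}$ where $db=a_{2k}^N$, vanishes for every $N\ge 2$. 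So your split into $N=2$ and $N\ge 3$ is unnecessary.
\item This is also safer than quoting the $N\ge 3$ clause of Lemma~\ref{lem: rigid homotopy classes} as stated. For a general exterior algebra on odd generators that clause needs $x^{N-1}z=0$, which can fail. For instance, $x=uv+u'v'$ and $z=w$ (suitable odd degrees) with $N=3$ give $x^3=0$ but $x^2w=2uvu'v'w\neq 0$.
\item In your cases the clause holds, because $H^{2k}(\mathrm{Sym}(M,g))=0$ for $\CP^n$, $\HP^n$, $\OP^2$, or because $x_{2k}=0$ as above.
\end{itemize}
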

\begin{proof}
    Since $\ell$ restricts to $\mathrm{ev}_{\ast} = \ell(\placeholder,\ast)$ on $\mathrm{Sym}(M,g) \times \{\ast\}$ and to the identity on $\{1\} \times M$, we know that the diagram in the statement must commute up to homotopy.
    Since the augmentation maps are surjective, they satisfy the the homotopy lifting property \cite[Proposition 5.3]{Bousfield1976PLdeRham}.
    After changing $\mathsf{M}(\ell)$ in its homotopy class if necessary, we assume that the lower triangle commutes strictly.
    
    Since all rank 1-symmetric spaces have a minimal model of the form $\Lambda[a_{2k},b_{2kn-1} \, | \, db_{2kn-1} = a_{2k}^n]$, we can apply Proposition \ref{prop: description of homotopies} with $\mathsf{A} = \Q$ and $n=m$ to deduce that every dga-homomorphism is unique in its homotopy class (because in the notation of Proposition \ref{prop: description of homotopies} $x_{2k-1} = 0$ for degree reasons). 
    Thus, the upper triangle strictly commutes, too.
\end{proof}
We begin with the description of the rational model for left action $\ell \colon \SOrth(n+1) \times S^m \rightarrow S^n$.
\begin{prop}\label{prop: left action model spheres}
    The rational model is given by the homotopy class of the dga-homomorphism
    \begin{align*}
        a_{2k} &\mapsto 1 \otimes a_{2k}, \qquad b_{4k-1} \mapsto 1 \otimes b_{4k-1} + \pi_{4k-1} \otimes 1, & \text{ if } n=2k,\mathcolor{white}{+1} \\
        a_{2k+1} &\mapsto 1 \otimes a_{2k+1} + \varepsilon_{2k+1} \otimes 1, & \text{ if } n=2k+1. \\
    \end{align*}
\end{prop}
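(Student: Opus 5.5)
The plan is to use Corollary \ref{cor: strict commuativity}, applied to $M=S^n$ and $\mathrm{Sym}=\SOrth(n+1)$, to pin down $\mathsf{M}(\ell)$ on generators up to a few undetermined terms, and then remove those terms by degree arguments. Throughout, write $\mathsf{M}(\ell)\colon \mathsf{M}_{S^n}\to H(\SOrth(n+1))\otimes \mathsf{M}_{S^n}$. The corollary lets us choose a representative whose composite with $\varepsilon\otimes\id$ is the identity and whose composite with $\id\otimes\varepsilon$ is $\mathsf{M}(\ev_{e_1})$. The latter is known from Lemma \ref{lem: Sphere projection}.

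\textbf{Odd case, $n=2k+1$.} We have $\mathsf{M}(\ell)(a_{2k+1}) = 1\otimes a_{2k+1} + \varepsilon_{2k+1}\otimes 1 + \sum c_i \otimes a_{2k+1}$, where the $c_i$ lie in $H^{0}$ of positive degree. By degree reasons, any mixed term must have the form $h\otimes a_{2k+1}$ with $\deg h=0$. Since $H$ is connected, that term is a multiple of $1\otimes a_{2k+1}$, and it is already fixed by the upper triangle. So there are no mixed terms, and the two triangles force exactly the stated formula.

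\textbf{Even case, $n=2k$.} First, $\mathsf{M}(\ell)(a_{2k}) = 1\otimes a_{2k} + x\otimes 1$ with $x\in H^{2k}$. The lower triangle combined with Lemma \ref{lem: Sphere projection} gives $x=0$. Note that no mixed term $h\otimes a_{2k}$ with $\deg h=0$ can occur beyond the identity part.

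Next, $\mathsf{M}(\ell)(b_{4k-1}) = 1\otimes b_{4k-1} + \pi_{4k-1}\otimes 1 + y\otimes a_{2k}$ with $y\in H^{2k-1}(\SOrth(2k+1))$. No term involving $a_{2k}b_{4k-1}$ or $a_{2k}^2$ can appear, for degree reasons. Compatibility with the differential reads $d\mathsf{M}(\ell)(b)=\mathsf{M}(\ell)(a)^2 = 1\otimes a_{2k}^2$, and the left side equals $1\otimes a_{2k}^2 + 0$, so this imposes no condition on $y$. The main obstacle is therefore to kill the cross term $y\otimes a_{2k}$.

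For this I will use Proposition \ref{prop: description of homotopies} (with $\mathsf{A}=H(\SOrth(2k+1))$, $n=m=2$), which describes homotopies. It allows $\Phi_0$ to be perturbed, with $\psi_0(a)=0$ and $\lambda_0=1$, by the $t$-coefficient $n\lambda_0^{n-1}x_{2k-1}\otimes a_{2k} = 2x_{2k-1}\otimes a_{2k}$. Here $x_{2k-1}\in\mathsf{A}^{2k-1}$ is arbitrary, since $\mathsf{A}$ has zero differential and every element is closed; also $dh_{dt}=0$ because $n=m$. The endpoint $\ev_1\circ H$ then agrees with $\ev_0\circ H$ except that $y$ is replaced by $y+2x_{2k-1}$.

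Concretely, I will construct the homotopy $H$ from Corollary \ref{cor: simplified description of homotopies} with $h_0(b)=y\otimes a_{2k}$. This is closed and lies in $\mathsf{A}^{\ge1}\otimes\Lambda^{\ge1}W$. I take $x_{2k-1}=-y/2$ and $h_{dt}=0$. This gives a homotopy from the chosen representative to the stated homomorphism, and shows the stated homomorphism lies in the homotopy class $\mathsf{M}(\ell)$.

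The remaining check is that both endpoints are dga maps, which is immediate.
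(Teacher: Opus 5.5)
Your proposal is correct and is essentially the paper's proof. In the even case, Corollary \ref{cor: strict commuativity} together with Lemma \ref{lem: Sphere projection} fixes $\mathsf{M}(\ell)$ up to a cross term $y\otimes a_{2k}$ (the paper's ``$\mathrm{dec}_{2k-1}\otimes a_{2k-1}$'' is a typo for exactly this term), and that term is removed by the homotopy of Corollary \ref{cor: simplified description of homotopies} with $x_{2k-1}=-y/2$ and $h_{dt}=0$; in the odd case, degree reasons and the two triangles already force the formula. The phrase ``$c_i$ lie in $H^0$ of positive degree'' contradicts itself, but this is harmless, since you immediately note that the only possible such term is a scalar multiple of $1\otimes a_{2k+1}$, which the upper triangle fixes.
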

\begin{proof}
   For $n=m=2k$ even, Lemma \ref{lem: Sphere projection} and Corollary \ref{cor: strict commuativity} imply that a dga-homomorphism $\varphi_0$ modelling $\ell$ must send $a_{2k}$ to $1 \otimes a_{2k}$ and $b_{4k-1} \mapsto 1 \otimes b_{4k-1} + \pi_{4k-1}\otimes 1 + \mathrm{dec}_{2k-1} \otimes a_{2k-1}$ with $\mathrm{dec}_{2k-1} \in H^{2k-1}(\SOrth(2k+1))$.
   By applying Corollary \ref{cor: simplified description of homotopies} with $\mathsf{A}= H(\SOrth(2k+1)) $, $x_{2k-1} = -\mathrm{dec}_{2k-1}/2$ and $h_{dt}(b_{4k-1}) =0$ we find a homotopy between $\varphi_0$ and $\varphi_1 = \mathsf{M}(\ell)$ in the claim.

   In the odd case, each rational model $\varphi_0$ for $\ell$ must be of the form $\varphi(a_{2k+1}) = 1 \otimes a_{2k+1} + \varepsilon_{2k+1} \otimes 1 + \mathrm{dec}_{2k+1} \otimes 1$ with $\mathrm{dec}_{2k+1} \in H^{2k+1}(\SOrth(2k+1))$.
   Corollary \ref{cor: strict commuativity} together with Lemma \ref{lem: Sphere projection} implies now that $\mathrm{dec}_{2k+1} \otimes 1 = 0$.
\end{proof}
\begin{rem}\label{rem: reduction to evaluation}
    Since $S^m \rightarrow S^n$ is homotopic to the the constant map if $m<n$, the left action $\mathrm{SO}(n+1) \times S^m \rightarrow S^n$ is homotopic to $\mathrm{ev}_1 \colon \mathrm{SO}(n+1) \rightarrow S^n$, which was discussed in Lemma \ref{lem: Sphere projection}.
\end{rem}
Next we have a look at the left action $\ell \colon \mathrm{Sym}(\mathbb{K}P^n,g_{FS}) \times \mathbb{K}P^m \rightarrow \mathbb{K}P^n$.
\begin{prop}\label{prop: left action model projectives spaces}
    The left action $\ell$ is modelled by the (homotopy class) of the  dga-homomorphisms $\mathsf{M}(\ell)\colon \mathsf{M}_{\mathbb{K}P^n} \rightarrow H(\mathrm{Sym}(\KP^n,g_{FS})) \otimes \mathsf{M}_{\mathbb{K}P^n}$ induced by\footnote{We use the convention that $\xi_7=0$.}
    \begin{align*}
        \mathsf{M}(\ell)(a) &= 1 \otimes a, & \text{for } \mathbb{K} \in \{\C,\Quat,\mathbb{O}\},\\
        \mathsf{M}(\ell)(b_{2n+1}) &= 1 \otimes a_{2}^{n-m}b_{2n+1} + \sum_{\alpha=n-m}^n \gamma_{2\alpha + 1} \otimes a_{2}^{n-\alpha}, & \text{for } \mathbb{K} = \mathbb{C}, \\
        \mathsf{M}(\ell)(b_{4n+3}) &= 1 \otimes a_{4}^{n-m}b_{4n+3} + \sum_{\alpha=n-m}^n q_{4\alpha + 3} \otimes a_{4}^{n-\alpha}, & \text{for } \mathbb{K} = \mathbb{\Quat},\\
        \mathsf{M}(\ell)(b_{8n+7}) &= 1 \otimes a_{8}^{n-m}b_{8n+7} + \sum_{\alpha=n-m}^n \xi_{8\alpha + 7} \otimes a_{8}^{n-\alpha}, & \text{for } \mathbb{K} = \mathbb{\mathbb{O}}.
    \end{align*}
    Moreover, the coefficients are homotopy‑invariant in the sense that they remain unchanged if we replace $\mathsf{M}(\ell)$ by any homotopic representative.
\end{prop}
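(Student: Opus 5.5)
The plan is to first treat the case $m=n$, where the action $\ell\colon \mathrm{Sym}(\KP^n)\times\KP^n\to\KP^n$ is genuinely an action. Afterwards we restrict along the inclusion $\KP^m\hookrightarrow\KP^n$, which Examples \ref{Exmpl: Complex Projective Spaces}--\ref{Exmpl: Cayley Plane} model by $a\mapsto a$ and $b_{(n)}\mapsto a^{n-m}b_{(m)}$. Postcomposing the $m=n$ formula with $\id\otimes\mathsf{M}(\incl)$ then produces the stated formula: the terms $\gamma_{2\alpha+1}\otimes a^{n-\alpha}$ with $n-\alpha>m$ die in $\mathsf{M}_{\KP^m}$ only up to the exact element $a^{m+1}=db$. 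So for the restriction I will instead reapply the rigidity argument below directly with $\Lambda W=\mathsf{M}_{\KP^m}$.

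\textbf{Step 1 (general shape).} By Corollary \ref{cor: strict commuativity}, we may choose $\mathsf{M}(\ell)$ so that $(\varepsilon\otimes\id)\circ\mathsf{M}(\ell)=\id$ and $(\id\otimes\varepsilon)\circ\mathsf{M}(\ell)=\mathsf{M}(\ev)$. The latter is computed in Lemmas \ref{lem: CP porojection} and \ref{lem: HP and OP projection}. Since $H(\mathrm{Sym})$ is an exterior algebra on odd generators, there are no positive even-degree classes of the form $\mathsf{A}^{\geq1}\otimes\Lambda W$ in the right degree. Hence $\mathsf{M}(\ell)(a)=1\otimes a+\sum c_j\otimes a^{j}\cdots$, and degree and odd/even parity force $\mathsf{M}(\ell)(a)=1\otimes a$; a term like $\xi\otimes b$ has the wrong parity. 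For $b$ we write
\[
\mathsf{M}(\ell)(b)=1\otimes b+\sigma_{\mathrm{top}}\otimes 1+\sum_{j=1}^{n-1} c_j\otimes a^{j},
\]
where $\sigma_{\mathrm{top}}$ is the top generator $\gamma_{2n+1}$, $q_{4n+3}$ or $\xi_{23}$, and $c_j\in H^{\mathrm{odd}}(\mathrm{Sym})$. No $b$-terms with positive-degree coefficient can occur for degree reasons. The dga condition $d\mathsf{M}(\ell)(b)=(1\otimes a)^{n+1}$ holds automatically.

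\textbf{Step 2 (normalising via homotopies).} Corollary \ref{cor: simplified description of homotopies} with $m=n$ and $\psi_0(a)=0$ shows that the only freedom is adding $t\otimes n\lambda_0^{n}x_{2k-1}\otimes a^{n}$. This touches only the coefficient of $a^{n}$, which is the top power and hence lies in degree $2k-1$. Therefore $c_1,\dots,c_{n-1}$ are homotopy invariants, which gives the ``moreover'' part. We then normalise $c_n$; its degree $2k-1$ part may be killed or identified with $\gamma_1$ (respectively zero for $\Quat$, $\mathbb{O}$).

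\textbf{Step 3 (identifying the coefficients).} This is the main obstacle: we must show $c_j$ equals the primitive generator $\gamma_{2(n-j)+1}$ (or $q$, $\xi$) with nonzero coefficient and with no decomposable part. I would proceed by induction on $n$, as in Lemma \ref{lem: CP porojection}, using the compatibility of $\ell$ with $\U(n)\times\KP^{n-1}\to\KP^{n-1}$ together with the uniqueness coming from Proposition \ref{prop: description of homotopies}. This shows that the restriction of $\mathsf{M}(\ell)$ to $H(\U(n))\otimes\mathsf{M}_{\KP^{n-1}}$ equals the model at level $n-1$, which kills decomposables and fixes the lower coefficients. Nonvanishing of the new coefficient would come from Theorem \ref{Thm: Mapping Spaces via derivations} and Lemma \ref{lem: h-groups mapping spaces}: the adjoint $\mathrm{Sym}\to\hAut$ must be surjective on rational $\pi_*$. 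I would obtain this from the fibration $\mathrm{Stab}\to\mathrm{Sym}\to\KP^n$ compared with the evaluation fibration of $\hAut$, or else simply rescale generators after showing each $c_j$ is indecomposable.

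For $\OP^2$ no induction over $\mathrm{F}$-groups is available. Instead I would use the inclusion $\Spin(9)\to\Ffour$, under which $\xi_{23}\mapsto0$, and the $\Spin(9)$-action on $\OP^1=S^8$ from Proposition \ref{prop: left action model spheres}. This gives the coefficient $\xi_{15}$ of $a_8$ via $\pi_{15}$, while the coefficient of $a_8^2$ is zero by the convention $\xi_7=0$.
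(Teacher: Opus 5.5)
Your proposal is correct and follows essentially the paper's route: fix the shape of $\mathsf{M}(\ell)$ via Corollary~\ref{cor: strict commuativity} and the evaluation-map lemmas, use Corollary~\ref{cor: simplified description of homotopies} to see that only the degree-$(2k-1)$ coefficient of $a^n$ can be moved, determine the remaining coefficients by induction from the homotopy-commutative comparison with the action on $\mathbb{K}P^{n-1}$ (for $\mathbb{O}P^2$, with $\mathrm{Spin}(9)$ acting on $\mathbb{O}P^1=S^8$ as in Proposition~\ref{prop: left action model spheres}), and then pass to $\mathbb{K}P^m$ by a homotopy whose $dt$-part removes the terms with exact powers $a^{j}$, $j>m$. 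The only superfluous part is your nonvanishing worry in Step 3: at each level the new top coefficient is already $\mathsf{M}(\mathrm{ev}_{[e_0]})(b)$ by Step 1, and the lower coefficients are the unique lifts of indecomposables along restriction maps that are injective in those degrees, so you do not need rational surjectivity of $\mathrm{Sym}\to\mathrm{hAut}$, which the paper derives from this proposition rather than using as an input.
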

\begin{proof}
    We spell out the proof for $\HP^n$ only as the proofs for the other cases only differ in notation.
    We will prove the statement by induction over $n$ for the special case $n=m$ and then deduce the more general case $m\leq n$.

    We begin with $n=1$.
    For degree reasons and Lemma \ref{lem: CP porojection}, the dga-homomorphism $\mathsf{M}(\ell)$ must be of the form $a_4 \mapsto 1 \otimes a_4$ and $b_7 \mapsto 1 \otimes b_{7} + q_7 \otimes 1 + \mu\cdot q_3 \otimes a_4$ for some $\mu \in \Q$.

    Since $\mathrm{deg}(q_3) = \mathrm{deg}(a_4)-1$, we may choose $x_{3} = -\mu/2 \cdot  q_3 \otimes 1$ and $h_{dt} = 0$ in Proposition \ref{prop: description of homotopies} to change $\mathsf{M}(\ell)$ in its homotopy classes to set $\mu=0$, which is not possible for the other coefficients. 

    For $n>1$, we argue inductively.
    As for $\HP^1$, we know that $\mathsf{M}(\ell)$ must have the form
    \begin{equation*}
        a_{4n} \mapsto 1 \otimes a_{4n} \qquad \text{ and } \qquad b_{4n+3} \mapsto 1 \otimes b_{4n+3} + \sum_{\alpha = 0}^n {}_n\zeta_\alpha \otimes a_{4}^{n-\alpha},
    \end{equation*}
    with ${}_n\zeta_\alpha \in H^{4\alpha + 3}(\mathrm{Sym}(\HP^n))$.
    By Corollary \ref{cor: simplified description of homotopies}, we see that all coefficients except ${}_n\zeta_0$ are homotopy invariant. 
    As in the special case $n=1$, we can set ${}_n\zeta_0=0$ if we wish.

    The top coefficient ${}_n\zeta_n \in H^{4n+3}(\mathrm{Sym}(\HP^n))$ can be determined using Corollary \ref{cor: strict commuativity} and Lemma
    Lemma \ref{lem: HP and OP projection}: It satisfies ${}_n\zeta_n = q_{4n+3}$.
    
    To determine the coefficients, we consider the following diagram that commutes up to homotopy
    \begin{equation*}
        \xymatrix@R-.7em{ \mathsf{M}_{\HP^n} \ar[rr] \ar[d]_{\mathsf{M}(\incl)} && H(\mathrm{Sym}(\HP^n,g_{FS})) \otimes \mathsf{M}_{\HP^n} \ar[d]_{H(\iota) \otimes}^{\mathsf{M}(\incl)} \\
        \mathsf{M}_{\HP^{n-1}} \ar[rr] && H(\mathrm{Sym}(\HP^{n-1},g_{FS})) \otimes \mathsf{M}_{\HP^{n-1}}. }
    \end{equation*}
    Applying Corollary \ref{cor: simplified description of homotopies} with the cdgas $\Lambda W = \mathsf{M}_{\HP^{n-1}}$ and $\mathsf{A} = H(\mathrm{Sym}(\HP^{n-1},g_{FS}))$ and $\psi_0(a_4)=0$, we see that coefficients ${}_{n-1}\zeta_\alpha$ remain homotopy-invariant except for ${}_{n-1}\zeta_0$.
    From $\mathsf{M}(\ell) \circ \mathsf{M}(\incl) \simeq H(\iota) \otimes \mathsf{M}(\incl) \circ \mathsf{M}(\ell)$ and the homotopy invariance of the coefficients ${}_n\zeta_\alpha$ and ${}_{n-1}\zeta_\alpha$ for ($\alpha \neq 0$) we deduce that
    \begin{equation*}
        \bigl(H(\iota) \otimes \mathsf{M}(\incl)\bigr) \circ \mathsf{M}(\ell)(b_{4n+3}) = 1 \otimes a_4b_{4n-1} + \sum_{\alpha=0}^{n-1} {}_n\zeta_\alpha \otimes a_4^{n-\alpha}
    \end{equation*}
    must agree (up to a summand of the form ${}_n\zeta_0 \otimes a_4^{n-1} = \mu \cdot q_3 \otimes a_4^{n-1}$) with
    \begin{equation*}
        \bigl(H(\iota) \otimes \mathsf{M}(\incl)\bigr) \circ \mathsf{M}(\ell)(b_{4n+3}) = 1 \otimes a_4b_{4n-1} + \sum_{\alpha=0}^{n-1} {}_{n-1}\zeta_\alpha \otimes a_4^{n-\alpha}.
    \end{equation*}
    By comparing coefficients we derive the recursive equation ${}_n \zeta_{\alpha} = {}_{n-1} \zeta_{\alpha}$ which determines all coefficients ${}_n\zeta_\alpha$ with $0 < \alpha <n$ to be ${}_n\zeta_\alpha = q_{4\alpha + 3}$, which proves the statement for the case $m=n$.

    \vspace{6pt}

    \noindent To obtain the general case, we use the homotopy‑commutative diagram
    \begin{equation*}
        \xymatrix@R-.7em{ \mathsf{M}_{\HP^n} \ar[rr]^-{\mathsf{M}(\ell)} \ar[rrd]_-{\mathsf{M}(\ell)} && H(\mathrm{Sym}(\HP^n,g_{FS})) \otimes \mathsf{M}_{\HP^n} \ar[d]_{\id \otimes}^{\mathsf{M}(\incl)} \\
        && H(\mathrm{Sym}(\HP^{n},g_{FS})) \otimes \mathsf{M}_{\HP^{m}}. }
    \end{equation*}
    In this case, $\mathsf{M}(\ell) \colon \mathsf{M}_{\HP^n} \rightarrow H(\mathrm{Sym}(\HP^n)) \otimes \mathsf{M}_{\HP^m}$ is induced by the assignment
    \begin{equation*}
        a_{4} \mapsto a_4 \quad \text{ and } \quad b_{4n+3} \mapsto 1 \otimes a_4^{n-m}b_{4n+3}+\sum_{\alpha = 0}^n q_{4\alpha +3} \otimes a_4^{n-\alpha}.
    \end{equation*}
    However, since $a_4^{n-\alpha}$ is exact for $\alpha<m$ we can choose the homotopy $H \colon \mathsf{M}_{\HP^n} \rightarrow \Lambda[t,dt] \otimes H(\mathrm{Sym}(\HP^n,g_{FS}) \otimes \mathsf{M}_{\HP^m}$ that is given by
    \begin{align*}
        H(a_4) ={}& 1\otimes 1\otimes a_4 \\
        \begin{split}
            H(b_{4n+3}) ={}& 1 \otimes \Bigl(1\otimes a_{4}^{n-m}b_{4m+3} + \sum_{\alpha=0}^n q_{4\alpha + 3} \otimes a_{4}^{n-\alpha} \Bigr) \\
            {}& - t \otimes \sum_{\alpha=0}^{n-(m+1)} q_{4\alpha + 3} \otimes a_{4}^{n-\alpha}  + dt\otimes \sum_{\alpha=0}^{n-(m+1)} q_{4\alpha + 3} \otimes a_{4}^{n-\alpha-(m+1)}b_{4m+3}, 
        \end{split}
    \end{align*}
    to obtain the claimed formula.
\end{proof}
%
We finally in the position to prove the main results of this article.
\begin{theorem}\label{thm: general main result rhg projective spaces}
    For all $1 \leq m \leq n$, the inclusions $s \colon \mathrm{Sym}(\KP^n,g_{FS})/\mathrm{Stab}(\KP^m) \rightarrow \mathrm{C}(\KP^m,\KP^n)$ induced by the left action induce isomorphisms on rational homotopy groups in the following degrees: 
    \begin{align*}
        &\pi_k(s) \colon \pi_k(\U(n+1)/(\U(1)\times \U(n-m))_\Q \xrightarrow{\cong} \pi_k(\mathrm{C}(\CP^m,\CP^n),\incl)_\Q & \text{for all } k > 0, \\
        &\pi_k(s) \colon \pi_k(\Sp(n+1)/(\Z_2\times \Sp(n-m))_\Q \xrightarrow{\cong} \pi_k(\mathrm{C}(\HP^m,\HP^n),\incl)_\Q & \text{for all } k>3, \\
        &\pi_k(s) \colon \pi_k(\Ffour)_\Q \xrightarrow{\cong} \pi_k(\hAut(\OP^2),\id)_\Q & \text{for all } k>11, \\
        &\pi_k(s) \colon \pi_k(\Ffour/\Spin(7))_\Q \xrightarrow{\cong} \pi_k(\mathrm{C}(\OP^1,\OP^2),\incl)_\Q & \text{for all } k>0.
    \end{align*}
    In the remaining cases, the target groups are zero and the kernels of the homomorphisms are given by $\pi_3(\mathrm{Sp}(n+1)/(\Z_2 \times \Sp(n-m)))_\Q \cong \Q$ and $\pi_3(\Ffour)_\Q \cong \pi_{11}(\Ffour)_\Q \cong \Q$.
\end{theorem}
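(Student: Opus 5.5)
The plan is to compute $\pi_k(s)$ entirely inside Sullivan models, using the model $\mathsf{M}(\ell)$ of the action from Proposition \ref{prop: left action model projectives spaces} and the derivation description of Theorem \ref{Thm: Mapping Spaces via derivations}. The adjoint of $\ell$ restricted to $\mathrm{Sym}(\KP^n)\times\KP^m$ is $s$ precomposed with the quotient map $q\colon \mathrm{Sym}\to\mathrm{Sym}/\mathrm{Stab}(\KP^m)$. Under the Buijs–Murillo identification, a rational homotopy class $\theta\in\pi_k(\mathrm{Sym})_\Q\cong\mathrm{Hom}(V^k,\Q)$ is sent by $\pi_k(s\circ q)$ to the homology class of the $\mathsf{M}(\incl)$-derivation
\begin{equation*}
  D_\theta=(\theta\otimes\id)\circ\mathsf{M}(\ell)\colon \mathsf{M}_{\KP^n}\to\mathsf{M}_{\KP^m}.
\end{equation*}
Here $\theta$ is extended to $H(\mathrm{Sym})$ by killing decomposables, so it only reads off the coefficient of the indecomposable generator. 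This is the standard "derivation obtained by differentiating a family" description. I would justify it by viewing $S^k\to\mathrm{Sym}$ with model $H(\mathrm{Sym})\to\Lambda[\sigma_k]/\sigma_k^2$ and composing with $\mathsf{M}(\ell)$. The result is a map $\mathsf{M}_{\KP^n}\to \Lambda[\sigma_k]/(\sigma_k^2)\otimes\mathsf{M}_{\KP^m}$, and its $\sigma_k$-component is exactly a cycle in $\Der_k^{\mathsf{M}(\incl)}$ representing the adjoint.

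Next I plug in the formula for $\mathsf{M}(\ell)$. The derivation $D_\theta$ vanishes on $a$. When $\theta$ is dual to the generator $q_{4\alpha+3}$ (resp.\ $\gamma_{2\alpha+1}$, $\xi_{8\alpha+7}$) with $n-m\le\alpha\le n$, it sends $b$ to $a^{n-\alpha}$. That is, $D_\theta=a^{n-\alpha}\otimes b^\vee$, up to sign. For all other generators of $H(\mathrm{Sym})$, $D_\theta=0$. By the proof of Lemma \ref{lem: h-groups mapping spaces}, the classes $a^{\beta}\otimes b^\vee$ with $0\le\beta\le m$ form a basis of the homology in the odd degrees $2k(n-\beta)+2k-1$. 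Hence these generators map bijectively onto that part of the homology, and the remaining generators lie in the kernel of $\pi_*(s\circ q)$.

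I then pass from $\mathrm{Sym}$ to the quotient. The fibration $\mathrm{Stab}\to\mathrm{Sym}\to\mathrm{Sym}/\mathrm{Stab}$ is rationally a split surjection on homotopy in the relevant degrees, so $\pi_*(q)_\Q$ is surjective. Its kernel is spanned by the generators restricting nontrivially to $\mathrm{Stab}$. Using the known restriction maps from Example \ref{Exmpl: Minimal model of Lie groups}, these are exactly $q_3,\dots,q_{4(n-m)-1}$ for $\Sp(n-m)$, resp.\ $\xi_3,\xi_{11}$ for $\Spin(7)$ with $\xi_{15}$ not in the kernel, and the $\U$-analogue. So the generators killed by $D$ are precisely those in $\ker\pi_*(q)$, up to the listed exceptions. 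The exceptions are the $\Z_2\times\Sp$ case, where $q_3$ survives in the quotient but gives a zero derivation, and $\Ffour$ with $m=n$, where $\xi_3$ and $\xi_{11}$ die. These produce the stated kernels.

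The even-degree class $1\otimes a^\vee+(n+1)a^{n-m}b'\otimes b^\vee$ of degree $2k$ for $m<n$ is not hit by $D_\theta$ for indecomposable generators. It must come from the even-degree rational homotopy of the homogeneous space, namely $\pi_{2k}$ of $\Sp(n+1)/\Sp(n-m)$ or of $\Ffour/\Spin(7)$. I expect this to be the main obstacle. It requires a model of $\mathrm{Sym}/\mathrm{Stab}$, or a connecting-homomorphism argument, rather than just $H(\mathrm{Sym})$. My first attempt would be to model the quotient by the relative Sullivan model $\Lambda(s\bar V_{\mathrm{Stab}})\otimes H(\mathrm{Sym})$ with $d(sy)$ equal to the restricted generator. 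I would then check, via the long exact sequence and a dimension count against Lemma \ref{lem: h-groups mapping spaces}, that the boundary class in $\pi_{2k}$ maps to the even class. Since both sides have equal total dimension in each degree, injectivity suffices. Finally, I would read off Theorem \ref{Main Theorem - Rational Homotopy Groups Cayley} from the $\OP$ rows.
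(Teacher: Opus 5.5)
\noindent For the odd-degree classes your route differs from the paper's. The paper composes $\mathsf{M}(\ell)$ with $\mathsf{M}(f)\otimes\mathrm{id}$ for a generator $f$ of $\pi_{8k-1}(\Ffour)_\Q$. It then uses Proposition~\ref{prop: description of homotopies} to see that the result is not homotopic to $\mathsf{M}(\incl)$, and finishes with a dimension count. You instead use naturality of the derivation model to identify the image of each generator as the class of $a^{n-\alpha}\otimes b^\vee$. This gives image and kernel at once and needs no counting. That part is sound, with one correction. For $m=n$ the class $a^n\otimes b^\vee=-\tfrac{1}{n+1}\delta(1\otimes a^\vee)$ is a boundary, so the odd basis is $0\le\beta\le m-1$. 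This, rather than vanishing of the derivation, is why $q_3$ and $\gamma_1$ die.

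The genuine gap is the even class in degree $2k$ for $m<n$. It is $1\otimes a^\vee+(n+1)a^{n-m-1}b'\otimes b^\vee$; the exponent in the lemma is off by one. You leave this class open. The paper's proof does not treat it either: it only checks images of generators of $\pi_*(\mathrm{Sym})$, all of odd degree, and then appeals to ``dimension reasons''.

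Your proposed fix cannot work for $\HP$, because your equal-dimension assertion is false there. Since $\pi_3(\Sp(n-m))_\Q\to\pi_3(\Sp(n+1))_\Q$ is an isomorphism for $n>m$, rationally both $\pi_3$ and $\pi_4$ of $\Sp(n+1)/(\Z_2\times\Sp(n-m))$ vanish. On the other hand $\pi_4(\mathrm{C}(\HP^m,\HP^n),\incl)_\Q\cong\Q$: the even class is a cycle, and there are no nonzero derivations of degree $5$. So for $m<n$ the $\HP$ row fails at $k=4$, and the asserted $\pi_3$-kernel is actually zero.

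The same obstruction applies to $\OP^1\subset\OP^2$. An element of $\Ffour$ preserving $\OP^1$ preserves its span $\mathrm{Herm}(2,\Oct)$, hence fixes $\diag(0,0,1)$. So the setwise stabiliser is a $\Spin(9)$, acting on $\mathrm{Herm}(2,\Oct)$ through $\Spin(9)\to\SOrth(9)$. The pointwise stabiliser is therefore $\Z_2$, not $\Spin(7)$. The source then has no rational $\pi_8$, and $\pi_3,\pi_{11}$ land in the kernel.

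Where the claim does hold for $m<n$, namely for $\CP$, the efficient tool is the evaluation map $\mathrm{ev}\colon\mathrm{C}(\KP^m,\KP^n)\to\KP^n$, not a relative model of the quotient. On the derivation model, $\mathrm{ev}$ is postcomposition with the augmentation of $\mathsf{M}_{\KP^m}$, so it sends the even class to $a^\vee\neq 0$. Meanwhile $\mathrm{ev}\circ s$ is the projection $\U(n+1)/(\U(1)\times\U(n-m))\to\CP^n$, whose fibre $\U(n)/\U(n-m)$ is $2$-connected. Hence $\pi_2(\mathrm{ev}\circ s)_\Q$ is an isomorphism and the even class is hit. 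The same computation for $\HP$ confirms the failure: there the fibre $(\Sp(1)\times\Sp(n))/(\Z_2\times\Sp(n-m))$ has rational $\pi_3\cong\Q$, which absorbs $\pi_4(\HP^n)_\Q$.
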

\begin{proof}
    From Example \ref{Exmpl: Minimal model of Lie groups} and Lemma \ref{lem: h-groups mapping spaces}, we see that rational homotopy groups of the left and the right hand side are (abstractly) isomorphic in the claimed degrees and that the target groups are zero otherwise.
    The kernel can now easily be read of from Example \ref{Exmpl: Minimal model of Lie groups}.
    It remains to show that the inclusions a.k.a the adjoint of the symmetry group actions induce these isomorphisms.

    We spell the proof in detail for the fourth case, that is for the map $\Ffour/\Spin(7) \rightarrow \mathrm{C}(\OP^1,\OP^2)$, but we do in a way that allows a straightforward adaptation to the other cases.
    For $k = 2,3$, we have $\pi_{8k-1}(\Ffour)_\Q \cong \Q$, and by Theorem \ref{thm: homotopy classification} we see that a generator $f_{8k-1}$ can be modelled by the dga-homomorphism $\mathsf{M}(f_{8k-1}) =  a_{8k-1} \otimes \xi^\vee_{8k-1} \colon H(\Ffour) \rightarrow \mathsf{M}_{S^{8k-1}} = \Lambda[a_{8k-1}]$ that sends $\xi_{8k-1}$ to $a_{8k-1}$ and the other generators to zero.
    
    By Theorem \ref{thm: localisation of mapping spaces}, composition with the rationalisation-map $\mathsf{loc}_\Q \colon \OP^2 \rightarrow \OP^2_\Q$ induces a rationalisation $\mathrm{C}(\OP^1,\OP^2;\incl) \rightarrow \mathrm{C}(\OP^1,\OP^2_\Q;\mathsf{loc}_\Q\circ \incl)$.
    By the exponential law and the universal property of a $\Q$-localisation, each composition
    \begin{equation*}
        \xymatrix{ S^{8k-1} \ar[r]^{f} & \Ffour \ar@{^{(}->}[r]^-s & \mathrm{C}(\OP^1,\OP^2;\incl) \ar[r]^-{\mathsf{loc}_\Q\circ(\placeholder)} & \mathrm{C}(\OP^1,\OP^2_\Q;\mathsf{loc}_\Q \circ \incl)  }
    \end{equation*}
    corresponds to the following composition
    \begin{equation*}
        \xymatrix{ S^{8k-1}_\Q \times \OP^1_\Q \ar[rr]^{f_{\Q} \times \incl_\Q} && \mathrm{F}_{4,\Q} \times \OP^1_\Q \ar[rr]^{\ell_\Q} && \OP^2_\Q. }
    \end{equation*}
    Its homotopy class is modelled by the homotopy class of the composition of dga-homomorphism
    \begin{equation*}
        \xymatrix{  \mathsf{M}_{\OP^2} \ar[rr]^-{\mathsf{M}(\ell)} &&  H(\Ffour) \otimes \mathsf{M}_{\OP^1} \ar[rr]^-{\mathsf{M}(f) \otimes \id} && \mathsf{M}_{S^{8k-1}} \otimes \mathsf{M}_{\OP^1}. }
    \end{equation*}
    
    From Proposition \ref{prop: left action model projectives spaces} we deduce that, under this identification, the composition $\mathsf{loc}_\Q \circ s \circ f_{8k-1}$ is represented by the dga-homomorphism determined by $a_8 \mapsto 1 \otimes a_8$ and $b_{23} \mapsto 1 \otimes a_8b_{15} + a_{8k-1} \otimes a_8^{3-k}$, while the composition with the constant map $S^{8k-1} \xrightarrow{\mathrm{const}_1} \Ffour \rightarrow \mathrm{C}(\OP^1,\OP^2_\Q;\mathsf{loc}_\Q\circ \incl)$ is represented by the homotopy class of the dga-homomorphism given by $a_{8} \mapsto 1 \otimes a_8$ and $b_{23} \mapsto 1 \otimes a_8b_{15}$, the rational model of the inclusion $\incl \colon \OP^1 \hookrightarrow \OP^2$.
    For $k=2,3$, the elements $a_8^{3-k}$ are not exact in $\mathsf{M}_{\OP^1}$, and we deduce from Proposition \ref{prop: description of homotopies} that the two dga-homomorphisms are \emph{not} homotopic.

    We conclude that $s \colon \Ffour \rightarrow \mathrm{C}(\OP^1,\OP^2)$ induces a non-zero map between their rational homotopy groups in all degrees where the target is non-zero; by dimension reasons the map is therefore an isomorphism.
    Since $s$ factors through $\Ffour/\mathrm{Stab}(\OP^1) = \Ffour/\Spin(7)$, which has isomorphic rational homotopy groups to $\Ffour$ in these degrees, the induced map remains an isomorphism.
\end{proof}
Next we discuss the homotopy automorphisms of spheres.
\begin{theorem}\label{thm: general main result rhg different spheres}
    For $1 \leq m < n$ the map $s \colon \SOrth(n+1) \rightarrow \mathrm{C}(S^m,S^n)$ is rationally homotopic to the composition $\SOrth(n+1) \xrightarrow{\mathrm{ev}_1} S^n \hookrightarrow \mathrm{C}(S^m,S^n)$.
    In particular, the induced homomorphism $\pi_r(s)$ on rational homotopy groups is non-zero unless $r=n$, in which case the image is one-dimensional.
\end{theorem}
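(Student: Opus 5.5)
The plan is to prove the homotopy statement directly, and in fact integrally. Rationality will only be needed to read off the effect on homotopy groups. By the exponential law, the map $s\colon \SOrth(n+1)\to \mathrm{C}(S^m,S^n)$ is adjoint to the composite
\begin{equation*}
  \SOrth(n+1)\times S^m \xrightarrow{\id\times \incl} \SOrth(n+1)\times S^n \xrightarrow{\ell} S^n .
\end{equation*}
Since $m<n$, cellular approximation shows that $\incl\colon S^m\hookrightarrow S^n$ is nullhomotopic. So choose a homotopy $h_t\colon S^m\to S^n$ with $h_0=\incl$ and $h_1=\mathrm{const}_{e_1}$. The formula $(g,x,t)\mapsto \ell(g,h_t(x)) = g\cdot h_t(x)$ then defines a homotopy from $\ell\circ(\id\times\incl)$ to $(g,x)\mapsto g\cdot e_1=\ev_{e_1}(g)$. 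This is precisely the observation in Remark \ref{rem: reduction to evaluation}.

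Passing back through the exponential law, which is compatible with homotopies because $S^m\times[0,1]$ is compact Hausdorff, we obtain a homotopy in $\mathrm{C}(\SOrth(n+1),\mathrm{C}(S^m,S^n))$. It connects $s$ to the composite $c\circ \ev_{e_1}$, where $c\colon S^n\to \mathrm{C}(S^m,S^n)$, $y\mapsto \mathrm{const}_y$, is the inclusion of constant maps. One has to check that all of this lives in the path component of $\incl$; this holds because $\incl$ and $\mathrm{const}_{e_1}$ lie in the same component, via $h_t$. Hence $s\simeq c\circ\ev_{e_1}$, and in particular the two maps are rationally homotopic. If one prefers to phrase this in the language of the paper, one can instead compose the model of Proposition \ref{prop: left action model spheres} with $\id\otimes\mathsf{M}(\incl)$, where $\mathsf{M}(\incl)$ is homotopic to the augmentation since $a_n\mapsto 0$ and $b_{2n-1}\mapsto 0$ for degree reasons. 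This produces exactly the model of $\ev_{e_1}$ from Lemma \ref{lem: Sphere projection}.

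For the consequence on rational homotopy groups, I would use that $c$ is a section of the evaluation fibration $\ev_{e_1}\colon \mathrm{C}(S^m,S^n)\to S^n$ discussed before Lemma \ref{lem: h-groups mapping spaces spheres}. Therefore $\pi_r(c)_\Q$ is injective, and the image of $\pi_r(s)_\Q$ is isomorphic to the image of $\pi_r(\ev_{e_1})_\Q\colon \pi_r(\SOrth(n+1))_\Q\to\pi_r(S^n)_\Q$. Using Theorem \ref{thm: homotopy classification}, this image can be read off from the linear part of $\mathsf{M}(\ev_{e_1})$ in Lemma \ref{lem: Sphere projection}:
\begin{itemize}
  \item For $n$ odd, $a_n\mapsto\varepsilon_n$, so the image is one-dimensional in degree $r=n$ and zero otherwise.
  \item For $n=2k$ even, $a_n\mapsto 0$ and $b_{2n-1}\mapsto\pi_{4k-1}$, so the image is one-dimensional in degree $2n-1$ and vanishes in all other degrees, including $r=n$.
\end{itemize}
I will state the final conclusion in this precise form, so that it matches what the computation actually yields.

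The main obstacle is bookkeeping rather than mathematics. One point is the continuity and adjunction of the homotopy $g\cdot h_t(x)$. The other is keeping track of path components and basepoints, so that the identification of homotopy groups of $\mathrm{C}(S^m,S^n)$ at $\incl$ with those at $\mathrm{const}_{e_1}$ is legitimate. This identification follows since changing the basepoint along the path $h_t$ induces an isomorphism compatible with the above homotopy. Finally, the precise degree statement must be extracted carefully from Lemma \ref{lem: Sphere projection}, separately for $n$ even and $n$ odd.
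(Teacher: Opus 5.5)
Your proof is correct, and its core is the same as the paper's: the null-homotopy of $S^m\hookrightarrow S^n$ (Remark~\ref{rem: reduction to evaluation}), followed by reading off linear parts from Lemma~\ref{lem: Sphere projection}. The difference is where the homotopy is produced.

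\textbf{The paper's route.} It first translates $s_\Q$, via the exponential law and Theorem~\ref{thm: homotopy classification}, into a class in $[\mathsf{M}_{S^n},H(\SOrth(n+1))\otimes\mathsf{M}_{S^m}]$. It then identifies this class with that of $\mathrm{ev}_{e_1,\Q}$. This phrasing buys uniformity with the proofs of Theorems~\ref{thm: general main result rhg projective spaces} and~\ref{thm: general main result rhg spheres}.

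\textbf{Your route.} You write down the homotopy $g\cdot h_t(x)$ directly. This gives a genuine integral homotopy $s\simeq c\circ\mathrm{ev}_{e_1}$ and makes the algebraic detour unnecessary for the first assertion. It also makes explicit two steps the paper leaves implicit:
\begin{itemize}
\item the change of basepoint along $h_t$;
\item the injectivity of $\pi_r(c)_\Q$, which comes from the section of the evaluation fibration (available from the discussion before Lemma~\ref{lem: h-groups mapping spaces spheres}).
\end{itemize}
Without the second step, ``the statement now follows from Lemma~\ref{lem: Sphere projection}'' does not yet control the image of $\pi_r(s)$.

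\textbf{The restated conclusion.} You are right to restate the final claim. The printed ``in particular'' clause cannot hold literally: for example, $\pi_2(\SOrth(n+1))_\Q=0$ forces $\pi_2(s)=0$. Moreover, for $n$ even the one-dimensional image sits in degree $2n-1$, not $n$, because then $\pi_n(\SOrth(n+1))_\Q=0$. Your version is what both arguments actually prove: image of dimension one in degree $n$ for $n$ odd, in degree $2n-1$ for $n$ even, and zero otherwise.

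\textbf{A small inaccuracy in your optional algebraic aside.} $\mathsf{M}(\incl)$ is not forced to be the augmentation for degree reasons. For $m=2$, $n=4$, the assignment $a_4\mapsto\lambda a_2^2$, $b_7\mapsto\lambda^2a_2^2b_3$ is a dga-homomorphism $\mathsf{M}_{S^4}\to\mathsf{M}_{S^2}$ for every $\lambda\in\Q$. It is, however, homotopic to the augmentation because $\incl$ is null-homotopic, and that is all the aside needs.
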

\begin{proof}
     Using the exponential law together with Theorem \ref{thm: homotopy classification}, we have the bijections of homotopy classes 
    \begin{equation*}
        [ \mathrm{SO}(n+1)_\Q,\mathrm{C}(S^m,S^n)_\Q ] = [ \mathrm{SO}(n+1)_\Q \times S^m_\Q, S^n_\Q ] = [ \mathsf{M}_{S^n}, H(\mathrm{SO}(n+1)) \otimes \mathsf{M}_{S^m} ]
    \end{equation*}
    under which the rationalisation $s_\Q \colon \mathrm{SO}(n+1) \rightarrow \mathrm{C}(S^m, S^n)_\Q$ corresponds to the rationalisation of the left action $\mathsf{\ell}_\Q \colon \mathrm{SO}(n+1)_\Q \times S^m_\Q \rightarrow S^n_\Q$ which is homotopic to the rationalisation evaluation map $\mathrm{ev}_{e_1,\Q}$, see also Remark \ref{rem: reduction to evaluation}.
    The statement now follows from Lemma \ref{lem: Sphere projection}.
\end{proof}
 In remains to compare the symmetry group $\SOrth(n+1)$ to the homotopy automorphisms $\hAut(S^n)$.
\begin{theorem}\label{thm: general main result rhg spheres}
   For all $1 \leq n$, the homomorphism $\pi_r(s) \colon \pi_r(\SOrth(n+1))_\Q \rightarrow \pi_r(\hAut(S^n))_\Q$ induced by canonical inclusion $s \colon \SOrth(n+1) \rightarrow \hAut(S^n)$ is the zero map unless $r=n$, in which case it is an isomorphism.
\end{theorem}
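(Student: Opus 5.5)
The plan is to argue exactly as in Theorem \ref{thm: general main result rhg projective spaces}: first compare source and target abstractly, then use Proposition \ref{prop: left action model spheres} to decide what happens in the degrees where both sides are non-zero. By Example \ref{Exmpl: Minimal model of Lie groups}, $\pi_\ast(\SOrth(n+1))_\Q$ is concentrated in the degrees $3,7,\dots$ (the $\pi_j$), together with degree $n$ (the class $\varepsilon_n$) when $n$ is odd. For the target, Theorem \ref{Thm: Mapping Spaces via derivations} with $X=Y=S^n$ and $f=\id$ reduces everything to the homology of $\Der^{\id}(\mathsf{M}_{S^n},\mathsf{M}_{S^n})$. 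For $n$ odd the only positive-degree derivation is $1\otimes a_n^\vee$, of degree $n$, so the target is $\Q$ in degree $n$ and zero otherwise. For $n$ even, Lemma \ref{lem: h-groups mapping spaces spheres}(iii) says the target is $\Q$ in degree $2n-1$, generated by $1\otimes b_{2n-1}^\vee$, and zero otherwise.

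For $n=2k+1$ odd the argument should go through. Take the generator $f\colon S^n\to\SOrth(n+1)$ dual to $\varepsilon_n$. Composing the model of Proposition \ref{prop: left action model spheres} with $\mathsf{M}(f)\otimes\id$ gives $a_n\mapsto 1\otimes a_n + a_n\otimes 1$, whereas the constant map gives $a_n\mapsto 1\otimes a_n$. The target $\mathsf{M}_{S^n}\otimes\mathsf{M}_{S^n}$ has zero differential, and a homotopy $H$ can only add $d$-exact terms to $H(a_n)$, which all vanish. So the two homomorphisms are not homotopic, and $\pi_n(s)$ is non-zero between one-dimensional spaces, hence an isomorphism. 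In every other degree the target vanishes, so $\pi_r(s)=0$ there. The same reasoning handles $r=n$ for $n$ even, where both groups are zero because $n$ is even and all generators are odd.

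The whole difficulty sits in $n=2k$ even and $r=2n-1=4k-1$, where both groups are $\Q$ and the theorem, as worded, asserts $\pi_{4k-1}(s)=0$. The plan here is again to compose the model of Proposition \ref{prop: left action model spheres} with the generator $\mathsf{M}(f)=a_{4k-1}\otimes\pi_{4k-1}^\vee$. This yields $b_{4k-1}\mapsto 1\otimes b_{4k-1}+a_{4k-1}\otimes 1$, and one would try to show it is homotopic to $b_{4k-1}\mapsto 1\otimes b_{4k-1}$ using Corollary \ref{cor: simplified description of homotopies} with $\mathsf{A}=\Lambda[a_{4k-1}]$ and $m=n=2$.

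I expect this step to fail, and I would check it before anything else. Corollary \ref{cor: simplified description of homotopies} only allows perturbations through a closed $x_{2k-1}\in\mathsf{A}^{2k-1}=0$ and a term $dh_{dt}$ that vanishes when $m=n$. So the extra term $a_{4k-1}\otimes 1$ cannot be removed. Independently, the evaluation fibration $\hAut(S^n)\to S^n$ has $\mathrm{ev}\circ s=\mathrm{ev}_{e_1}$, and $\mathrm{ev}_{e_1}$ is non-zero on $\pi_{4k-1}(\;)_\Q$ by Lemma \ref{lem: Sphere projection}(i). Both computations point to $\pi_{2n-1}(s)$ being an isomorphism for $n$ even, contradicting the literal statement.

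I would therefore first recheck that degree-$(2n-1)$ computation, in particular the claim in Lemma \ref{lem: h-groups mapping spaces spheres}(iii) and the basepoint conventions for $\hAut(S^n)$. Only if some convention kills the class $1\otimes b_{2n-1}^\vee$ would the stated vanishing for even $n$ be provable along these lines. Otherwise the even case has to be stated with an isomorphism in degree $2n-1$ (trivially also in degree $n$, where both groups are zero), and the odd case is the only one that matches the statement as worded.
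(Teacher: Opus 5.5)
\textbf{Verdict.} Your even-case objection is correct, but your odd case has a gap: the degree-$n$ map is not an isomorphism when $n\equiv 3\pmod 4$.

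\textbf{Even $n$.} Here you and the paper follow the same route. The paper's own proof establishes exactly what you say. For $n$ even it composes the model of Proposition \ref{prop: left action model spheres} with $\mathsf{M}(f)$, for $f$ dual to $\pi_{2n-1}$, and obtains $b_{2n-1}\mapsto 1\otimes b_{2n-1}+\mathsf{M}(f)(\pi_{2n-1})\otimes 1$. It then concludes this is \emph{not} homotopic to $1\otimes\mathrm{id}$, i.e.\ $\pi_{2n-1}(s)\neq 0$. Both groups are $\Q$ in degree $2n-1$, so this map is an isomorphism. Hence ``zero unless $r=n$'' is a misstatement for even $n$; degree $n$ is trivially fine, since both sides vanish there. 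Your rigidity argument via Corollary \ref{cor: simplified description of homotopies} ($\mathsf{A}^{2k-1}=0$, and $dh_{dt}=0$ for $m=n$) is the paper's argument. Your check via $\mathrm{ev}\circ s=\mathrm{ev}_{e_1}$ and Lemma \ref{lem: Sphere projection}(i) is a quicker independent confirmation.

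\textbf{Odd $n$: the gap.} The step ``$\pi_n(s)$ is non-zero between one-dimensional spaces, hence an isomorphism'' fails for $n\equiv 3\pmod 4$.
\begin{itemize}
\item Write $n=2k+1$. Then $H(\mathrm{SO}(2k+2))=\Lambda[\pi_3,\dots,\pi_{4k-1},\varepsilon_{2k+1}]$. When $k$ is odd, the generator $\pi_{2k+1}$ also lies in degree $n$, so $\pi_n(\mathrm{SO}(n+1))_\Q\cong\Q^2$. For example, $\pi_3(\mathrm{SO}(4))_\Q\cong\Q^2$, while $\pi_3(\mathrm{hAut}(S^3))_\Q\cong\Q$.
\item For $f$ dual to $\pi_n$, your own composite is $a_n\mapsto 1\otimes a_n$, which is the same as for the constant map. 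So this class lies in the kernel.
\item Topologically, the kernel is the image of $\pi_n(\mathrm{SO}(n))_\Q$. The subgroup $\mathrm{SO}(n)$ acts by based maps, so it lands in $\Omega^n_1 S^n$, which is rationally trivial for $n$ odd.
\item Non-vanishing on $\varepsilon_n^\vee$ therefore gives only surjectivity.
\end{itemize}
The paper's proof has the same gap, since it too tests only $\varepsilon_n^\vee$.

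\textbf{What the method actually proves.}
\begin{itemize}
\item For $n$ odd: $\pi_r(s)=0$ for $r\neq n$, and $\pi_n(s)$ is surjective, bijective exactly when $n\equiv 1\pmod 4$ (including $n=1$).
\item For $n$ even: $\pi_r(s)=0$ for $r\neq 2n-1$, and $\pi_{2n-1}(s)$ is an isomorphism.
\end{itemize}
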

\begin{proof}
    As in the proof of Theorem \ref{thm: general main result rhg projective spaces}, whether or not the map $S^r \rightarrow \SOrth(n+1) \xrightarrow{s} \hAut(S^n)$ represents the zero element in $\pi_r(\hAut(S^n))_\Q$ can be read off from the homotopy class of the dga-homomorphism
    \begin{equation*}
        \xymatrix{ \mathsf{M}_{S^n}  \ar[rr]^-{\mathsf{M}(\ell)} && H(\SOrth(n+1)) \otimes \mathsf{M}_{S^n} \ar[rr]^-{\mathsf{M}(f) \otimes \id} && \mathsf{M}_{S^r} \otimes \mathsf{M}_{S^n}.}
    \end{equation*}
    
    If $n$ is odd, then by Proposition \ref{prop: left action model spheres} the composition is the dga-homomorphism determined by $a_n \mapsto 1 \otimes a_n + \mathsf{M}(f)(\varepsilon_{n}) \otimes 1$, which is not homotopic to the identity if $[f] \in \pi_n(\SOrth(n+1))_\Q$ represents the element $\varepsilon_n^\vee$.

    If $n$ is even, then by Proposition \ref{prop: left action model spheres} the composition is the dga-homomorphism determined by $a_n \mapsto 1 \otimes a_n$ and $b_{2n-1} \mapsto 1 \otimes b_{2n-1} + \mathsf{M}(f)(\pi_{2n-1}) \otimes 1$, which is not homotopic to the identity if $[f] \in \pi_{2n-1}(\SOrth(n+1))_\Q$ has the linear map $\pi_{2n-1}^\vee$ as rational model.
\end{proof}





\printbibliography

\end{document}

T.~Hertl acknowledges support by the Australian Research Council Discovery Project DP DP220102163.